\documentclass[12pt]{article}

%\renewcommand{\baselinestretch}{1.2}
%\addtolength{\parskip}{\baselineskip}

\usepackage{geometry}
\geometry{a4paper,left=1.2in,right=1.2in,top=1in,bottom=2in}

\usepackage{amsmath}
\usepackage{amssymb}
\usepackage{amsthm}
\usepackage[latin1]{inputenc}
\usepackage{eurosym}
\usepackage[dvips]{graphics}
\usepackage{graphicx}
\usepackage{epsfig}
\usepackage{hyperref}

\usepackage{ifthen}

\makeindex

\newcommand{\supp}{\operatorname{supp}}

\newcommand{\Rr}{{\mathbb{R}}}

\newcommand{\Nn}{{\mathbb{N}}}

\newcommand{\Tt}{{\mathbb{T}}}

\newcommand{\Hh}{{\overline{H}}}

\newcommand{\Gg}{{\mathcal{G}}}

\newcommand{\ep}{{\epsilon}}

\newcommand{\Rm}{{\noindent \textbf {Remark:} \ }}

\theoremstyle{definition}

\newtheorem{theorem}{Theorem} \newtheorem{corollary}{Corollary}
\newtheorem{lemma}{Lemma} \newtheorem{proposition}{Proposition}
\newtheorem{definition}{Definition}

\theoremstyle{definition}

\newcommand{\nat}{I\!\!N} 
\newcommand{\z}{\mathbb{Z}} \newcommand{\re}{\mathbb{R}}
\newcommand{\tn}{\mathbb{T}^N}
\newcommand{\rn}{\mathbb{R}^N}
\newcommand{\zn}{\mathbb{Z}^N}

\begin{document}

%\begin{document}

\title{The Mather measure and a Large Deviation Principle for
the Entropy Penalized Method }
\author{D. A.  Gomes, A. O. Lopes and J. Mohr}
\date{\today}
\maketitle

\thanks{A. O. Lopes was partially supported by CNPq, PRONEX -- Sistemas
Din\^amicos, Instituto do Mil\^enio, and is beneficiary of
CAPES financial support,
J. Mohr was partially supported by CNPq PhD scholarship,
D. Gomes was
partially supported by the Center for Mathematical
Analysis, Geometry and Dynamical Systems through FCT Program POCTI/FEDER
and by grant POCI/FEDER/MAT/55745/2004.
}

\begin{abstract}

We present the rate function and a large deviation principle for the
entropy penalized Mather problem when the Lagrangian is generic (it is known that in this case the
Mather measure $\mu$ is unique and the support of $\mu$ is the Aubry set).
We assume the Lagrangian
$L(x,v)$, with $x$ in the torus $\mathbb{T}^N$ and $v\in \Rr^n$, satisfies certain natural hypothesis, such as superlinearity and convexity in $v$, as well
as some technical estimates.
Consider,
for each value of $\epsilon $ and $h$, the
%operators acting on continuous functions $\phi$:
%$$\Gg[\phi](x):= -\epsilon h \mbox{ ln}\left[
%  \int_{\re^N} e ^{-\frac{hL(x,v)+\phi(x+hv)}{\epsilon h}}dv\right] ,$$
%and
%                     $$\bar \Gg[\phi](x):= -\epsilon h \mbox{ ln}\left[
%  \int_{\re^N} e ^{-\frac{hL(x+hv,-v)+\phi(x+hv)}{\epsilon
%      h}}dv\right].$$
%
%Denote by $\phi_{\epsilon,h} $ the solution of
%$\Gg[\phi_{\epsilon,h}]=\phi_{\epsilon,h}+\lambda_{\epsilon,h}$,
%and
%by $\bar \phi_{\epsilon,h} $ the solution of
%$\bar \Gg[\phi_{\epsilon,h}]=\bar \phi_{\epsilon,h}+\lambda_{\epsilon,h}$.
%Let
%$ \theta_{\epsilon,h}(x)= e^{-\, \frac{\bar  \phi_{\epsilon,h}(x)+ \phi_{\epsilon,h}(x)
%}{\epsilon \, h }} $. By adding a suitable constant to
%$\phi_{\epsilon,h}$ or
%$\bar\phi_{\epsilon,h}$, we can assume that $\theta_{\epsilon,h}(x)$ is  a
%probability density on $\Tt^n$.
%From D. Gomes and E. Valdinoci, it is known that
% the probability measure
%on $\Tt^n\times \Rr^n$
%
%$$\mu_{\epsilon,h}(x,v)=
%\theta_{\epsilon,h}(x) \,  e^{-\frac{hL(x,v)+\phi_{\epsilon,h}(x+hv)-\phi_{\epsilon,h}(x)-\lambda_{\epsilon,h}}{\epsilon h}},$$
%minimizes the entropy penalized Mather problem:
entropy penalized Mather problem
\[\min \{ \int_{\tn\times\rn}
   L(x,v)d\mu(x,v)+\epsilon S[\mu]\},\]
 where the entropy $S$ is given by
$S[\mu]=\int_{\tn\times\rn}
\mu(x,v)\ln\frac{\mu(x,v)}{\int_{\rn}\mu(x,w)dw}dxdv,
$
and the minimization is performed over the space of probability
densities $\mu(x,v)$ on $\mathbb{T}^N\times\mathbb{R}^N$ that satisfy the
discrete holonomy constraint
$
\int_{\Tt^n\times \Rr^n}
\varphi(x+hv) -\varphi(x) d\mu=0
$.
It is known \cite{GV} that there exists a unique
minimizing measure
$\mu_{\epsilon, h}$ which converges to a Mather measure $\mu$, as $\epsilon, h\to 0$. In the
case in which the Mather measure $\mu$ is unique we prove a Large Deviation
Principle for the limit $\lim_{\epsilon,h\rightarrow0}\,\, \epsilon\,
\ln
    \mu_{\epsilon,h}(A),$ where $A\subset \mathbb{T}^N\times\mathbb{R}^N$.
In particular, we prove that
the deviation function $I$ can be written as  $I(x,v)=
L(x,v)+\nabla\phi_0(x)(v)-\overline{H}_{0},$ where $\phi_0$ is the
unique viscosity solution of the Hamilton-Jacobi equation, $H(\nabla\phi(x),x)=-\Hh_0 $.
We also prove a large deviation principle for the limit $\epsilon\to 0$ with fixed $h$.

Finally,  in the last section, we study some dynamical
properties of the discrete time Aubry-Mather problem, and
present a proof of the existence of a separating subaction.
\end{abstract}

\section{Introduction}

Recently, several results concerning large deviations as well as asymptotic limits
for Mather measures have appeared in the literature (see, for instance
\cite{A1},
\cite{A2}, \cite{AIPS}, \cite{BLT}). In this paper
we will consider a related
setting: the entropy penalized method introduced in
\cite{GV}. We study the rate of convergence of the entropy penalized Mather measures by establishing several large deviations results.

Let $\mathcal M$ denote the set of probability measures on $\tn\times\rn$.

The Mather problem  (see \cite{Mat},  \cite{Man}, \cite{CI} and    \cite{Fa}) consists in determining
probability measures $\mu\in \mathcal M$, called Mather measures,
which minimize the action
 \begin{equation} \label{Ma} \int_{\tn\times\rn} L(x,v)\,d\mu(x,v),
 \end{equation}
 among the probabilities $\mu \in \mathcal M$ that are invariant by the Euler-Lagrange flow for $L$. The Mather  measures usually are not absolutely continuous with respect to Lebesgue measure and are supported in sets which are not attractors for the flow. In this way, given $L$, it is important to have computable methods that permit, in some way,  to show the approximate location of the support of these measures.

 For $h>0$ fixed,
in analogy with the continuous case,  define the set of discrete holonomic measures as
\begin{equation}\label{constraint}
\mathcal M_h:=\left\{\mu\in\mathcal M;   \int_{\Tt^N\times \Rr^N}
\varphi(x+hv) -\varphi(x) d\mu=0, \forall \varphi\in C(\tn) \right\}.
\end{equation}
 Any measure $\mu\in\mathcal M_h$ is called
a discrete holonomic  measure. We denote by $\mathcal M_h^{a.c.}$ the measures in $\mathcal M_h$ which admit a density.

The discrete time
Aubry-Mather problem, see \cite{Gom},  consists in determining
probability measures $\mu\in \mathcal M_h$
  that minimize the action
 \begin{equation} \label{action} \int_{\tn\times\rn} L(x,v)\,d\mu(x,v).
 \end{equation}
Motivated by the papers \cite{A1}, \cite{A2},
the entropy penalized method was introduced
in \cite{GV} in order to approximate Mather measures by smooth densities.
The entropy penalized Mather problem, for $\ep>0$ and  $h>0$ fixed, consists in
\[
\min_{\mathcal M_h^{a.c.}} \left\{ \int_{\tn\times\rn}
   L(x,v)\,d\mu(x,v)+\epsilon S[\mu]\right\},
\]
 where the entropy $S$ is given by
\[
S[\mu]=\int_{\tn\times\rn}
\mu(x,v)\ln\frac{\mu(x,v)}{\int_{\rn}\mu(x,w)dw}dxdv,
\]
The entropy penalized method can be seen as a procedure to
approximate Mather measures by absolutely continuous probability
measures. These measures can be
obtained as a fixed point of an operator ${\cal G}$,  to be described later, from a
discrete time process with small parameters $\epsilon, h$. Furthermore, this fixed point can be
obtained by means of iteration of the operator
${\cal G}$.
In \cite{GL1} it is shown that, for $\epsilon$ and $h$ fixed,  the
velocity of convergence to the fixed point is exponentially fast.

In this paper we assume that the Lagrangian $L$ is such that the Mather measure is unique.
Then, it follows from a result by D. Gomes and E. Valdinoci \cite{GV} that
$\mu_{\epsilon, h}$ (the solution of the entropy penalized problem)
converges to
a discrete  Mather measure $\mu_h$, i.e., a measure that minimize \eqref{action} over $\mathcal M_h$.
Furthermore, by
 a result of D. Gomes (see \cite{Gom} and \cite{CDG}), with the Lagrangian satisfying some hypothesis to be stated  in the next section,
 the sequence of measures $\mu_h$ converges, through a subsequence, to the Mather measure $\mu$. Hence $\mu_{\epsilon, h}$ converges, through a subsequence, to  $\mu$.

 We address here the question of estimating how good is this approximation. In this way, it is natural to consider a Large Deviation Principle (L.D.P. for short)  for such limit. We refer the reader  to \cite{DZ} for general properties of  large deviation theory.

We start in the next section
by describing briefly the entropy penalized
Mather measure problem, as well as stating some of the results, such as the
uniform
semiconcavity estimates, that we will need throughout the paper. We
refer the reader to \cite{CS} for general results concerning semiconcavity.
In this section we also
generalize a result by D. Gomes and E. Valdinoci which shows the existence, for each $\ep$ and $h$,  of
 a density of  probability $\mu_{\epsilon,h}$
on $\Tt^N\times \Rr^N$
which solves the entropy penalized Mather problem.
This generalization is essencial for the large deviation results later in the paper.

In the two next sections we consider Large Deviation Principles in the following three  forms:

Firstly, for $h$ fixed,  as $\mu_{\epsilon,h}\rightharpoonup \mu_{h}$, we show the existence of a rate function $I_h$  such that,

 (a) If $A\subset \mathbb{T}^N \times\rn$ is a closed (resp. open) and bounded set, then
  $$\label{dinf1}\lim_{\epsilon\rightarrow0}\epsilon\ln
    \mu_{\epsilon,h}(A)\leq
    -\inf_A I_h (x,v) \;\; (\mbox{ resp. } \geq)
  $$

In order to do prove this result, we also need to study
some dynamical properties
of the discrete time Aubry-Mather problem, namely, the uniqueness of the calibrated subaction
for the discrete time problem. Because of its independent interest, we present these results in a separate section in the end of the paper.

\smallskip

For our second large deviation result,
we assume   that the Mather measure is unique and  the support of this measure is the Aubry set, hence there exists only one viscosity solution, say $\phi_0$.
Then, as $\mu_{\epsilon,h}\rightharpoonup \mu$,

(b) If $A\subset \mathbb{T}^N \times\rn$ is a  closed (resp. open) and bounded set such that $\pi_1(A)\cap \mathcal A\neq \emptyset$, where  $\mathcal A$ is the projected Aubry set, then there exists a function
$I(x,v)$  such that
  $$\label{dinf1}\lim_{\epsilon,h\rightarrow0}\epsilon \,  \ln
    \mu_{\epsilon,h}(A)\leq
    -\inf_A I (x,v)\;\; (\mbox{ resp. } \geq)
  $$
In this  case  we show that the deviation function $I$ is given by
$$I(x,v)= L(x,v)+\nabla\phi_0(x)(v)-\overline{H}_{0},$$where $\Hh_0$ is the Ma\~n\'e's critical value.
%where $\phi_0$ is the unique viscosity solution.

\smallskip

We point out that we just consider $I(x,v)$ for the points $x$ where $\nabla\phi_0(x)$ is defined. For the others points $x$ we declare $I(x,v)=\infty$. We remark that  $\mu_{\epsilon,h}$ is absolutely continuous with respect to Lebesgue measure on the tangent bundle  $\tn\times\rn$, and, as $ \nabla\phi_0$ is Lipchitz, $ \nabla\phi_0$ is differentiable almost every where
in the compact manifold $\tn$ where lives the $x$ variable. In this way, all points $(x,v)$ we consider in the support of
$\mu_{\epsilon,h}$ are assumed to be such that $ \nabla\phi_0(x)$ is defined.
\smallskip

Finally, the last case is:

(c) If $A\subset \mathbb{T}^N \times\rn$ is a  closed (open) and bounded set such that $\pi_1(A)\cap \mathcal A= \emptyset$ we will show a l.d.p. which yields
an estimate for the convergence rate of
  $$\label{dinf1}\lim_{\epsilon,h\rightarrow0}\epsilon\,h  \,  \ln
    \mu_{\epsilon,h}(A).
$$

In the last section we study the discrete  Aubry-Mather problem under the point of view of subactions, i.e., continuous functions that satisfy \begin{equation} \label{subaction}u(x)-u(x+hv)\leq h(L(x,v) -\Hh_h) \;\;\;\;\forall\; (x,v)\in\tn\times\rn\end{equation} for each  $h>0$ fixed. Where  $\Hh_h$ is the analog of the Mañé's critical value, i.e.,
 $$\Hh_h=\min_{\mathcal M_h} \left\{ \int_{\tn\times\rn}
   L(x,v)d\mu(x,v)\right\}$$
  There exist two important classes of subactions in which we are interested. The first class is composed of the calibrated subactions, those such that
  $$ u(x)= \inf_{v \in \mathbb{R}^N } \, \{ u(x+h\, v) + h\, L(x,v) -  h\, \Hh_h \}.
$$
The second class of subactions consists in the separating subactions, that is,
those for which the equality in (\ref{subaction}) is  attainned for some $(x,v)$ if, and  only if   $x\in\Omega(L)$ (this set will be defined in the last section).

Under the hypothesis that the Lagrangian is generic, we will show that there exists only one calibrated subaction, which gives  the uniqueness of the deviation function $I_h$.
Furthermore, we will establish the existence of a separating subaction, which can be considered as discrete  analog of the main result of   \cite {FS}.

By the way, we point out that according to \cite{FS} we can add to the Lagrangian $L(x,v)$ a term $d \varphi$, where $\varphi$ is differentiable $C^2$, in such way that the Mather measures for  $\hat{L}= L +d\varphi$ are the same as for $L$, $\overline{H}_{0} $ is the same, etc..., and moreover
$$\hat{I}(x,v)= \hat{L}(x,v)+\nabla\phi_0(x)(v)-\overline{H}_{0}=0,$$
if an only if, $(x,v)$ is in the support of the Mather measure.

The last author would like to thanks Philippe Thieullen for interesting conversations on the subject of the paper.

\section{The entropy penalized Mather problem}
\label{epmp}

%\begin{equation} \label{entropy} \int_{\tn\times\rn}
  % L(x,v)d\mu(x,v)+\epsilon S[\mu].
 %\end{equation}
In \cite{GV}, the Lagrangian $L:\rn\times\rn\to \re$  has the form
$$L(x,v)=K(v)-U(x), \;\;\;\;\; \mbox{ for } v\in\rn, x\in\rn,$$
in which $K$ is strictly  convex in $v$ and superlinear at infinity, and
the potential energy $U$ is bounded, $\z^N$-periodic and semiconvex, that is, there exists
$C_U>0$ such that
 $$\inf_{x,y\in\rn,y\neq 0} \frac{U(x+y)+U(x-y)-2U(x)}{|y|^2}\geq -C_U.$$
 Furthermore, $K$ is semiconcave, i.e., that there exists
 $C_K$ such that
 $$\sup_{v,w\in\rn,w\neq 0} \frac{K(v+w)+K(v-w)-2K(v)}{|w|^2}\leq C_K.$$

In this work we will need to work in slightly generalized setting. The main reason is that even if the Lagrangian has the form $L(x,v)=K(v)-U(x)$,
the time-reversed Lagrangian $L(x+h v, -v)$ will not have this form in general. The time-reversed Lagrangian, however, arises naturally in our problems.
Therefore need to modify our hypothesis accordingly.

\medskip

We will assume in the whole paper  that the Lagrangian $L:\rn\times\rn\to\re$,  is $\z^N$-periodic (we can consider it as a function $L:\tn\times\rn\to\re$),
 and satisfies the following estimates:

 \begin{enumerate}
\item Uniform superlinearity:   $$\lim_{|v|\to \infty}\frac{L(x,v)}{|v|}=+\infty,\;\;\;\; \mbox{ uniformly on }x\in\tn.$$

\item Convexity in $v$: the Hessian matrix   $\frac{\partial^2L}{\partial v_i \partial
v_j}(x,v)$ is positive definite.

\item There exist uniform constants  $C,\Gamma>0$ such that
$$ L( x+ y, v - z) + L( x-  y, v + z) -
2\, L(x,v)
\leq C \,  |y|^2 \, +\,
\Gamma  |z|^2$$
\end{enumerate}

 We consider here, the optimal control setting, where
\[
H(p, x)=\sup_v ( -p\cdot v-L(x, v)).
\]

\bigskip

 \Rm In the Classical Mechanics setting,   we usually define the
Hamiltonian in a different way, that is
\[
H(p, x)=\sup_v (\, p\cdot v-L(x, v)).
\]
These two definitions differ  by the sign of  $p\cdot v$.  And they are related in the following way:
if, instead of $L(x,v)$, we begin with the symmetrical Lagrangian, i.e., $\check{L}(x,v)=L(x,-v)$ (see \cite{Fa} § 4.5), then
$$\check{H}(p,x)=\max_v \{p\cdot v - \check{L}(x,v)\}=\max_v \{-p\cdot v - L(x,v)\}$$
Therefore,
the results presented here also hold, of course, in the
Classical Mechanics setting of Aubry-Mather theory.

\bigskip

Consider,
for each value of $\epsilon $ and $h$, the following
operators acting on continuous functions $\phi:\tn\rightarrow\re$:
$$\Gg[\phi](x):= -\epsilon h \mbox{ ln}\left[
  \int_{\re^N} e ^{-\frac{hL(x,v)+\phi(x+hv)}{\epsilon h}}dv\right] ,$$
and
                     $$\bar \Gg[\phi](x):= -\epsilon h \mbox{ ln}\left[
  \int_{\re^N} e ^{-\frac{hL(x-hv,v)+\phi(x-hv)}{\epsilon
      h}}dv\right].$$

We point out that the $\epsilon$ in \cite{GV} correspond here to $\epsilon h.$

\Rm Let  $\bar L$ be the Lagrangian  given by $\bar L(x,v)=L(x+hv,-v)$, we have that $\bar \Gg $ is the operator $\Gg$ for the Lagrangian $\bar L$. Hence, it is enough to prove the properties we need  for $\Gg$.

\begin {theorem}\label{pfixo}
Suppose $L$ satisfies  assumptions (1) to (3) above.
Then
for $\epsilon$ and $h$ fixed there exist
  $\lambda_{\epsilon,h}\in\mathbb{R}$ and $\z^N$-periodic Lipschitz
  functions $\phi_{\epsilon,h}$, $\bar \phi_{\epsilon,h}$
  so that
\begin{equation}\label{G}\Gg[\phi_{\epsilon,h}]=\phi_{\epsilon,h}+\lambda_{\epsilon,h},
\end{equation}
and
\begin{equation}\label{barG}\bar \Gg[\bar \phi_{\epsilon,h}]=\bar \phi_{\epsilon,h}+\lambda_{\epsilon,h}.
\end{equation}
Also there exists a constant $\bar C$ such that  the semiconcavity modulus of
$\phi_{\epsilon,h}$ and $\bar \phi_{\epsilon,h}$ is bounded by $\bar C$ for all $\epsilon$ and all $h$ sufficiently small.
% by a
%constant depending only on $N,C_U$ and $C_K$.
\end{theorem}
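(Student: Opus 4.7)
The plan has three ingredients: the existence of $\phi_{\epsilon,h}$ as an additive eigenfunction of $\Gg$ with eigenvalue $\lambda_{\epsilon,h}$; the parallel existence of $\bar\phi_{\epsilon,h}$ from the identification of $\bar\Gg$ as $\Gg$ applied to the time-reversed Lagrangian $\bar L(x,v)=L(x+hv,-v)$; and the uniform semiconcavity bound, which I would derive by iterating $\Gg$ a number of times that grows like $1/h$ before applying assumption (3).

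For existence, the operator $\Gg$ is order-preserving and commutes with addition of constants, $\Gg[\phi+c]=\Gg[\phi]+c$. Superlinearity (1) makes the defining integral converge for any bounded continuous periodic $\phi$, and (2) gives $C^{1}$ regularity in $x$. This puts us in the standard Perron--Frobenius setting for Lax--Oleinik-type operators: one shows $\Gg$ is non-expansive in sup-norm modulo constants, extracts the ergodic constant $\lambda_{\epsilon,h}:=\lim_{n} \Gg^{n}[0](x_{0})/n$, and applies Arzelà--Ascoli to the normalized iterates $\Gg^{n}[0]-\Gg^{n}[0](x_{0})$, whose uniform Lipschitz bound follows from differentiating under the integral and the concentration of the integrand near the $v$-minimizer. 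The Remark identifies $\bar\Gg$ as $\Gg$ for $\bar L(x,v)=L(x+hv,-v)$; using $\mathbb{Z}^{N}$-periodicity one checks that $\bar L$ still satisfies (1)--(3), so the same construction produces $\bar\phi_{\epsilon,h}$. That the two eigenvalues coincide follows from a duality pairing against discrete holonomic measures, both equaling (up to sign) the minimum value of the entropy penalized action.

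The substantive new content is the uniform semiconcavity modulus. Using the iterated fixed-point relation $\phi_{\epsilon,h}=\Gg^{n}[\phi_{\epsilon,h}]-n\lambda_{\epsilon,h}$ for an integer $n$ to be chosen depending on $h$, one has
\[
\Gg^{n}[\phi](x)=-\epsilon h\ln\int\!\!\cdots\!\!\int e^{-\bigl(\sum_{i=1}^{n} hL(x_{i-1},v_{i})+\phi(x_{n})\bigr)/(\epsilon h)}\,dv_{1}\cdots dv_{n},
\]
with $x_{0}=x$ and $x_{i}=x_{i-1}+hv_{i}$. In the integrals defining $\Gg^{n}[\phi_{\epsilon,h}](x\pm y)$, I would substitute $v_{i}=w_{i}\mp y/(nh)$, so that the terminal $\phi$-argument becomes the common value $x+h\sum_{j\leq n} w_{j}$; the arguments of $L$ at step $i$ then become $\bigl(x_{i}^{\mathrm{base}}\pm(n-i+1)y/n,\,w_{i}\mp y/(nh)\bigr)$. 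Assumption (3) bounds the sum of the two perturbed $L$-values by $2L(x_{i}^{\mathrm{base}},w_{i})+C((n-i+1)/n)^{2}|y|^{2}+\Gamma|y|^{2}/(nh)^{2}$, and the Cauchy--Schwarz inequality $\int f\cdot\int g\geq\bigl(\int\sqrt{fg}\,\bigr)^{2}$ applied to the two perturbed integrands converts this exponent-level estimate, after summing over $i$ and cancelling $n\lambda_{\epsilon,h}$, into
\[
\phi_{\epsilon,h}(x+y)+\phi_{\epsilon,h}(x-y)-2\phi_{\epsilon,h}(x)\leq \tfrac{1}{3}C\,nh\,|y|^{2}+\tfrac{\Gamma}{nh}|y|^{2}.
\]
Choosing $n=\lfloor\sqrt{3\Gamma/C}/h\rfloor$, a positive integer for all sufficiently small $h$, optimizes the right-hand side and yields the uniform semiconcavity modulus $\bar C=2\sqrt{C\Gamma/3}$, independent of $\epsilon$ and $h$; the identical argument applied to $\bar L$ controls $\bar\phi_{\epsilon,h}$.

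The main obstacle is the rigorous execution of the multi-step substitution and Cauchy--Schwarz step, in particular verifying that all integrals remain finite (from superlinearity and the boundedness of $\phi_{\epsilon,h}$), that the shift $y/(nh)$ per step in $v$ is precisely what forces both perturbed terminal points to land on the same $\phi$-argument, and that the integer optimization in $n$ is valid for every small $h$. The remaining existence part amounts to a routine adaptation of \cite{GV} to the broader hypotheses (1)--(3).
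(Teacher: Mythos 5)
Your proof is correct, but it takes a genuinely different route to the uniform semiconcavity bound than the paper does. The paper applies $\Gg$ \emph{once}, with the interpolation parameter $\theta=h$ in the change of variables $v\mapsto v\mp\theta y$, and shows that the cone of functions with semiconcavity modulus at most $\bar C=C+\Gamma$ is $\Gg$-invariant for $h$ small; the fixed point is then extracted from this compact (mod constants) invariant set, so the semiconcavity of $u$ must be carried through each application. Your argument instead iterates $\Gg^{n}$ with $n\sim\sqrt{3\Gamma/C}/h$ steps, and the shift $y/(nh)$ per step is designed so that the two perturbed trajectories \emph{meet at the endpoint} $x+h\sum_{j\le n}w_{j}$: the terminal $\phi$-contributions cancel identically and no a priori semiconcavity of $\phi_{\epsilon,h}$ is consumed, only Assumption (3) along the path. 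This decouples existence (obtained by the Perron--Frobenius/Lax--Oleinik machinery) from the a posteriori regularity estimate, and it yields the sharper constant $\bar C=2\sqrt{C\Gamma/3}<C+\Gamma$; the price is bookkeeping of the multi-fold substitution and of the lower-order terms (your $n/3$ is really $(n+1)(2n+1)/(6n)=n/3+1/2+1/(6n)$, so after multiplying by $h$ the surplus is $O(h)$ and disappears in the limit but must be absorbed into $\bar C$ for each fixed small $h$, as must the integer rounding of $n$). Two small gaps worth closing: you invoke that $\bar L(x,v)=L(x+hv,-v)$ inherits (1)--(3), which is true but not immediate for (3) --- apply (3) to $L$ at $(x+hv,-v)$ with displacements $(y-hz,-z)$ and use $|y-hz|^{2}\le 2|y|^{2}+2h^{2}|z|^{2}$, producing $h$-dependent but harmless constants; and the assertion that $\Gg$ and $\bar\Gg$ share the eigenvalue $\lambda_{\epsilon,h}$ deserves an explicit argument (e.g.\ testing the operator identity against the density $\theta_{\epsilon,h}$, as is implicit in Theorem~2), rather than a passing appeal to duality.
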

\begin{proof}
We need to generalize  the proof of Theorem 13 in  \cite{GV}
to a slightly  more general setting. We recall that the proof in
\cite{GV} works only for $L(x,v)=K(v)-U(x)$, with suitable
semiconcavity/semiconvexity on $K$ and $U$.

%If $\phi_{\epsilon,h} $ is the solution of
%$\Gg[\phi_{\epsilon,h}]=\phi_{\epsilon,h}+\lambda_{\epsilon,h}$, then it can be  obtained by an iterative procedure.

Let  $u$ be a function with semiconcavity modulus smaller than  $\sigma$.
We will show that for a suitable  $\sigma$, the image $\Gg (u)$ has also modulus of concavity smaller than $\sigma$.
Because $\Gg$
commutes with constants, we can look at fixed points modulus constants. The set of functions with semiconcavity modulus bounded by $\sigma$ is
invariant by $\Gg$. When quotiented by the constants this set is compact and therefore $\Gg$ admits a fixed point modulo constants, which is precisely
the result of the theorem.

Consider
$$ u_1(x): = - \epsilon \, h \ln  \int e^{  - \frac{h  L(x,v) + u(x+h v) - \lambda_{\epsilon,h}
   }{\epsilon\,  h }} d\, v ,
$$
$$ u_1(x+ h \, y) = - \epsilon \, h
\ln  \int e^{  - \frac{h  L(x+ h\, y,v) + u(x+h\, y+h\, v) - \lambda_{\epsilon,h}
   }{\epsilon\,  h }} d\, v ,
$$
and
$$ u_1 (x -  h \, y) = - \epsilon \, h
\ln  \int e^{  - \frac{h  L(x- h\, y,v) + u(x-h\, y+h\, v) - \lambda_{\epsilon,h}
   }{\epsilon\,  h }        } d\, v .
$$

Let $0< \theta<1$, and $t=1-\theta$. Using the change of coordinates $v \to v -  \theta y$,
 we can write the second equation  as
$$ u_1(x+ h \, y) = - \epsilon \, h
\ln  \int e^{  - \frac{h  L(x+ h\, y,v-  \theta \, y) +
 u(x+h t y+h\, v) - \lambda_{\epsilon,h}
   }{\epsilon\,  h }} d\, v,
$$
whereas the third equation, through the change of coordinates
 $v \to v + \theta y$, can be written as
$$ u_1 (x-  h \, y) = - \epsilon \, h
\ln  \int e^{  - \frac{h  L(x- h\, y,v+   \theta \, y) +
 u(x- th\, y+h \, v) - \lambda_{\epsilon,h}
   }{\epsilon\,  h }} d\, v.
$$
Now using the hypothesis (3) of the Lagrangian  $L$, we get
$$ L( x+ h \, y, v - \theta \,y) + L( x- h \, y, v + \theta \,y) -
2\, L(x,v)
\leq C \, h^2 |y|^2 \, +\,
\Gamma \theta^2 |y|^2$$

We want to estimate the modulus of concavity of $u_1$ knowing that
$$u (x + h\,t\, y) + u (x-h \,t\,y) - 2 u (x) \leq
\sigma \, h^2\, t^2 |y|^2. $$
It is also true that
$$u (x + h\,t\, y+ h\, v) + u (x-h \,t\,y+ h\, v) - 2 u (x+ h\, v) \leq
\sigma \, h^2\, t^2 |y|^2. $$
Hence using the the concavity estimate of  $u$, we can write
$$ u_1(x)=  - \epsilon \, h \ln  \int e^{  - \frac{h  L(x,v) +
 u(x+h v) - \lambda_{\epsilon,h}
   }{\epsilon\,  h }} d\, v \geq$$
   $$- \epsilon \, h \ln  \int e^{  - \frac{[ hL( x+ h  y, v - \theta y)
     +
 u (x + ht y+ h v)  - \lambda_{\epsilon,h}]+[h L( x- h  y, v + \theta y)+ u (x-h ty+ h v)
   - \lambda_{\epsilon,h}]+[-
 C  h^3   -
h\Gamma \theta^2  -
\sigma_u  h^2 t^2 ]|y|^2
   }{2\epsilon\,  h }} d\, v
$$
$$  = - \epsilon \, h \ln  \int e^{  - \frac{
 [ \,\frac{1}{2}\, (h  L(x+ h y,v- \theta y) + u(x+th y+ h v) -
\lambda_{\epsilon,h}) \,+\, \frac{1}{2}
(h  L(x- h y,v+ \theta y) + u(x-thy+ h v) - \lambda_{\epsilon,h})\,]
 }{\epsilon\,  h }} d\, v-
$$
$$ -[\frac{\,C \, h}{2}+ \frac{\sigma_u\, t^2\, }{2}\,]\,\, h^2 \, |y|^2 -
\frac{\Gamma}{2} \, \theta^2 |y|^2 \, h.
$$
By Cauchy-Schwartz inequality we know that given functions   $a,b$  we have
 \[ \int a\,b\leq \left(\int a^2\right)^{\frac{1}{2}  }\, \left(\int b^2\right)^{ \frac{1}{2}  },\]
 hence using the expressions of   $u_1 (x+h y)$ and $u_1 (x- h y)$
we obtain
$$ u_1(x) \geq
\frac{1}{2} \,(u_1(x+h y) +  u_1  (x- h y) ) -[\frac{\,C \, h}{2}+
 \frac{\sigma_u\, t^2\, }{2}\,]\,\, h^2 \, |y|^2 -
\frac{\Gamma}{2} \, \theta^2 |y|^2 \, h.  $$ Therefore the semiconcavity modulus of  $u_1$ is  $\sigma_{u_1}=Ch+
 \sigma_u\, t^2 +
\Gamma \, \theta^2 / h. $

We want to choose a upper bound to the semiconcavity modulus of    $u$ such that the semiconcavity modulus of  $u_1$ is also smaller then this upper bound.  We claim that   $ \bar C=C+ \Gamma$ is the  bound which we are looking for. Indeed, suppose  $\sigma_u<\bar C$, by choosing $\theta=h$, and
taking
 $h$ small we have that
$$\sigma_{u_1}=Ch+
 \sigma\, t^2 +
\Gamma  h \leq(C+\Gamma)h+(C+\Gamma)(1-h)^2\leq C+\Gamma=\bar C$$
 Hence, as in theorem   26  of \cite{GV}, there exist a Lipschitz function $\phi_{\epsilon,h}$ and $\lambda_{\epsilon,h}\in\re$ such that $$\Gg[\phi_{\epsilon,h}]=\phi_{\epsilon,h}+\lambda_{\epsilon,h},$$ also the semiconcavity modulus   of $\phi_{\epsilon,h}$ is smaller than $\bar C$ for all $\ep$ and $h$.
\end{proof}

 \Rm It is easy to see that if we add a constant to each $\phi_{\epsilon,h}$ and $\bar\phi_{\epsilon,h}$, the equations \eqref{G} and \eqref{barG} are also satisfied. Then, for each $\ep$ and $ h$, we choose a pair of functions  $\phi_{\epsilon,h}$ and $\bar\phi_{\epsilon,h}$  and define a new pair of uniformly bounded functions   $\tilde \phi_{\epsilon,h}:=\phi_{\epsilon,h}-\phi_{\epsilon,h}(0)$ and $\tilde{\bar\phi}_{\epsilon,h}:=\bar\phi_{\epsilon,h}+c_{\ep,h}$ such that
\begin{equation}\label{theta}\int_{\tn} e^{-\frac{\tilde{\bar \phi}_{\epsilon,h}(x)+
      \tilde\phi_{\epsilon,h}(x)}{\epsilon h}}dx=1
      \end{equation}
As the functions $\tilde \phi_{\epsilon,h},\tilde{\bar\phi}_{\epsilon,h}$ are uniformly  Lipschitz in $\ep$ and $h$, we have that  $\tilde \phi_{\epsilon,h}$ is uniformly bounded. Moreover,  because $\tilde{\bar\phi}_{\epsilon,h}$ must satisfy the equation \eqref{theta}, we  get that $\tilde{\bar\phi}_{\epsilon,h}$ is also uniformly bounded in $\ep$ and $h$. Now on we will drop  the symbol $\;\;\tilde{}\;\;$.

\bigskip

We note that most of the results in \cite{GV} do not assume the Lagrangian is of the form $L(x,v)=K(v)-U(x)$. All  the results we need from \cite{GV} are true under the hypothesis (1) (2) (3) we mention above:

\begin{theorem} \label{teo2}
  Let $\phi_{\epsilon,h}$, $\bar \phi_{\epsilon,h}$ and
  $\lambda_{\epsilon,h}$ given by Theorem \ref{pfixo}. Also suppose that $\phi_{\epsilon,h}$ and $\bar \phi_{\epsilon,h}$ are uniformly bounded and satisfy \eqref{theta}.

  We define
  $\theta_{\epsilon,h}:\tn\to\re$ as
  \[
  \theta_{\epsilon,h}(x)=e^{-\frac{\bar \phi_{\epsilon,h}(x)+
      \phi_{\epsilon,h}(x)}{\epsilon h}}.
  \]

  Then, the probability density
$$\mu_{\epsilon,h}(x,v)=\theta_{\epsilon,h}(x)\;e^{-\frac{hL(x,v)+\phi_{\epsilon,h}(x+hv)-\phi_{\epsilon,h}(x)-\lambda_{\epsilon,h}}{\epsilon h}}$$
minimizes the functional
\[
 \int_{\tn\times\rn}
   L(x,v)d\mu(x,v)+\epsilon S[\mu]
\]
 over the densities in  $\mathcal M_h$.
\end {theorem}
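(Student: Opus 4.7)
The plan is to check that the stated $\mu_{\epsilon,h}$ is a probability density in $\mathcal M_h$, and then establish optimality via a Gibbs--Kullback--Leibler identity. The approach parallels the one in \cite{GV}; I only need to verify that the generalized hypotheses (1)--(3) on $L$ suffice at every step.

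First I would carry out the $v$-integration in the formula for $\mu_{\epsilon,h}(x,v)$. The fixed-point equation $\Gg[\phi_{\epsilon,h}]=\phi_{\epsilon,h}+\lambda_{\epsilon,h}$ from Theorem~\ref{pfixo} rewrites as
$$\int_{\rn}e^{-(hL(x,v)+\phi_{\epsilon,h}(x+hv))/(\epsilon h)}\,dv=e^{-(\phi_{\epsilon,h}(x)+\lambda_{\epsilon,h})/(\epsilon h)},$$
so the $v$-marginal of $\mu_{\epsilon,h}$ is exactly $\theta_{\epsilon,h}$, and the normalization \eqref{theta} then gives $\int\mu_{\epsilon,h}=1$. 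Next, for the holonomy condition, given $\varphi\in C(\tn)$ I would compute $\int\varphi(x+hv)\,d\mu_{\epsilon,h}$ via the substitution $y=x+hv$ inside the $v$-integral, swap the order of integration, and apply the dual fixed-point equation $\bar\Gg[\bar\phi_{\epsilon,h}]=\bar\phi_{\epsilon,h}+\lambda_{\epsilon,h}$ to collapse the inner integral back to $\theta_{\epsilon,h}(y)$; the result equals $\int\varphi(x)\theta_{\epsilon,h}(x)\,dx=\int\varphi(x)\,d\mu_{\epsilon,h}$, confirming that $\mu_{\epsilon,h}\in\mathcal M_h$.

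For the minimization step, I would introduce the conditional kernel
$$\rho_*(x,v):=e^{-(hL(x,v)+\phi_{\epsilon,h}(x+hv)-\phi_{\epsilon,h}(x)-\lambda_{\epsilon,h})/(\epsilon h)},$$
which is a probability density in $v$ for each $x$ by the $\Gg$-fixed-point equation above. For any $\mu\in\mathcal M_h^{a.c.}$ with $x$-marginal $m$ and conditional $\rho(x,v)=\mu(x,v)/m(x)$, the holonomy constraint lets me add $h^{-1}\int(\phi_{\epsilon,h}(x+hv)-\phi_{\epsilon,h}(x))\,d\mu=0$ to the functional; absorbing the constant $\lambda_{\epsilon,h}/h$ into the exponent defining $\rho_*$ then gives the identity
$$\int L\,d\mu+\epsilon S[\mu]=\frac{\lambda_{\epsilon,h}}{h}+\epsilon\int_{\tn}m(x)\int_{\rn}\rho(x,v)\ln\frac{\rho(x,v)}{\rho_*(x,v)}\,dv\,dx.$$
Each inner integral is a Kullback--Leibler divergence between two probability densities in $v$, hence non-negative by Jensen's inequality, with equality iff $\rho(x,\cdot)=\rho_*(x,\cdot)$ for $m$-a.e.\ $x$. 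Thus the functional is bounded below by $\lambda_{\epsilon,h}/h$, and at $\mu=\mu_{\epsilon,h}$ equality holds by construction, so the minimum is attained there.

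The main obstacle is the bookkeeping in the algebraic rearrangement that isolates the Kullback--Leibler term; once the two fixed-point identities of Theorem~\ref{pfixo} are in hand, everything else is essentially Gibbs' variational principle. The only novelty over \cite{GV} is that Theorem~\ref{pfixo} now produces the pair $(\phi_{\epsilon,h},\bar\phi_{\epsilon,h},\lambda_{\epsilon,h})$ and the uniform semiconcavity bound under the weaker hypotheses (1)--(3), so the rest of the argument transports verbatim.
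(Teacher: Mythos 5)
Your proof is correct. The normalization and holonomy checks are exactly the paper's verification that $\theta_{\epsilon,h}$ is a fixed point of the balance operator (after the change of variables $y=x+hv$, your holonomy computation reduces to the paper's identity $\int_{\rn}\theta_{\epsilon,h}(x-hv)\,e^{-(hL(x-hv,v)+\phi_{\epsilon,h}(x)-\phi_{\epsilon,h}(x-hv)-\lambda_{\epsilon,h})/(\epsilon h)}\,dv=\theta_{\epsilon,h}(x)$). The difference is in the optimality step: the paper, having checked this identity, simply cites Theorem~32 of \cite{GV} for the conclusion, whereas you unpack that theorem into a self-contained Gibbs variational argument. Writing $L(x,v) = -\epsilon\ln\rho_*(x,v) - h^{-1}(\phi_{\epsilon,h}(x+hv)-\phi_{\epsilon,h}(x)) + \lambda_{\epsilon,h}/h$, using the holonomy constraint (valid since $\phi_{\epsilon,h}\in C(\tn)$) to kill the middle term, and recombining with $\epsilon S[\mu]=\epsilon\int\ln\rho\,d\mu$ gives precisely your Kullback--Leibler decomposition, whose non-negativity yields the bound $\lambda_{\epsilon,h}/h$, attained at $\mu_{\epsilon,h}$ since its conditional is $\rho_*$. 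What your route buys is transparency and independence from \cite{GV}: the reader sees directly why the minimizer has this Gibbs form, that the minimal value is $\lambda_{\epsilon,h}/h$ (which the paper only states later in Theorem~\ref{teo3}(a)), and that the minimizing conditional is unique $m$-a.e. What the paper's route buys is brevity, at the cost of keeping the crucial variational identity implicit in a citation.
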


\begin{proof}

Indeed,
$ \theta_{\epsilon,h}$
satisfies

$$
\int_{\rn}\theta_{\epsilon,h}(x-hv)e^{-\frac{hL(x-hv,v)+
\phi_{\epsilon,h}(x)-\phi_{\epsilon,h}(x-hv)-\lambda_{\epsilon,h}}{\epsilon
    h}}dv=
$$
$$\int_{\rn} e^{-\frac{\bar \phi_{\epsilon,h}(x-hv )+
      \phi_{\epsilon,h}(x-h v)}{\epsilon h}} \, e^{-\frac{hL(x-hv,v)+
\phi_{\epsilon,h}(x)-\phi_{\epsilon,h}(x-hv)-\lambda_{\epsilon,h}}{\epsilon h}}dv
=$$
$$
 e^{-\frac{ \phi_{\epsilon,h}(x)}{\epsilon h}}\,\int_{\rn} e^{-\frac{\bar \phi_{\epsilon,h}(x-hv )}{\epsilon h}} \, e^{-\frac{hL(x-hv,v)-\lambda_{\epsilon,h}}{\epsilon h}}dv=
$$
$$
  e^{-\frac{ \phi_{\epsilon,h}(x)}{\epsilon h}}\, \ e^{-\frac{\bar  \phi_{\epsilon,h}(x)}{\epsilon h}}=
\theta_{\epsilon,h}(x).
$$

Therefore, from Theorem 32 in \cite{GV} the result follows.

\end{proof}

\begin{theorem}\label{teo3}

Let $\phi_{\epsilon,h}$, $\bar \phi_{\epsilon,h}$ and
  $\lambda_{\epsilon,h}$ given by Theorem \ref{pfixo}. Also suppose that $\phi_{\epsilon,h}$ and $\bar \phi_{\epsilon,h}$ are uniformly bounded and satisfy \eqref{theta}.  Then, for $h$ fixed, when $\ep\to 0$, we have

(a)$$\overline H_{\epsilon,h}:=\int_{\tn\times\rn} L(x,v)d\mu_{\epsilon,h}(x,v)+\epsilon S[\mu_{\epsilon,h}]=\frac{\lambda_{\epsilon,h}}{h}
$$ and $\overline H_{\epsilon,h}\to\overline H_{h}$,

\medskip

(b) Through some subsequence,\,\,$\phi_{\epsilon,h}\to \phi_h,\,\,\bar\phi_{\epsilon,h}\to \bar\phi_h$  uniformly.   $\phi_h, \bar\phi_h$ are semiconcave functions, with the  semiconcavity constant bounded by $\bar C$ (as in  theorem \ref{pfixo}), and satisfy \begin{equation} \label{forsub}\phi_{h}(x)= \inf_{v \in \mathbb{R}^N } \, \{\phi_{h}(x+h\, v) + h\, L(x,v) -  h\, \Hh_h \}\end{equation} and \begin{equation} \label{bacsub}\bar\phi_h(x)= \inf_{v \in \mathbb{R}^N } \, \{ \bar\phi_h(x-h\, v) + h\, L(x-hv,v) -  h\, \Hh_h \}.
\end{equation}

\medskip

(c)\,\, $\mu_{\epsilon,h}\rightharpoonup \mu_h,$ where $\mu_h$ is a discrete Mather measure.
\end{theorem}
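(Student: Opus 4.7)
The plan is to prove (a)--(c) together, treating (a) as an algebraic identity that follows from the fixed-point equation plus the holonomy constraint, (b) as a compactness-plus-Laplace-principle argument, and (c) as the weak-limit/minimality step which simultaneously pins down $\lambda_h/h$ and closes the convergence claim in (a).

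For (a), the fixed-point equation $\Gg[\phi_{\epsilon,h}]=\phi_{\epsilon,h}+\lambda_{\epsilon,h}$ can be rewritten as
\[
\int_{\rn} e^{-\frac{hL(x,v)+\phi_{\epsilon,h}(x+hv)-\phi_{\epsilon,h}(x)-\lambda_{\epsilon,h}}{\epsilon h}}\,dv=1,
\]
which says precisely that the $v$-marginal $\int \mu_{\epsilon,h}(x,w)\,dw$ equals $\theta_{\epsilon,h}(x)$. Substituting the explicit form of $\mu_{\epsilon,h}$ from Theorem \ref{teo2} into $S$, the logarithm telescopes and gives
\[
\epsilon S[\mu_{\epsilon,h}] = -\int L\,d\mu_{\epsilon,h}-\frac{1}{h}\int\bigl(\phi_{\epsilon,h}(x+hv)-\phi_{\epsilon,h}(x)\bigr)d\mu_{\epsilon,h}+\frac{\lambda_{\epsilon,h}}{h}.
\]
Since $\mu_{\epsilon,h}\in\mathcal M_h$, the middle term vanishes, so $\overline H_{\epsilon,h}=\lambda_{\epsilon,h}/h$. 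Convergence $\overline H_{\epsilon,h}\to\overline H_h$ will follow once $\lambda_h/h=\overline H_h$ is identified in (c).

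For (b), Theorem \ref{pfixo} supplies a uniform semiconcavity modulus $\bar C$; together with the boundedness normalization and compactness of $\tn$ this yields a uniform Lipschitz bound on $\phi_{\epsilon,h}$ and $\bar\phi_{\epsilon,h}$. Arzel\`a--Ascoli extracts a subsequence with $\phi_{\epsilon,h}\to\phi_h$ and $\bar\phi_{\epsilon,h}\to\bar\phi_h$ uniformly, and semiconcavity with constant $\bar C$ is preserved under uniform limits. The constants $\lambda_{\epsilon,h}$ are bounded (evaluate the fixed-point equation at any point) and converge, along a subsequence, to some $\lambda_h$. The plan is then to pass to the limit in $\Gg[\phi_{\epsilon,h}]=\phi_{\epsilon,h}+\lambda_{\epsilon,h}$ via Laplace's principle: for fixed $x$ and $h$ the uniform superlinearity of $L$ localizes the integrand at its minimizer, so
\[
-\epsilon h\ln\int_{\rn}e^{-\frac{hL(x,v)+\phi_{\epsilon,h}(x+hv)}{\epsilon h}}dv\;\longrightarrow\;\min_{v\in\rn}\{hL(x,v)+\phi_h(x+hv)\}.
\]
This gives \eqref{forsub} with $\Hh_h:=\lambda_h/h$; applying the same reasoning to $\bar\Gg$, which is the $\Gg$-operator of the time-reversed Lagrangian $L(x+hv,-v)$, yields \eqref{bacsub}.

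For (c), the first step is tightness of $\{\mu_{\epsilon,h}\}$. Minimality against a fixed smooth admissible $\nu\in\mathcal M_h^{a.c.}$ with finite entropy gives $\lambda_{\epsilon,h}/h\le\int L\,d\nu+\epsilon S[\nu]$, hence $\lambda_{\epsilon,h}/h$ is bounded above. Combined with a lower bound on $\epsilon S[\mu_{\epsilon,h}]$ (read off the Gaussian-type profile of $\mu_{\epsilon,h}$) and the identity in (a), the action $\int L\,d\mu_{\epsilon,h}$ is uniformly bounded, and superlinearity of $L$ then gives tightness in $v$. Extract $\mu_{\epsilon,h}\rightharpoonup\mu_h$; the holonomy constraint passes to the limit since $(x,v)\mapsto\varphi(x+hv)-\varphi(x)$ is continuous with at most linear growth in $|v|$, so $\mu_h\in\mathcal M_h$. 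To conclude that $\mu_h$ is a discrete Mather measure, combine (i) $\lambda_h/h\le\int L\,d\nu$ for every smooth $\nu\in\mathcal M_h^{a.c.}$ (taking $\epsilon\to 0$ in the minimality inequality) with a mollification argument approximating an arbitrary $\mu\in\mathcal M_h$ by smooth holonomic densities with actions converging to $\int L\,d\mu$, giving $\lambda_h/h\le\overline H_h$; and (ii) lower semicontinuity of $\mu\mapsto\int L\,d\mu$ under the superlinearity bound, giving $\int L\,d\mu_h\le\liminf\int L\,d\mu_{\epsilon,h}\le\lambda_h/h$. Thus $\mu_h$ minimizes, $\lambda_h/h=\overline H_h$, and this closes (a). The main obstacle is the mollification step in (c): a generic minimizer in $\mathcal M_h$ may be singular, and plain convolution in $x$ does not preserve the rigid discrete holonomy $\int[\varphi(x+hv)-\varphi(x)]\,d\mu=0$, so one must combine mollification in $x$ with a compensating adjustment of the velocity conditionals to restore the constraint while keeping the action close to its original value.
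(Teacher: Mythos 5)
The paper itself does not give a self-contained proof: it simply cites Theorems~37--40 of~\cite{GV} and the already-established Theorem~\ref{teo2}. Your proposal is therefore a genuinely different (and much more informative) route, and parts~(a) and~(b) of your outline are sound: the algebraic identity in~(a) from the fixed-point equation plus holonomy is exactly right, and the Arzel\`a--Ascoli extraction in~(b) using the uniform semiconcavity from Theorem~\ref{pfixo}, followed by a Laplace-type passage to the limit in $\Gg[\phi_{\epsilon,h}]=\phi_{\epsilon,h}+\lambda_{\epsilon,h}$, is the correct mechanism (one should only be careful that $\phi_{\epsilon,h}$ in the exponent depends on $\epsilon$, so you need uniform convergence together with uniform superlinearity of $L$ to localize the infimum; but this is manageable).

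The genuine gap is the one you name at the end: your strategy for showing $\lambda_h/h\le\overline H_h$ relies on approximating an arbitrary, possibly singular, holonomic measure $\mu\in\mathcal M_h$ by smooth densities in $\mathcal M_h^{a.c.}$ with actions converging to $\int L\,d\mu$. As you correctly observe, naive convolution destroys the discrete holonomy constraint $\int[\varphi(x+hv)-\varphi(x)]\,d\mu=0$, and you do not supply the compensating construction, so as written this step does not close. Moreover, your step~(ii), that $\liminf\int L\,d\mu_{\epsilon,h}\le\lambda_h/h$, implicitly needs $\liminf\epsilon S[\mu_{\epsilon,h}]\ge 0$; since $S[\mu]$ is a relative entropy of conditionals in $v$ and can be negative, this requires an explicit argument rather than a sign claim read off the Gaussian profile.

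Both difficulties can be sidestepped entirely by using the output of~(b) as a subaction. Once you have
\[
\phi_h(x)\le \phi_h(x+hv)+hL(x,v)-\lambda_h \qquad\forall\,(x,v)\in\tn\times\rn,
\]
integrate this inequality against \emph{any} $\mu\in\mathcal M_h$ (not only smooth ones): the term $\int[\phi_h(x+hv)-\phi_h(x)]\,d\mu$ vanishes by holonomy, so $\int L\,d\mu\ge\lambda_h/h$, i.e.\ $\overline H_h\ge\lambda_h/h$, with no mollification. Conversely, comparing $\mu_{\epsilon,h}$ against one fixed smooth competitor gives an upper bound on $\lambda_{\epsilon,h}/h$ uniformly in $\epsilon$, and lower semicontinuity of the action under tightness gives $\int L\,d\mu_h\le\lambda_h/h$. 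Combining the two yields $\int L\,d\mu_h=\lambda_h/h=\overline H_h$, which simultaneously finishes~(a) and~(c). I would suggest replacing the mollification step with this duality argument.
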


\begin{proof} From theorems 37 and 38 \cite{GV} and also by theorem \ref{teo2} we obtain item (a), by theorems 39 and 40 of \cite{GV} we get, respectively, (b) and (c).
\end{proof}

\vspace{.2cm}

If we use the so called Hopf-Cole transformation $\phi \to e^{- \frac{\phi}{\epsilon h} }\,=\, \varphi$, the setting above
can be written as the search for the eigenfunction associated to the largest
eigenvalue of the Perron operator $\varphi \to {\cal L} (\varphi)$
acting on continuous functions $\varphi$
$$ \,\, x \, \to \varphi(x) \, \Rightarrow \, x \to  \, {\cal L} \, (\varphi)\, (x) = \int \,
e^{-\, \frac{  L ( x,v)    }{\epsilon  }   }   \, \varphi(x+h\, v) \, d \, v.$$

The largest eigenvalue  of this operator is (see \cite{GV} Corollary 27) $ e^{-\, \frac{\lambda_{\epsilon, h}}{\epsilon h}}$.

\begin{definition} A property P is said to be generic for the Lagrangian $L$ if
there exists  a generic set $\mathcal O$ (in the Baire sense) on
the set $C^{\infty}(\tn,\re)$ such that if $\psi$ is in $\mathcal O$
then $L+\psi$ has property P.
\end{definition}
%We will denote by $ \mathcal M_0(L)$ the set of holonomic minimizing measures, hence
%$\int_{\tn\times\rn} L(x,v)d\mu=- c(L)$.

\begin{theorem} Given a Lagrangian  $L$ there exists a generic set  $\mathcal O\subset C^{\infty}(\tn)$ such that

(a) If  $\psi\in\mathcal O$ then there exists only one Mather measure for  $L+\psi$, such measure $\mu$ is uniquely ergodic.

(b) $\supp(\mu)=\hat{\mathcal A}(L+\psi)$, where $\hat{\mathcal A}$ is the Aubry set.

\end{theorem}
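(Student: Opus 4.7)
The plan is to follow Mañé's classical approach to generic uniqueness, adapted to the setting of this paper.

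First, I define the value function $\alpha\colon C^{\infty}(\tn,\re)\to\re$ by
\[
\alpha(\psi)=\inf_{\mu}\int_{\tn\times\rn}\bigl(L(x,v)+\psi(x)\bigr)\,d\mu(x,v),
\]
where the infimum runs over the invariant (or discrete holonomic) probability measures relevant to the problem. Since each map $\psi\mapsto\int(L+\psi)\,d\mu$ is affine in $\psi$, and $\alpha$ is a pointwise infimum of such affine functionals, $\alpha$ is concave and $1$-Lipschitz in the sup norm; in particular it is continuous in the Fréchet $C^{\infty}$-topology. One then identifies the superdifferential $\partial\alpha(\psi)$ with the set of Mather measures for $L+\psi$: $\mu\in\partial\alpha(\psi)$ iff $\alpha(\psi+\varphi)\le\alpha(\psi)+\int\varphi\,d\mu$ for every $\varphi$, which is equivalent to $\mu$ being admissible and minimizing the $(L+\psi)$-action.

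Second, since $C^{\infty}(\tn,\re)$ is a separable Fréchet space and $\alpha$ is continuous and concave, a standard Mazur/Stegall-type result yields a dense $G_\delta$ set $\mathcal O_0\subset C^{\infty}(\tn,\re)$ of points at which $\alpha$ is Gâteaux differentiable. At any $\psi\in\mathcal O_0$, the superdifferential $\partial\alpha(\psi)$ is a singleton $\{\mu_\psi\}$, which gives uniqueness of the Mather measure for $L+\psi$. Unique ergodicity follows from an ergodic-decomposition argument: writing $\mu_\psi=\int\mu_\omega\,d\nu(\omega)$, each component $\mu_\omega$ must also be action-minimizing (otherwise $\mu_\psi$ would not be), hence each $\mu_\omega$ is a Mather measure, and by uniqueness every $\mu_\omega$ equals $\mu_\psi$; in particular $\mu_\psi$ is ergodic and uniquely ergodic on its support. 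This settles part (a).

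For part (b), the main obstacle is to upgrade the trivial inclusion $\supp(\mu_\psi)\subset\hat{\mathcal A}(L+\psi)$ to an equality. The plan is to exploit the separating subaction $u$ for $L+\psi$ whose existence is established in the last section: replacing $L+\psi$ by the cohomologous Lagrangian $\hat L=L+\psi+du$ (for which the Mather measures, the critical value $\Hh$ and the Aubry set are unchanged), the non-negative function $\hat L-\Hh$ vanishes exactly on $\hat{\mathcal A}(L+\psi)$. Consequently every invariant probability measure supported in $\hat{\mathcal A}(L+\psi)$ is a Mather measure for $L+\psi$, and by (a) it must coincide with $\mu_\psi$; this forces $\supp(\mu_\psi)=\hat{\mathcal A}(L+\psi)$. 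The residual set of the theorem is then $\mathcal O=\mathcal O_0$, intersected if necessary with the residual set on which a separating subaction exists.

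The principal difficulty is step (b): the reverse inclusion genuinely needs the separating subaction built in the last section, together with the fact that every invariant measure on the Aubry set of $\hat L$ is action-minimizing. Everything else — concavity of $\alpha$, generic Gâteaux differentiability, the identification of $\partial\alpha(\psi)$ with the Mather measures, and the ergodic-decomposition step — is routine and follows Mañé's template almost verbatim.
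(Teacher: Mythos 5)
The paper does not prove this theorem at all; it simply cites Contreras--Paternain \cite{CP}, so you are attempting to reconstruct an argument the paper outsources entirely. Your sketch of part (a) is essentially the standard Ma\~n\'e argument and is correct in outline: $\alpha$ is a concave, Lipschitz infimum of affine functionals, the Mather measures at $\psi$ are exactly the elements of $\partial\alpha(\psi)$, and on a residual set $\partial\alpha(\psi)$ is a singleton; ergodicity then follows because the ergodic components of a minimizer are themselves minimizers, and unique ergodicity because any invariant measure carried by the Mather set is automatically action-minimizing (this last point deserves a sentence in a complete write-up, but it is standard).

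Part (b), however, has a genuine gap at the final step. You correctly deduce that, after passing to $\hat L = L+\psi+du$ with a separating subsolution $u$, the set $\{\hat L-\Hh=0\}$ is exactly $\hat{\mathcal A}(L+\psi)$, and therefore every invariant probability measure supported in $\hat{\mathcal A}(L+\psi)$ is a Mather measure, hence (by part (a)) equals $\mu_\psi$. But the conclusion ``this forces $\supp(\mu_\psi)=\hat{\mathcal A}(L+\psi)$'' does not follow: the Aubry set can be strictly larger than the union of supports of all invariant measures in it. The standard example is a pendulum at the critical energy level, where the Aubry set is the whole separatrix while the unique Mather measure is the Dirac mass at the hyperbolic fixed point; every invariant measure supported in that Aubry set coincides with the Mather measure, yet the Aubry set is not the support. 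What you actually obtain from your argument is only the containment $\supp(\mu_\psi)\subset\hat{\mathcal A}(L+\psi)$ and the fact that $\hat{\mathcal A}(L+\psi)$ carries no other invariant measure. To get equality generically one needs an additional, and substantially harder, perturbation step (this is the real content of \cite{CP}): roughly, if $\hat{\mathcal A}(L+\psi)\setminus\supp(\mu_\psi)\ne\emptyset$, one adds a small nonnegative bump $\psi_1$ vanishing exactly on $\pi(\supp(\mu_\psi))$ to strictly increase the Peierls barrier at the extra points, so that they leave the Aubry set; one must then verify this can be done while remaining in the residual set from part (a). This is precisely the mechanism the present paper reproduces in the discrete setting (the last proposition of Section 5.1, where $\psi_1\geq 0$ with $\{\psi_1=0\}=\pi_1(\supp(\mu_h))$ is introduced to force $\pi_1(\supp(\mu_h))=\Omega_h(L+\psi_0+\psi_1)$). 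Without that perturbation step your proof of (b) is incomplete.
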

The proof of this theorem can be found in \cite{CP}.

\bigskip

\textbf{Assumption:} We will suppose that the Lagrangian $L(x,v) $ is generic, i.e., the Mather measure is unique, which we will denote by $\mu$, and $\supp(\mu)=\hat{\mathcal A}(L)$.

\bigskip

\Rm As we  suppose  the Lagrangian is generic, we have only  one static class, and the Mather measure is ergodic.  Then  by corollary 4-8.5 of \cite{CI} we know that the set of weak-KAM solutions (positive and negative) are unitary, modulo an additive constant. It can be shown, see \cite{Fa}, that  $-\phi$ is a positive weak-KAM solution, if and only if, $\phi$ is a viscosity solution of $H(\nabla\phi(x),x)=-\Hh_0$ (remember we are using the definition $H(p,x)=\sup_v(-p\cdot v -L(x,v)$), and $\bar \phi$ is a negative weak-KAM solution, if and only if, $\bar\phi$ is a viscosity solution of
$H(-\nabla\bar\phi(x),x)=-\Hh_0$.

Let us call $\phi_0$ and $\bar\phi_0$,  the unique viscosity solutions of $H(\nabla\phi(x),x)=-\Hh_0$ and $H(-\nabla\bar\phi(x),x)=-\Hh_0$, respectively.

Applying the corollary 5.3.7 of \cite{Fa} and the remark above, we obtain:
\begin{corollary}\label{bpeierls}Suppose that  the Lagrangian $L$ is generic, then  we have that
$$\phi_0(x)+\bar\phi_0(x)=h(x,x),$$ where $h$ is the Peierls barrier.
\end{corollary}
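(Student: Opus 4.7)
The strategy is to identify $\phi_0$ and $\bar\phi_0$ with one half of a conjugate pair of weak-KAM solutions and then apply Fathi's Corollary 5.3.7 directly.

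First, I would invoke the remark preceding the corollary: the genericity of $L$ forces a single static class, so by Corollary 4-8.5 of \cite{CI} the positive and negative weak-KAM solutions are unique up to an additive constant. With the sign convention of the paper, the remark identifies $u^+ := -\phi_0$ as the unique positive weak-KAM solution and $u^- := \bar\phi_0$ as the unique negative weak-KAM solution (both modulo constants).

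Next, the plan is to apply Fathi's Corollary 5.3.7, which states that for any conjugate pair $(u^+, u^-)$ of weak-KAM solutions, that is, a pair whose additive constants are synchronized so that $u^+$ and $u^-$ agree on the projected Aubry set $\mathcal{A}$, one has
\[
u^-(x) - u^+(x) = h(x,x) \qquad \text{for all } x \in \tn.
\]
Substituting $u^+ = -\phi_0$ and $u^- = \bar\phi_0$ yields $\phi_0(x) + \bar\phi_0(x) = h(x,x)$, which is the claim.

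The subtle point — and where I expect the real work to lie — is verifying that the constants carried by $\phi_0$ and $\bar\phi_0$ are exactly the conjugate-pair constants. These were inherited in Theorem \ref{teo3} from the normalizations $\phi_{\epsilon,h}(0)=0$ and $\int_\tn e^{-(\bar\phi_{\epsilon,h}+\phi_{\epsilon,h})/(\epsilon h)}\,dx = 1$. The expected mechanism is that as $\epsilon,h \to 0$ the density $\theta_{\epsilon,h}$ concentrates on the Aubry set $\mathcal{A}$; for the integral to remain equal to $1$ the exponent must tend to $0$ on $\mathcal{A}$, so $\phi_0 + \bar\phi_0 \equiv 0$ on $\mathcal{A}$, which is precisely the conjugacy normalization (since $h(x,x) \equiv 0$ on $\mathcal{A}$). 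Once this matching of additive constants is checked, the identity $\phi_0 + \bar\phi_0 = h(x,x)$ follows immediately from Fathi's corollary.
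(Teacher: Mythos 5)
Your proposal takes essentially the same route as the paper: genericity of $L$ yields a unique conjugate pair $(-\phi_0,\bar\phi_0)$ of weak-KAM solutions, and Fathi's Corollary 5.3.7 then identifies $\phi_0+\bar\phi_0$ with the Peierls barrier $h(\cdot,\cdot)$ on the diagonal; indeed the paper's own proof is exactly the one-line appeal to that corollary together with the preceding remark. Your additional care about matching the additive constants via the normalization \eqref{theta} is a sensible observation about a subtlety the paper leaves implicit — it treats $\phi_0,\bar\phi_0$ as already normalized to form the conjugate pair — and it is independently consistent with what Theorem \ref{limite} establishes later.
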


\begin{theorem}\label{teo5}Let $L(x,v)$ be a generic  Lagrangian  that satisfies the hypothesis (1) to (3) above. For each $h$, let $\phi_h, \bar\phi_h$ be  the functions,  $\mu_h$ be the measure,  and $\Hh_h$ be the constant that are given  in  theorem \ref{teo3}. Then, when $h\to 0$ we have

\medskip

 $(a)\,\,  \Hh_h\to\Hh_0=\int_{\tn\times\rn} L(x,v)d\mu, $

\medskip

  $(b)\,\,\mbox{Through some subsequence,\;\;\;} \phi_h\to\phi_0 \,\,\,\mbox{ and }\,\,\,\bar\phi_h\to\bar\phi_0,$ uniformly,

\medskip

$(c) \,\,\mu_h\rightharpoonup \mu.$

\end{theorem}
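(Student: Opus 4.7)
The plan is to establish compactness of $\{\mu_h\}$ in the weak-$\ast$ topology, identify every subsequential limit with the unique Mather measure $\mu$ using the genericity assumption, and then extract (a) and (b) from uniform a priori estimates on $\Hh_h$, $\phi_h$ and $\bar\phi_h$.

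For (a) and (c), first I would prove $\limsup_{h \to 0} \Hh_h \le \Hh_0$ by constructing, from $\mu$, an approximating family $\nu_h \in \mathcal M_h^{a.c.}$ (for instance by convolving $\mu$ with a narrow Gaussian in $v$ and then projecting onto the discrete holonomy constraint \eqref{constraint}) with $\int L\, d\nu_h \to \int L\, d\mu = \Hh_0$; minimality of $\mu_h$ then gives $\Hh_h = \int L\, d\mu_h \le \int L\, d\nu_h$. Uniform superlinearity combined with the resulting uniform bound on $\int L\, d\mu_h$ implies tightness of $\{\mu_h\}$, so along a subsequence $h_k \to 0$ one has $\mu_{h_k} \rightharpoonup \mu^{\ast}$. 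For any $\varphi \in C^2(\tn)$, the discrete holonomy constraint rewrites, via Taylor expansion, as
$$0 = \int_{\tn \times \rn} \frac{\varphi(x+hv) - \varphi(x)}{h}\, d\mu_h(x,v) = \int \nabla\varphi(x) \cdot v\, d\mu_h(x,v) + o(1),$$
where the error is controlled through superlinearity and a standard truncation in $v$. Passing to the limit shows that $\mu^{\ast}$ is holonomic in the continuous sense, and lower semicontinuity of $\nu \mapsto \int L\, d\nu$ yields $\int L\, d\mu^{\ast} \le \liminf_k \Hh_{h_k} \le \Hh_0$; hence $\mu^{\ast}$ is a continuous-time Mather measure, and the genericity hypothesis forces $\mu^{\ast} = \mu$. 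Since every subsequential limit equals $\mu$, the full family converges: $\mu_h \rightharpoonup \mu$ and $\Hh_h \to \Hh_0$.

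For (b), by Theorem \ref{pfixo} and the Remark following \eqref{theta} the pair $(\phi_h, \bar\phi_h)$ is uniformly bounded and uniformly semiconcave in $h$; on the compact torus this implies uniform Lipschitz estimates, and Arzelá--Ascoli supplies a subsequence along which $\phi_h \to \phi^{\ast}$ and $\bar\phi_h \to \bar\phi^{\ast}$ uniformly. Passing to the limit in the discrete Bellman equations \eqref{forsub} and \eqref{bacsub} -- the minimizing velocities remain in a compact set by superlinearity -- one verifies that $\phi^{\ast}$ is a viscosity solution of $H(\nabla\phi, x) = -\Hh_0$ and $\bar\phi^{\ast}$ is a viscosity solution of $H(-\nabla\bar\phi, x) = -\Hh_0$. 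The genericity hypothesis gives uniqueness up to additive constants for each of these viscosity solutions; combined with the normalization fixed after \eqref{theta}, this forces $\phi^{\ast} = \phi_0$ and $\bar\phi^{\ast} = \bar\phi_0$, so the full subsequence converges as required.

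The main technical point is verifying that $\phi^{\ast}$ solves the continuous Hamilton--Jacobi equation in the viscosity sense. One must translate the discrete inf-convolution in \eqref{forsub} into pointwise sub- and super-differential inequalities against smooth test functions touching $\phi^{\ast}$, uniformly in $h$; superlinearity confines the optimizing $v$ and uniform semiconcavity regularizes $\phi_h$ near the contact point, but some care is required to control the $h \to 0$ limit of these discrete optimality conditions. Everything else is a comfortable combination of weak compactness, lower semicontinuity, and the uniqueness supplied by the genericity hypothesis.
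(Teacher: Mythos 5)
Your proof takes a genuinely different route from the paper's. The paper's proof of Theorem~\ref{teo5} is essentially citational: part (a) is referred to \cite{Gom} directly, and parts (b) and (c) are obtained by verifying the preconditions (uniform semiconcavity of $\phi_h$, $\bar\phi_h$ from hypothesis~(3), a uniform Lipschitz bound, and a uniform bound on the optimizing velocities coming from superlinearity together with (a)) and then invoking Theorems~7.2, 7.3, and 7.4 of \cite{Gom}. You instead try to reprove those results from first principles via a compactness--identification scheme (tightness from superlinearity, holonomy in the limit, lower semicontinuity of the action, uniqueness by genericity). That structure is sound, and the way you deduce both $\Hh_h\to\Hh_0$ and $\mu_h\rightharpoonup\mu$ from ``every subsequential limit is the unique Mather measure'' is clean.

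There is, however, a genuine gap in the very first step. To get $\limsup_{h\to0}\Hh_h\le\Hh_0$ you propose building $\nu_h\in\mathcal M_h^{a.c.}$ by ``convolving $\mu$ with a narrow Gaussian in $v$ and then projecting onto the discrete holonomy constraint.'' Convolving $\mu$ in the $v$-variable destroys the holonomy identity $\int\varphi(x+hv)-\varphi(x)\,d\mu=0$, and ``projecting'' onto the affine subspace $\mathcal M_h$ is not a well-defined operation here, let alone one under which you can control $\int L\,d\nu_h$. Note also that the requirement $\nu_h\in\mathcal M_h^{a.c.}$ is superfluous: $\Hh_h$ is defined as the minimum over all of $\mathcal M_h$, so any competitor, smooth or not, suffices. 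The natural fix is to push $\mu$ forward under the time-$h$ discretization of the Euler--Lagrange flow: define $\Psi_h(x,v)=(x,w_h(x,v))$ with $w_h(x,v)=\bigl(\pi_1\circ\varphi^h_L(x,v)-x\bigr)/h$, where $\varphi^h_L$ is the time-$h$ Lagrangian flow. Invariance of $\mu$ under $\varphi^h_L$ makes $(\Psi_h)_*\mu$ a discrete holonomic probability, and compactness of $\supp\mu$ plus continuity of $L$ and of $w_h\to v$ gives $\int L\,d(\Psi_h)_*\mu\to\int L\,d\mu=\Hh_0$. With that substitution the rest of your (a)/(c) argument goes through, including the Taylor-expansion limit of the holonomy constraint, which is controllable from uniform superlinearity alone by a modulus-of-continuity splitting rather than a second-moment bound. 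On (b), you correctly isolate the passage from the discrete Bellman equations \eqref{forsub}--\eqref{bacsub} to the continuous Hamilton--Jacobi equation as the technical crux, but you leave it unverified; the paper does not carry this out either (it delegates it entirely to Theorem~7.2 of \cite{Gom}), so this is a deferred step in both treatments rather than an error on your part.
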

\begin{proof}
(a) See \cite{Gom}.

 In order to  apply theorems 7.2,7.3 and 7.4 of \cite{Gom} we need the following remark: as the Lagrangian satisfies the hypothesis (3) we have,  by item (b) of theorem \ref{teo3}, that $\phi_h$ and $\bar\phi_h$ are uniformly semiconcave in $h$. Let $\Lambda$ be the uniform Lipschitz constant. We claim that  each  $v(x)= v_h (x)$ that achieves the infimum in equation \eqref{forsub}  is uniformly bounded in $h$. Indeed,
$$|L(x,v(x))+\Hh_h|=|\frac{u(x)-u(x+hv)}{h}|\leq \Lambda |v(x)|,  $$ then, because the Lagrangian is superlinear and we have (a), we conclude that $|v(x)|\leq K$ for some constant $K$ that depends only on the Lagrangian $L$.

(b) Just note that $\phi_h$ and $\bar\phi_h$ are uniformly bounded, because they are limits of the functions $\phi_{\ep,h}$ that are uniformly bounded in $\ep$ and $h$, hence we can apply theorem 7.2 of \cite{Gom}.

(c) See theorems 7.3 and 7.4 of \cite{Gom}.
\end{proof}

\begin{theorem}\label{viscosity}Let $L(x,v)$ be a generic  Lagrangian  that satisfies hypothesis (1) to (3) above. Suppose that $\phi_{\epsilon,h}$ and $\bar \phi_{\epsilon,h}$ given by Theorem \ref{pfixo}  are uniformly bounded and satisfy \eqref{theta}. Then, through some subsequence, $$\phi_{\epsilon, h} \to \phi_0 \mbox{ \;\;\;\;and\;\;\;\; } \bar\phi_{\epsilon, h}\to\bar \phi_0.$$

\end{theorem}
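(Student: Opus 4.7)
The plan is to combine the compactness available from Theorem \ref{pfixo} with a passage to the limit in the fixed-point equation for $\Gg$, and then to invoke the uniqueness of viscosity solutions under the generic hypothesis. First, by Theorem \ref{pfixo} the family $\{\phi_{\epsilon,h}, \bar\phi_{\epsilon,h}\}$ has semiconcavity modulus uniformly bounded by $\bar C$, and by hypothesis it is uniformly bounded; hence the family is uniformly Lipschitz, in particular equicontinuous. Arzelà--Ascoli then yields a subsequence $(\epsilon_n, h_n) \to (0,0)$ along which $\phi_{\epsilon_n, h_n} \to \phi^*$ and $\bar\phi_{\epsilon_n, h_n} \to \bar\phi^*$ uniformly on $\tn$, with $\phi^*$ and $\bar\phi^*$ inheriting the same Lipschitz and semiconcavity bounds.

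Next I would pass to the limit in $\Gg[\phi_{\epsilon_n,h_n}] = \phi_{\epsilon_n,h_n} + \lambda_{\epsilon_n,h_n}$. Setting $f_{\epsilon,h}(x,v) := hL(x,v) + \phi_{\epsilon,h}(x+hv)$, hypotheses (1)--(2) give strict convexity and superlinearity of $f$ in $v$, so Laplace's method yields $-\epsilon h\,\ln\int e^{-f_{\epsilon,h}/(\epsilon h)}\,dv \to \inf_v f_{\epsilon,h}$ as $\epsilon h \to 0$, with remainder controlled uniformly via the semiconcavity and Lipschitz bounds. Combined with the uniform convergence of $\phi_{\epsilon_n,h_n}$ and with $\lambda_{\epsilon,h}/h \to \Hh_0$ (Theorems \ref{teo3}(a) and \ref{teo5}(a)), the joint limit gives the discrete calibration $\phi^*(x) = \inf_v\{hL(x,v) + \phi^*(x+hv)\} - h\Hh_0 + o(h)$. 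Dividing by $h$ and Taylor-expanding at differentiability points of the semiconcave $\phi^*$ (a full-measure set) produces $H(\nabla\phi^*, x) = -\Hh_0$ a.e., which standard semiconcave viscosity theory (cf.\ \cite{CS}) promotes to the viscosity solution property in the full sense. The analogous argument applied to $\bar\Gg$ identifies $\bar\phi^*$ as a viscosity solution of $H(-\nabla\bar\phi^*, x) = -\Hh_0$.

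Under the generic hypothesis, the viscosity solutions $\phi_0$ and $\bar\phi_0$ are unique up to additive constants (see the remark preceding Corollary \ref{bpeierls}), so $\phi^* = \phi_0 + c$ and $\bar\phi^* = \bar\phi_0 + \bar c$. The constants are pinned down by (i) the normalization $\phi_{\epsilon,h}(0)=0$ from the remark following Theorem \ref{pfixo}, which passes to $\phi^*(0)=0$, and (ii) the coupling \eqref{theta}, which as $\epsilon h \to 0$ forces $\min_x(\phi^*+\bar\phi^*)=0$ via Laplace asymptotics on $\tn$; invoking Corollary \ref{bpeierls} (so $\phi_0+\bar\phi_0 = h(x,x)\geq 0$ with equality on the projected Aubry set), these two conditions uniquely determine $c$ and $\bar c$, and therefore $\phi^* = \phi_0$ and $\bar\phi^* = \bar\phi_0$ along the chosen subsequence.

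The main obstacle is the uniform Laplace-type estimate in the second step. Since $\phi_{\epsilon,h}$ is only semiconcave rather than $C^2$, the usual Gaussian--Hessian expansion is not directly available, and one must control both the infimum and the width of the basin around it uniformly in $(\epsilon, h)$; the superlinearity estimate in hypothesis~(1) and the uniform semiconcavity bound $\bar C$ are precisely what makes this possible. A further subtlety is that the joint limit $\epsilon, h \to 0$ is not an iterated limit, so the error term $o(h)$ in the discrete calibration must be shown to be genuinely uniform in the subsequence along which the compactness step was performed, rather than being a rate obtained after first sending $\epsilon\to 0$.
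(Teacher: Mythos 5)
Your route is genuinely different from the paper's. The paper proves this theorem with a pure compactness/diagonal argument: by Theorem \ref{teo3}(b), for each fixed $h$ there is an $\epsilon$-subsequence with $\phi_{\epsilon,h}\to\phi_h$; by Theorem \ref{teo5}(b), there is an $h$-subsequence with $\phi_h\to\phi_0$; a Cantor-style diagonal over a nested family of $\epsilon$-subsets $\mathcal E_{h_i}\subset\cdots\subset\mathcal E_{h_1}$ then produces an index sequence $(\epsilon_i,h_i)$ along which $\phi_{\epsilon_i,h_i}\to\phi_0$. No Laplace asymptotics, no intermediate discrete calibration, no re-identification of the limit as a viscosity solution --- all of that is delegated to the already-proved theorems. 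Your plan instead extracts a joint Arzel\`a--Ascoli limit $\phi^*$ up front, re-derives the PDE it satisfies by passing to the limit in $\Gg[\phi_{\epsilon,h}]=\phi_{\epsilon,h}+\lambda_{\epsilon,h}$, and pins $\phi^*=\phi_0$ via uniqueness of viscosity solutions plus the two normalizations. Conceptually this is correct, and your constant-pinning step (using $\phi_{\epsilon,h}(0)=0$ and the Laplace limit of \eqref{theta} to force $\min(\phi^*+\bar\phi^*)=0$, consistent with Corollary \ref{bpeierls}) is sound.

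The genuine gap is precisely the one you flag at the end but do not close. The Laplace expansion of $-\epsilon h\ln\int e^{-g/(\epsilon h)}\,dv$ produces an error of order $\epsilon h\ln\bigl(1/(\epsilon h)\bigr)$; after dividing the fixed-point relation by $h$ to obtain the discrete calibration at scale $h$, the residual error is of order $\epsilon\ln(1/\epsilon)+\epsilon\ln(1/h)$. The first term vanishes, but the second does not along an arbitrary joint subsequence --- if, say, $h_n=e^{-1/\epsilon_n^2}$, then $\epsilon_n\ln(1/h_n)\to\infty$. An Arzel\`a--Ascoli extraction gives you no control over this ratio. So you cannot conclude that an arbitrary joint-limit function $\phi^*$ satisfies the calibration (and hence the PDE) without first restricting to subsequences along which $\epsilon_n\ln(1/h_n)\to0$. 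Since the theorem only asserts convergence ``through some subsequence,'' you are free to impose such a restriction, but you must do so explicitly \emph{before} invoking compactness, rather than after. The paper's diagonal construction builds exactly this adaptation of $\epsilon$ to $h$ into the extracted sequence automatically, which is why it avoids the issue entirely; your proof needs either that restriction added, or the Laplace error term estimated quantitatively under it, to be complete.
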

\begin{proof}

By item (b) of theorem \ref{teo5},   we know that  any collection $\overline{\{\phi_{ h}\}}_{h\in[0,1]}$ of solutions of the   $\epsilon=0$ problem  is a compact set, then
if we take a sequence $\{\phi_{ h_i}\}_{i\in\Nn}$ it has a subsequence that converges to  $\phi_0$, i.e., there exists a set $\mathcal H$ such that $$\lim_{h_i\in\mathcal H}\phi_{ h_i}=\phi_0.$$
 We know by theorem 39 of  \cite{GV}, for each $h_i\in\mathcal H$ fixed (as $\phi_{\ep,h_i}$ are normalized), that   $\overline{\{\phi_{\epsilon, h_i}\}}_{\epsilon\in[0,1]}$ is a compact set.
Then if we fix $h_1\in\mathcal H$ and a sequence $\{\phi_{\epsilon_i, h_1}\}_{i\in\Nn}$, then there exists a set $\mathcal E_{h_1}$ such that
$$\lim_{\epsilon_i\in\mathcal E_{h_1}}\phi_{\epsilon_i, h_1}=\phi_{h_1}.$$
%Now, if we fix $h_2\in\mathcal H$, the sequence $\{\phi_{\epsilon_i, h_2}\}_{\epsilon_i\in\mathcal E_{h_1}}$ has a subsequence that converges
%to a function $\phi_{h_2}$, and then we find a set $\mathcal E_{h_2}\subset\mathcal E_{h_1}$ such that

%$$\lim_{\epsilon_i\in\mathcal E_{h_2}}\phi_{\epsilon_i, h_2}=\phi_{h_2}.$$
Then, if we do this for each $h_i\in\mathcal H$,  we can find a set $\mathcal E_{h_i}\subset ...\subset\mathcal E_{h_2}\subset\mathcal E_{h_1}$.
Now we define a set $\mathcal E$ such that the i-th element of $\mathcal E$ is the i-th element of $\mathcal E_i$. The set $\mathcal E$ has the property that $$\lim_{\epsilon_i\in\mathcal E}\phi_{\epsilon_i, h_j}=\phi_{h_j} \mbox{ for each } h_j\in\mathcal H.$$
Finally, we have that
$$\lim_{i\to\infty}\phi_{\epsilon_i, h_i}=\phi_0.$$\end{proof}

\section{A large deviation principle:  $h$ fixed and $\epsilon\to 0$}

\begin{lemma}[Laplace Method] \label{laplace}

If  $f_k(x,v)\to f_0(x,v)$ uniformly  as $k\to 0$, then
for each $A\subset\tn\times\rn$  closed bounded set, we have $$\limsup_{k\to 0}\; k\ln\int_A e^{-\frac{f_k(x,v)}{k}}dx dv\leq -\inf_{A}f_0(x,v),$$
and for each $A\subset\tn\times\rn$ open bounded set, we have
$$\liminf_{k\to 0}\; k\ln\int_A e^{-\frac{f_k(x,v)}{k}}dx dv\geq - \inf_{A}f_0(x,v).$$

\end{lemma}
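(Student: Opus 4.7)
The plan is to handle the two inequalities separately by the standard Laplace-method argument, using uniform convergence in each direction. Throughout, I interpret $k \to 0$ as $k \to 0^+$ and use that the $f_k$ (and hence $f_0$) are continuous on the relevant sets, as will be the case in all applications in the paper.

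For the upper bound, let $A$ be closed and bounded and set $m := \inf_A f_0$. Fix $\delta>0$. Uniform convergence gives $k_0>0$ such that $f_k(x,v) \geq f_0(x,v) - \delta$ on $A$ for all $0<k<k_0$, hence $f_k(x,v) \geq m - \delta$ on $A$. Then
\[
\int_A e^{-f_k(x,v)/k}\,dx\,dv \;\leq\; |A|\, e^{-(m-\delta)/k},
\]
where $|A|$ denotes the Lebesgue measure of $A$, which is finite because $A$ is bounded. Taking $k\ln$ on both sides gives $k\ln\int_A e^{-f_k/k}\,dx\,dv \leq k\ln|A| - (m-\delta)$, and letting $k\to 0$ yields $\limsup_{k\to 0} k\ln\int_A e^{-f_k/k}\,dx\,dv \leq -(m-\delta)$. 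Sending $\delta\to 0$ closes the upper bound.

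For the lower bound, let $A$ be open and bounded and again set $m:=\inf_A f_0$. If $m=+\infty$, the statement is trivial, so assume $m<+\infty$. Fix $\delta>0$ and pick $(x_0,v_0)\in A$ with $f_0(x_0,v_0) < m+\delta$. Since $f_0$ is continuous (being the uniform limit of the continuous $f_k$) and $A$ is open, there is an open ball $B\subset A$ centered at $(x_0,v_0)$ of positive Lebesgue measure on which $f_0 < m+2\delta$. By uniform convergence, for $k$ small enough $f_k \leq f_0 + \delta \leq m+3\delta$ on $B$. Hence
\[
\int_A e^{-f_k(x,v)/k}\,dx\,dv \;\geq\; \int_B e^{-f_k(x,v)/k}\,dx\,dv \;\geq\; |B|\, e^{-(m+3\delta)/k}.
\]
Taking $k\ln$ and the $\liminf$ gives $\liminf_{k\to 0} k\ln\int_A e^{-f_k/k}\,dx\,dv \geq -(m+3\delta)$. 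Letting $\delta\to 0$ concludes the proof.

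There is no serious obstacle: the essential ingredients are uniform convergence of $f_k\to f_0$ (to swap $f_k$ for $f_0$ up to an arbitrarily small constant) and the openness/closedness of $A$ (to localize near the infimum for the lower bound, and to use the trivial upper bound $f_k\geq \inf_A f_0 - \delta$ everywhere on $A$ for the upper bound). The only minor point to watch is ensuring that $f_0$ is continuous so that sublevel neighborhoods in the lower bound exist; this follows automatically since in every application the $f_k$ will be continuous and uniform limits of continuous functions are continuous.
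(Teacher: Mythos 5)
Your proof is correct and is exactly the standard Laplace-method argument one would expect; the paper actually states this lemma without proof. You also rightly flag the one point that needs care (continuity of $f_0$, or at least that the infimum over the open set $A$ is approached on a set of positive Lebesgue measure, which is needed for the lower bound only), and this holds in every application in the paper since the $f_{\ep,h}$ there are continuous.
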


\bigskip

Let us define,

\[
f_{\epsilon,h}(x,v)=\frac{
\bar \phi_{\epsilon, h}(x)+\phi_{\epsilon, h}(x)}{h}
+
L(x,v)+\frac{\phi_{\epsilon,h}(x+hv)-\phi_{\epsilon,h}(x)}h-\overline{H}_{\epsilon,h},
\]
and
\[
I_h (x,v)=\frac{\bar\phi_h(x)+\phi_h(x+hv)}{h}+
L(x,v)-\overline{H}_{h},\mbox{\;\;\;\;\;  for any $ (x,v)  . $ }
\]

In order to have $I_h$ defined in a unique way we need the uniqueness
of  $\phi_h$ and $\bar\phi_h$.  In the last section we will show a sufficient condition to that.
%This will be proved (under our hypothesis) in the last section.

\begin{theorem}\label{ldph}

Consider $A\subset \tn\times\rn$ a closed (resp. open ) bounded set, then
$$\lim_{\epsilon \to 0} \epsilon \ln  \mu_{\epsilon,h}(A) =
\lim_{\epsilon \to 0} \epsilon \ln  \int_A
e^{-\, \frac{f_{\epsilon,h}(x,v)}{\epsilon } } dx dv\leq -\, \inf_{(x,v ) \in A } I_h (x,v)\,\, (\mbox{ resp. } \geq).  $$

\end {theorem}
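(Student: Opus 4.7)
The plan is to recognize that the minimizing density $\mu_{\epsilon,h}$ produced in Theorem \ref{teo2} can be rewritten in Gibbs form $e^{-f_{\epsilon,h}(x,v)/\epsilon}$, and then to invoke the Laplace Method (Lemma \ref{laplace}) after checking that $f_{\epsilon,h}$ converges uniformly on bounded sets to the rate function $I_h$ as $\epsilon\to 0$ (with $h$ fixed).

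First I would carry out the algebraic identification. Starting from
\[
\mu_{\epsilon,h}(x,v)=\theta_{\epsilon,h}(x)\,e^{-\frac{hL(x,v)+\phi_{\epsilon,h}(x+hv)-\phi_{\epsilon,h}(x)-\lambda_{\epsilon,h}}{\epsilon h}},
\]
substituting $\theta_{\epsilon,h}(x)=e^{-[\bar\phi_{\epsilon,h}(x)+\phi_{\epsilon,h}(x)]/(\epsilon h)}$ and using the identity $\overline H_{\epsilon,h}=\lambda_{\epsilon,h}/h$ from Theorem \ref{teo3}(a), the factors $\phi_{\epsilon,h}(x)/h$ telescope after dividing numerator and denominator in the exponent by $h$, and one obtains exactly $\mu_{\epsilon,h}(x,v)=e^{-f_{\epsilon,h}(x,v)/\epsilon}$. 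Integrating over $A$ yields the first equality in the statement.

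Next I would show that $f_{\epsilon,h}\to I_h$ uniformly on every bounded subset of $\tn\times\rn$ as $\epsilon\to 0$. By Theorem \ref{teo3}(a), $\overline H_{\epsilon,h}\to\overline H_h$; and by Theorem \ref{teo3}(b), through a subsequence, $\phi_{\epsilon,h}\to\phi_h$ and $\bar\phi_{\epsilon,h}\to\bar\phi_h$ uniformly on $\tn$. Because the family $\{\phi_{\epsilon,h}\}$ is uniformly Lipschitz (from the uniform semiconcavity and boundedness in Theorem \ref{pfixo}), the shifted map $(x,v)\mapsto\phi_{\epsilon,h}(x+hv)$ converges uniformly on bounded sets. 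The two $\phi_{\epsilon,h}(x)/h$ terms in $f_{\epsilon,h}$ cancel, so that
\[
\lim_{\epsilon\to 0} f_{\epsilon,h}(x,v)=\frac{\bar\phi_h(x)+\phi_h(x+hv)}{h}+L(x,v)-\overline H_h=I_h(x,v),
\]
uniformly on bounded sets.

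Finally I would apply Lemma \ref{laplace} with $k=\epsilon$, $f_k=f_{\epsilon,h}$, $f_0=I_h$: for $A$ closed bounded this yields $\limsup_{\epsilon\to 0}\epsilon\ln\mu_{\epsilon,h}(A)\leq -\inf_A I_h$, and for $A$ open bounded the corresponding $\liminf\geq -\inf_A I_h$. The main subtlety is that Theorem \ref{teo3}(b) provides convergence of the subactions only along a subsequence, so strictly speaking the symbol $\lim$ in the statement should be understood along such a subsequence, and a priori the limiting $I_h$ depends on the calibrated subactions $\phi_h$, $\bar\phi_h$ selected. The uniqueness of $I_h$ therefore reduces to uniqueness of the calibrated subactions for the discrete Aubry--Mather problem under the generic hypothesis, which is precisely the question addressed in the final section of the paper.
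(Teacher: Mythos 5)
Your proposal is correct and follows essentially the same route as the paper's own (very terse) proof: identify $\mu_{\epsilon,h}$ in Gibbs form $e^{-f_{\epsilon,h}/\epsilon}$, use the uniform convergence of $\phi_{\epsilon,h},\bar\phi_{\epsilon,h}$ from Theorem~\ref{teo3}(b) together with $\overline{H}_{\epsilon,h}\to\overline{H}_h$ to get $f_{\epsilon,h}\to I_h$ uniformly on bounded sets, and then apply Lemma~\ref{laplace}. You additionally make explicit the algebraic cancellation of the $\phi_{\epsilon,h}(x)/h$ terms and flag the subsequence/uniqueness issue, which the paper addresses elsewhere; the only minor quibble is that uniform Lipschitz control is not actually needed for the shifted map $(x,v)\mapsto\phi_{\epsilon,h}(x+hv)$ to converge uniformly, since uniform convergence on $\mathbb{T}^N$ already passes through composition with $x\mapsto x+hv$.
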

\begin{proof}

As for $h$ fixed, the convergence of $  \phi_{\epsilon,h} $ and $\bar  \phi_{\epsilon,h}$, with $\epsilon \to 0$,  to respectively,
$  \phi_{h} $ and $ \bar  \phi_{h} ,$ is uniform by item (b) of theorem \ref{teo3}. Then, the proof follows from the lemma \ref{laplace}  (Laplace method).
\end{proof}

\section{A large deviation principle: $\epsilon, h \to 0$}

%Note that  $\phi_{\epsilon, h} $ and $\bar \phi_{\epsilon, h} $ are determined up addition of a constant.
%We have  fixed    $
 %\phi_{\epsilon, h} $ and    $ \bar \phi_{\epsilon, h}$ such that

%$$\int  \theta_{\epsilon,h}(x)\, dx\,=\,
%\int e^{-(\frac{
%\bar \phi_{\epsilon, h}+\phi_{\epsilon, h}}{h \epsilon})  (x)} \, dx\,=\, 1.$$

%By theorem  \ref{pfixo} we have that   $\phi_{\epsilon,h}$ and
%$\bar \phi_{\epsilon,h}$ are uniformly semiconcave functions. Denote by  $x^*= x_{\epsilon,h}^* $, a point realizing the minimum of $\phi_{\epsilon,h} +\bar \phi_{\epsilon,h}$ , this point can be expressed also as    $x^* \in $ argmin $(\phi_{\epsilon,h} +\bar\phi_{\epsilon,h} )$.

%Then, for any  $x$
%$$ (\phi_{\epsilon,h} +\bar \phi_{\epsilon,h} )(x) \leq \min (\phi_{\epsilon,h} +\bar \phi_{\epsilon,h} )+ c |x - x^* |^2.$$

%Therefore,
%$$1= \int e^{-\frac{ (\phi_{\epsilon,h} +\bar \phi_{\epsilon,h}) (x)  }{\epsilon\, h}  } \, dx  \geq
%\int e^{-\frac{  c |x - x^* |^2 }{\epsilon\, h}  } \, dx  \, \,
%e^{-\frac{ \min   (\phi_{\epsilon,h} +\bar \phi_{\epsilon,h}) }{\epsilon\, h}}        . $$

%From this, it follows that
%$$ e^{\frac{ \min   (\phi_{\epsilon,h} +\bar \phi_{\epsilon,h}) }{\epsilon\, h}}         \geq \int_{\mathbb{R}^N} \,e^{-\frac{  c |x - x^* |^2 }{\epsilon\, h}  }
% \, \sim \, (\epsilon \, h)^{n/2}.$$

%Then,
%$$  \min   (\phi_{\epsilon,h} +\bar \phi_{\epsilon,h}) \,\geq c \, \epsilon \, h\, \ln ( \epsilon h).$$

Thanks to  \cite{FS} we can assume the Lagrangian $L$ we consider here satisfies the property that $I(x,v)=0$, if and
only if, $(x,v)$ is in the support of the Mather measure $\mu$.

Note that by theorem \ref{viscosity} there exists a  sequence $\{\epsilon_i,h_i\}_{i\in\nat}$ such that $\epsilon_i,h_i\to 0 $ and  $\displaystyle\lim_{i\to
  \infty} \phi_{\epsilon_i,h_i}=\phi_0$, and $\displaystyle\lim_{i\to\infty} \bar \phi_{\epsilon_i,h_i}=\bar \phi_0$.
For convenience we will write $\displaystyle\lim_{\epsilon,h\rightarrow0}$ when we
want to mean $\displaystyle\lim_{\epsilon_i,h_i\rightarrow0}$.

All the results that we will obtain  will be independent of the
particular sequence we choose, because $\phi_0$ and $\bar\phi_0$ are uniquely determined.

%Now we fix $x\in \mbox{dom}(\nabla \phi_0)$, we need the following Lemma:
\begin{theorem}
\label{auxlem}
 If $x\in\mbox{dom}(\nabla \phi_0)$, then we have
$$\lim_{\epsilon,h\rightarrow0}\frac{\phi_{\epsilon,h}(x+hv)-\phi_{\epsilon,h}(x)}{h}=\nabla\phi_0(x)(v),$$
uniformly in each closed bounded subset of
$\mbox{dom}(\nabla \phi_0)\times \rn$.
\end{theorem}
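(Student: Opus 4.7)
The key ingredients are the uniform semiconcavity of $\phi_{\epsilon,h}$ with modulus $\bar C$ (Theorem \ref{pfixo}), the uniform boundedness and uniform Lipschitz estimate that follow, and the uniform convergence $\phi_{\epsilon,h}\to\phi_0$ (Theorem \ref{viscosity}). My plan is to sandwich the difference quotient between two quantities involving the superdifferentials $\partial^+\phi_{\epsilon,h}$ evaluated at $x$ and at $x+hv$, and then to show that these superdifferentials collapse onto the singleton $\{\nabla\phi_0(x)\}$ in the joint limit $\epsilon,h\to 0$.

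First I pick $p_{\epsilon,h}\in\partial^+\phi_{\epsilon,h}(x)$ and $q_{\epsilon,h}\in\partial^+\phi_{\epsilon,h}(x+hv)$. The standard characterization of the superdifferential of a function with semiconcavity modulus $\bar C$ gives
$$\phi_{\epsilon,h}(x+hv)\leq \phi_{\epsilon,h}(x)+h\,p_{\epsilon,h}\cdot v+\tfrac{\bar C}{2}h^2|v|^2,$$
$$\phi_{\epsilon,h}(x)\leq \phi_{\epsilon,h}(x+hv)-h\,q_{\epsilon,h}\cdot v+\tfrac{\bar C}{2}h^2|v|^2,$$
so dividing by $h$ I obtain the sandwich
$$q_{\epsilon,h}\cdot v-\tfrac{\bar C h}{2}|v|^2\;\leq\; \tfrac{\phi_{\epsilon,h}(x+hv)-\phi_{\epsilon,h}(x)}{h}\;\leq\; p_{\epsilon,h}\cdot v+\tfrac{\bar C h}{2}|v|^2.$$

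Next I show that both $p_{\epsilon,h}$ and $q_{\epsilon,h}$ tend to $\nabla\phi_0(x)$ as $\epsilon,h\to 0$. The uniform Lipschitz property forces these superdifferential elements to remain in a fixed compact set, so any subsequence admits a further convergent subsequence $p_{\epsilon_n,h_n}\to p_*$. Setting $x_n=x$ (or $x_n=x+h_nv\to x$ for the $q$-case) and passing to the limit in the defining inequality $\phi_{\epsilon_n,h_n}(y)\leq \phi_{\epsilon_n,h_n}(x_n)+p_{\epsilon_n,h_n}\cdot(y-x_n)+\tfrac{\bar C}{2}|y-x_n|^2$, the uniform convergence $\phi_{\epsilon,h}\to\phi_0$ yields $p_*\in \partial^+\phi_0(x)$. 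Because $x\in\mbox{dom}(\nabla\phi_0)$ the superdifferential is a singleton, hence $p_*=\nabla\phi_0(x)$; since this limit is independent of the subsequence, the whole families $p_{\epsilon,h}$ and $q_{\epsilon,h}$ converge to $\nabla\phi_0(x)$, and the sandwich yields the pointwise conclusion.

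Finally, the uniformity on a closed bounded subset $K\subset\mbox{dom}(\nabla\phi_0)\times\rn$ follows by a contradiction argument: if it failed, there would exist $\delta>0$ and sequences $(\epsilon_n,h_n)\to 0$ and $(x_n,v_n)\in K$ along which the difference quotient deviates from $\nabla\phi_0(x_n)\cdot v_n$ by at least $\delta$. Compactness of $K$ yields a subsequential limit $(x_*,v_*)\in K$ with $x_*\in\mbox{dom}(\nabla\phi_0)$, and the continuity of $\nabla\phi_0$ on its domain of differentiability (a standard fact for semiconcave functions, \cite{CS}) gives $\nabla\phi_0(x_n)\to\nabla\phi_0(x_*)$; the superdifferential-convergence argument above, applied now along this subsequence, contradicts the deviation hypothesis. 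The main obstacle is the superdifferential convergence step, where one must carefully ensure that passing the limit through the moving base point $x+hv$ still recovers $\nabla\phi_0(x)$; this relies essentially on the singleton property of $\partial^+\phi_0$ at points of differentiability, which is exactly the hypothesis $x\in\mbox{dom}(\nabla\phi_0)$.
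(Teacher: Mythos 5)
Your proof is correct, and it takes a genuinely different (and arguably cleaner) route than the paper's. The paper starts from Proposition~\ref{clarke} (a mean-value theorem for semiconcave functions) to write the difference quotient \emph{exactly} as $p_{\epsilon,h}\cdot v$ for some $p_{\epsilon,h}\in D^+\phi_{\epsilon,h}(\xi)$ at an intermediate point $\xi\in\,]x,x+hv[$; then, rather than identifying the limit of $p_{\epsilon,h}$, it derives the contradiction by plugging in Proposition~\ref{desigsuperdif} with a free step $\lambda>0$ and sending $\lambda\downarrow 0$ at the end. You instead take superdifferential elements at the two endpoints $x$ and $x+hv$, use Proposition~\ref{desigsuperdif} twice to sandwich the difference quotient with an $O(h|v|^2)$ error, and then invoke the stability of $D^+$ under uniform convergence of uniformly semiconcave functions to conclude that every accumulation point of $p_{\epsilon,h}$ (resp.\ $q_{\epsilon,h}$) lies in $D^+\phi_0(x)=\{\nabla\phi_0(x)\}$. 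Your route buys transparency — it directly identifies the limit object — at the cost of importing the superdifferential stability lemma and the continuity of $\nabla\phi_0$ on its domain from \cite{CS}, whereas the paper's $\lambda$-limit trick sidesteps the need to name the limit of $p_{\epsilon,h}$ and is thus slightly more self-contained. Both hinge on the same inputs (uniform semiconcavity with modulus $\bar C$, uniform Lipschitz bound, uniform convergence $\phi_{\epsilon,h}\to\phi_0$, and the singleton property of $D^+\phi_0$ at differentiability points), and your uniformity-by-contradiction on a compact $K\subset\mbox{dom}(\nabla\phi_0)\times\rn$ is sound, since the limit point $(x_*,v_*)$ stays in $K$ by closedness and $\nabla\phi_0$ is continuous there.
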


%Postponing the proof of the Theorem \ref{auxlem}, we point out that, if $x\in \mbox{dom}(\nabla \phi_0)$, then
%\begin{equation}\label{2}
%\lim_{\epsilon,h\rightarrow0}L(x,v)+\frac{\phi_{\epsilon,h}(x+hv)-
%      \phi_{\epsilon,h}(x)}{h}-\overline{H}_{\epsilon,h}=L(x,v)+\nabla\phi_0(x)(v)-\overline{H}_{0}
%\end{equation}

To prove theorem \ref{auxlem} we need the following  properties of semiconcave functions
(see[CS, Ch.3]).
\begin{proposition}\label{clarke} Let $u:\tn\to\re$ be  a semiconcave function. Given $x,y\in\tn$ there exist $\xi\in ]x,y[$ and
  $p\in D^+u(\xi)$ such that $u(y)-u(x)=p\cdot (y-x)$, where $D^+u(x)$ is the
  superdifferential of $u$ at $x$.
\end{proposition}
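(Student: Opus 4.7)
My plan is to reduce the statement to a one-dimensional Rolle-type argument. I parametrize the segment by setting $g(t):=u(x+t(y-x))$ for $t\in[0,1]$ (lifting $u$ to $\rn$ if needed so the segment is meaningful). Since $u$ is semiconcave and affine precomposition preserves semiconcavity, $g$ is a semiconcave function of one variable. Consequently the one-sided derivatives $g'_{-}(t),g'_{+}(t)$ exist everywhere with $g'_{-}(t)\geq g'_{+}(t)$, and $D^{+}g(t)=[g'_{+}(t),g'_{-}(t)]$. I then form the chord-adjusted function $\phi(t):=g(t)-g(0)-t(g(1)-g(0))$, which inherits semiconcavity and satisfies $\phi(0)=\phi(1)=0$.

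Next I will locate an interior point $t^{*}\in(0,1)$ at which $g(1)-g(0)\in D^{+}g(t^{*})$. If the maximum of $\phi$ on $[0,1]$ is attained at some $t^{*}\in(0,1)$, the necessary conditions for an interior local max give $\phi'_{+}(t^{*})\leq 0\leq \phi'_{-}(t^{*})$, so $0\in D^{+}\phi(t^{*})$ and therefore $g(1)-g(0)\in D^{+}g(t^{*})$. Otherwise the max lies on the boundary and $\phi\leq 0$ on $[0,1]$; if $\phi\equiv 0$ then $g$ is affine and any $t^{*}\in(0,1)$ works, while if $\phi$ is strictly negative somewhere it attains an interior minimum at some $s^{*}\in(0,1)$. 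At this interior minimum of a semiconcave function the one-sided conditions $\phi'_{-}(s^{*})\leq 0\leq \phi'_{+}(s^{*})$ combine with the semiconcavity inequality $\phi'_{-}(s^{*})\geq \phi'_{+}(s^{*})$ to pinch both one-sided derivatives to zero, so again $g(1)-g(0)\in D^{+}g(s^{*})$.

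Finally I will transport the one-dimensional supergradient back to $D^{+}u$ at $\xi:=x+t^{*}(y-x)\in\,]x,y[$. The inclusion $\{p\cdot(y-x):p\in D^{+}u(\xi)\}\subseteq D^{+}g(t^{*})$ is immediate from testing the definition of $D^{+}u(\xi)$ along points $\xi+s(y-x)$. For the reverse inclusion I will invoke the standard facts from Cannarsa-Sinestrari, Chapter 3: $D^{+}u(\xi)$ is a nonempty, convex, compact set, and the one-sided directional derivatives of $u$ at $\xi$ in the directions $\pm(y-x)$ equal respectively $\min$ and $\max$ of $p\cdot(y-x)$ over $p\in D^{+}u(\xi)$. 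By convexity of $D^{+}u(\xi)$, the image of the linear functional $p\mapsto p\cdot(y-x)$ is precisely the closed interval $[g'_{+}(t^{*}),g'_{-}(t^{*})]=D^{+}g(t^{*})$, so there exists $p\in D^{+}u(\xi)$ with $p\cdot(y-x)=g(1)-g(0)=u(y)-u(x)$.

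The main obstacle is this last chain-rule identification $D^{+}g(t^{*})=\{p\cdot(y-x):p\in D^{+}u(\xi)\}$, whose nontrivial direction hinges on the characterization of one-sided directional derivatives of semiconcave functions as extrema over the superdifferential; everything else is a routine Rolle-type analysis adapted to one-sided derivatives.
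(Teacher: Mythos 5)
Your proof is correct. The paper does not supply its own argument for this proposition but cites it from Cannarsa--Sinestrari [CS, Ch.~3], and your proof reconstructs precisely the standard mean value theorem argument from that reference: reduce to the one-dimensional semiconcave restriction $g(t)=u(x+t(y-x))$, apply a Rolle-type argument with one-sided derivatives to the chord-adjusted function $\phi$, and transport the resulting one-dimensional supergradient back to $D^+u(\xi)$ via the identity $D^+g(t)=[g'_+(t),g'_-(t)]=\{p\cdot(y-x):p\in D^+u(\xi)\}$, whose nontrivial inclusion rests on the characterization of one-sided directional derivatives of a semiconcave function as extrema of $p\cdot\theta$ over the (nonempty, compact, convex) superdifferential. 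The case analysis (interior max; $\phi\equiv 0$; interior min with the pinch $\phi'_-(s^*)\leq 0\leq\phi'_+(s^*)\leq\phi'_-(s^*)$) is complete and the pinch correctly uses semiconcavity, so no gaps.
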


% \noindent See[CS, Th.3.1.13]

% \noindent See[CS, Th.3.3.6]
\begin{proposition} \label{desigsuperdif} Let $u:\tn\to\re$ be a
  semiconcave function with semiconcavity modulus $C$, and let
  $x\in\tn$. Then, a vector $p\in\rn$ belongs to $D^+u(x)$ if and only
  if
$$ u(y)-u(x)\leq p\cdot(y-x)+\frac{C}{2}|y-x|^2  $$ for any point $y\in\tn$.

\end{proposition}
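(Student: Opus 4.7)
The statement is the standard global quadratic upper bound characterization of the superdifferential of a semiconcave function, so my plan is the classical one: reduce to the purely concave case by subtracting the quadratic $\frac{C}{2}|\cdot|^2$, and then invoke the known characterization of superdifferentials of concave functions.

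First, lift $u$ to a $\Zz^N$-periodic function on $\rn$ (still denoted $u$) so that the semiconcavity estimate $u(x+z)+u(x-z)-2u(x)\leq C|z|^2$ holds on all of $\rn$. Define $w(y):=u(y)-\tfrac{C}{2}|y|^2$. A direct computation shows $w(y+z)+w(y-z)-2w(y)\leq 0$ for all $y,z\in\rn$, so $w$ is concave on $\rn$. Moreover, since $\tfrac{C}{2}|\cdot|^2$ is $C^1$ with gradient $Cy$ at $y$, one checks from the definition of $D^+$ that $p\in D^+u(x)$ if and only if $p-Cx\in D^+w(x)$.

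Next I would prove the easy (``if'') direction directly from the definition. If $u(y)-u(x)\leq p\cdot(y-x)+\tfrac{C}{2}|y-x|^2$ for every $y$, then
\[
\limsup_{y\to x}\frac{u(y)-u(x)-p\cdot(y-x)}{|y-x|}\leq \limsup_{y\to x}\frac{C}{2}|y-x|=0,
\]
which is exactly $p\in D^+u(x)$.

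For the (``only if'') direction, assume $p\in D^+u(x)$, equivalently $q:=p-Cx\in D^+w(x)$ with $w$ concave. I invoke the classical fact for concave functions: a vector $q$ lies in $D^+w(x)$ if and only if $w(y)\leq w(x)+q\cdot(y-x)$ for every $y\in\rn$. (This is the defining property of the superdifferential of a concave function; it follows because the function $t\mapsto w(x+t(y-x))$ is concave in $t$, so its right-derivative at $0$ dominates the difference quotient at $t=1$, and $D^+w(x)$ coincides with the set of such supporting slopes.) Plugging back $w=u-\tfrac{C}{2}|\cdot|^2$ and $q=p-Cx$ gives
\[
u(y)-\tfrac{C}{2}|y|^2\leq u(x)-\tfrac{C}{2}|x|^2+(p-Cx)\cdot(y-x),
\]
and rearranging (using $|y|^2-|x|^2-2x\cdot(y-x)=|y-x|^2$) yields exactly
\[
u(y)-u(x)\leq p\cdot(y-x)+\tfrac{C}{2}|y-x|^2.
\]

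The only delicate point is the reduction to the concave case on the torus: because $w$ is not periodic, one cannot regard it as living on $\tn$, but the statement and the superdifferential are local in $x$, so working on the universal cover $\rn$ is legitimate and the inequality obtained there descends, since for $y$ within the injectivity radius the torus distance equals $|y-x|$ and by periodicity one can replace any $y\in\tn$ by its nearest lift. This mild bookkeeping is the only point requiring care; the rest is purely the concave-function calculation.
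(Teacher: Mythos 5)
Your proof is correct, but note that the paper does not actually prove this proposition: it merely states it and refers the reader to Cannarsa--Sinestrari (cited as [CS, Ch.~3]). Your argument --- subtracting the quadratic $\tfrac{C}{2}|\cdot|^2$ to reduce to the concave case, using the supporting-hyperplane characterization of $D^+$ for concave functions, and then undoing the change of variables via the identity $|y|^2-|x|^2-2x\cdot(y-x)=|y-x|^2$ --- is the standard textbook proof and is essentially what appears in [CS], so there is nothing in the paper to compare against. The one point you flag yourself (lifting to $\Rr^N$ because $w$ is not periodic, and then descending by choosing the nearest lift) is indeed the only bookkeeping needed, and you handle it adequately.
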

% \noindent See[CS, Th.3.3.1]

\begin{proof}
{{\textit{(of Theorem \ref{auxlem})}}} By Theorem \ref{pfixo},
the functions
$\phi_{\epsilon,h}$
are  semiconcave with semiconcavity modulus uniformly bounded
by some constant $\bar C$. Let $\{\epsilon_i,h_i\}_{i\in \mathbb{N}}$ be a
sequence  such that
$\phi_{\epsilon_i,h_i}\to \phi_0$.

Let $K$ be a closed bounded subset of $\mbox{dom}(\nabla \phi_0)\times\rn$.  Hence, by propositions
$\ref{clarke}$ and $\ref{desigsuperdif}$, for each $(x,v)\in K$, and each
$\epsilon_i$ and $h_i$ there exist
$\xi_{\epsilon_i,h_i}(x,v)\in]x,x+h_iv[$\; and $p_{\epsilon_i,h_i}\in
D^+\phi_{\epsilon_i,h_i}(\xi_{\epsilon_i,h_i}(x,v))$, such that
$$\frac{\phi_{\epsilon_i,h_i}(x+h_iv)-\phi_{\epsilon_i,h_i}(x)}{h_i}
=p_{\epsilon_i,h_i}\cdot
v.$$ Then, in order to prove the lemma it is enough show that
$$\lim_{\epsilon_i,h_i\rightarrow0}p_{\epsilon_i,h_i}\cdot
v=\nabla\phi_0(x)(v)\;\;\;\;\; \mbox{ for all } (x,v)\in K,$$ i.e.,
given $\zeta>0$ we need to find $i_0\in\nat$ such that for each $i>i_0$ and $(x,v)\in K$ we have
\begin{itemize}
\item[(i)]$\nabla\phi_0(x)(v)\leq p_{\epsilon_i,h_i}\cdot v+\zeta$
\item[(ii)]$ \nabla\phi_0(x)(-v)\leq -p_{\epsilon_i,h_i}\cdot v+\zeta$.
\end{itemize}

Firstly, we will show that there exists $i_0$, such that the first
inequality holds for every $i>i_0$, and every $(x,v)\in K$.
Arguing by contradiction, we suppose that there is no $i_0>0$,
with the specified properties. Then there exists a sequence
$\{(x_n,v_n)\}$, and subsequences $ \{h_n\}, \{\epsilon_n\}$ of $
\{h_i\}, \{\epsilon_i\}$, such that
\begin{equation}\label{desig1} \nabla\phi_0(x_n)(v_n)>
  p_{\epsilon_n,h_n}\cdot v_n+\zeta,
\end{equation}
where $p_{\epsilon_n,h_n}\in D^+\phi_{\epsilon,h}(\xi_n)$ and
$\xi_n:=\xi_{\epsilon_n,h_n}(x_n,v_n)\in]x_n,x_n+h_nv_n[$ are given by
Proposition \ref{clarke}.  Passing to a subsequence, if necessary, we
can suppose that the sequence $\{(x_n,v_n)\}$ converges to a point
$(x,v)$ of K, then $\{\xi_n\}$ converges to $x$.  Now, by Proposition
\ref{desigsuperdif}, we have that, for any $\lambda>0$
\begin{equation}\label{desig2} p_{\epsilon_n,h_n}\cdot v_n \geq
  \frac{\phi_{\epsilon_n,h_n}(\xi_n+\lambda
    v_n)-\phi_{\epsilon_n,h_n}(\xi_n)}{\lambda}
  -\frac{\bar C}{2}\lambda|v_n|^2
\end{equation}
%Above we are using the fact that there is a uniform constant of semiconcavity.

Note that $\phi_{\epsilon_n,h_n}\to\phi_0$ when $n\to\infty$
uniformly, $\xi_n\to x$ and $\nabla\phi_0$ is continuous in $\mbox{dom}(\nabla \phi_0)$.
Then
by equations (\ref{desig1}) and (\ref{desig2}), we have that
\begin{align*}
&\lim_n\nabla\phi_0(x_n)(v_n)\geq \liminf_n p_{\epsilon_n,h_n}\cdot v_n+\zeta\\
&\qquad\geq \lim_n \frac{\phi_{\epsilon_n,h_n}(\xi_n+\lambda v_n)-\phi_{\epsilon_n,h_n}(\xi_n)}{\lambda} -\frac{\bar C}{2}\lambda|v_n|^2 +\zeta,
\end{align*}hence
$$\nabla\phi_0(x)(v)\geq   \frac{\phi_0(x+\lambda v)-\phi_0(x)}{\lambda} -\frac{\bar C}{2}\lambda|v|^2 +\zeta,\;\;\;\; \mbox{ for all } \lambda>0.$$
Then
$$\nabla\phi_0(x)(v)\geq \lim_{\lambda\downarrow 0}  \frac{\phi_0(x+\lambda v)-\phi_0(x)}{\lambda} -\frac{\bar C}{2}\lambda|v|^2 +\zeta=\nabla\phi_0(x)(v)+\zeta,$$ and this is a contradiction. Repeating the argument with $v$ replaced
by $-v$ yields the other inequality.
%\cqd
\end{proof}

\begin{theorem} \label{supp}
Consider $\phi_0$ and $\bar \phi_0$  the functions given by theorem \ref{viscosity} and denote by $\mu$ the Mather measure for $L$.
Then,
$$
\pi_1(\supp(\mu)) =  \{x\,:\, \phi_0(x) +
\bar \phi_0  (x) =0\},$$ where $\pi_1$ is the canonical projection on
the $x$ coordinate.
\end{theorem}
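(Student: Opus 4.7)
The plan is to assemble three facts already available in the paper or in the cited literature, reducing the statement to a short chain of identifications. First, I would invoke Corollary \ref{bpeierls}: under the generic assumption on $L$, one has $\phi_0(x)+\bar\phi_0(x)=h(x,x)$ for every $x\in\tn$, where $h$ is the Peierls barrier. Second, I would use the classical weak-KAM characterization of the projected Aubry set as the zero level set of the diagonal Peierls barrier, $\mathcal{A}(L)=\{x\in\tn : h(x,x)=0\}$, a standard result (see, e.g., \cite{Fa, CI}). Third, I would apply the standing generic hypothesis from Section \ref{epmp}, namely $\supp(\mu)=\hat{\mathcal{A}}(L)$, which gives $\pi_1(\supp(\mu))=\mathcal{A}(L)$ after projecting.

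Chaining these three identifications produces the equality
\[
\pi_1(\supp(\mu))=\mathcal{A}(L)=\{x\in\tn : h(x,x)=0\}=\{x\in\tn : \phi_0(x)+\bar\phi_0(x)=0\},
\]
which is exactly the statement of the theorem. Thus the proof is essentially an assembly argument, with no delicate estimates required beyond what has already been collected in Theorems \ref{teo3} and \ref{teo5} and the remark leading to Corollary \ref{bpeierls}.

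The main point that requires care, and the step I expect to be the chief obstacle, is verifying that the $\bar\phi_0$ we use here -- the unique viscosity solution of $H(-\nabla\bar\phi,x)=-\Hh_0$ in the optimal-control sign convention adopted throughout the paper -- really coincides with the negative weak-KAM solution whose sum with the positive weak-KAM solution $-\phi_0$ recovers the Peierls barrier on the diagonal via Fathi's Corollary 5.3.7. This translation between sign conventions is precisely what the remark preceding Corollary \ref{bpeierls} is designed to carry out; once it is acknowledged, the chain above closes without further work. A more self-contained alternative, which I would mention but not pursue in detail, is to argue directly: the inclusion $\pi_1(\supp(\mu))\subseteq\{\phi_0+\bar\phi_0=0\}$ follows from invariance of $\mu$ together with simultaneous calibration of $\phi_0$ and $\bar\phi_0$ along Mather orbits (so $\phi_0+\bar\phi_0$ is constant on such orbits, and recurrence plus nonnegativity of the sum forces the constant to vanish), while the reverse inclusion requires identifying zero-sum points with projections of calibrated orbits in $\supp(\mu)$ -- which again reduces to the Aubry-set characterization of the first route.
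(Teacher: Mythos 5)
Your proposal is correct and follows essentially the same route as the paper: the paper's proof is a one-line appeal to Corollary \ref{bpeierls} combined with the standard characterization of the projected Aubry set as the zero level of the diagonal Peierls barrier, and the generic assumption identifying $\supp(\mu)$ with the Aubry set. The sign-convention caveat you flag is exactly what the remark before Corollary \ref{bpeierls} settles, so the argument closes as you describe.
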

\begin{proof}
This follows by the corollary \ref{bpeierls} (as the Lagrangian is generic), because
 the Peierls barrier $h(x,x)=0$, if and only if, $x$ is in the
projection of the support of the Mather measure (the projected Aubry set).
\end{proof}

\begin{theorem}\label{limite}
Let us fix two  sequences $\{\ep_n\}, \{h_n\} $ such that $h_n\geq \epsilon_n $, $\mu_{\epsilon_n, h_n }\rightharpoonup \mu, \overline{H}_{\ep_n,h_n}\to\Hh_0, \phi_{\epsilon_n, h_n}\to\phi_0  $ and $\bar\phi_{\epsilon_n, h_n}\to\bar\phi_0  $ . To simplify the notation we will denote by $\mu_n=\mu_{\epsilon_n, h_n },\,\overline{H}_n=\overline{H}_{\ep_n,h_n} $, $\phi_n=\phi_{\epsilon_n, h_n} $ and $\bar\phi_n=\bar\phi_{\epsilon_n, h_n} $. Then, we have that

 (a)\; $\displaystyle\liminf_{n\to\infty}\frac{\bar \phi_n(x)+\phi_n(x)}{h_n}\geq 0,  \;\;\; \;\;\;\forall x\in\tn    ,$

 \bigskip

(b) \;  $\displaystyle\lim_{n\to\infty} \frac{\bar \phi_{n}  (x) +\phi_{n}  (x) }{h_n\,} =\infty, \;\;\;\mbox{ if }\;\;\;x\notin \pi_1(\supp(\mu)),$

 \bigskip

(c)  \;   $\displaystyle\limsup_{n\to\infty}\; \inf_{x\in B_{x_0}(r)}\frac{\bar \phi_{n}  (x_0) +\phi_{n}  (x_0) }{h_n\,} =0,\;\;\;  \mbox{ if }\;\;\; x_0\in\pi_1(\supp(\mu))$, for all $r>0$.
\end{theorem}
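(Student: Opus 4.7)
The three statements hinge on the normalization \eqref{theta}, the uniform Lipschitz bound $\Lambda$ on $\phi_n$ and $\bar\phi_n$ (a consequence of the uniform semiconcavity in Theorem \ref{pfixo}), and the uniform convergences $\phi_n\to\phi_0$, $\bar\phi_n\to\bar\phi_0$. Part (b) is immediate: if $x_0\notin\pi_1(\supp\mu)$, Theorem \ref{supp} yields $\phi_0(x_0)+\bar\phi_0(x_0)>0$, so by uniform convergence $(\bar\phi_n+\phi_n)(x_0)\to\phi_0(x_0)+\bar\phi_0(x_0)>0$, and dividing by $h_n\downarrow 0$ gives $+\infty$.

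For (a), I would argue by contradiction. Assume $\liminf_n(\bar\phi_n+\phi_n)(x_0)/h_n<-c$ for some $c>0$ and some $x_0\in\tn$. Along the corresponding subsequence, $(\bar\phi_n+\phi_n)(x_0)<-ch_n/2$, and the uniform Lipschitz bound yields $(\bar\phi_n+\phi_n)(y)<-ch_n/4$ on the ball $B_{x_0}(\rho_n)$ with $\rho_n:=ch_n/(8\Lambda)$. Then
\[\int_{\tn}e^{-(\bar\phi_n+\phi_n)(x)/(\ep_n h_n)}\,dx\;\geq\;C\rho_n^N\,e^{c/(4\ep_n)}\;\geq\;C'h_n^N\,e^{c/(4\ep_n)},\]
and since $h_n\geq\ep_n$, this is at least $C'\ep_n^N e^{c/(4\ep_n)}\to\infty$, contradicting \eqref{theta}.

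Part (c) requires both bounds on the limsup. For $\limsup\geq 0$, I would apply the blowup argument of (a) uniformly: if $\inf_{x\in\tn}(\bar\phi_n+\phi_n)/h_n<-c$ along a subsequence, compactness of $\tn$ produces $x_n\in\tn$ attaining the infimum, and the ball argument centered at $x_n$ yields the same divergent integral. Hence $\liminf_n\inf_{\tn}(\bar\phi_n+\phi_n)/h_n\geq 0$, so $\limsup_n\inf_{B_{x_0}(r)}(\bar\phi_n+\phi_n)/h_n\geq 0$. For $\limsup\leq 0$, I would exploit that the $x$-marginal density of $\mu_n$ is $\theta_n(x)=e^{-(\bar\phi_n+\phi_n)(x)/(\ep_n h_n)}$ (from the proof of Theorem \ref{teo2}), so $\mu_n\rightharpoonup\mu$ implies $\theta_n\,dx\rightharpoonup\pi_1\mu$. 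Since $x_0\in\pi_1(\supp\mu)\subseteq\supp(\pi_1\mu)$, the Portmanteau theorem gives $\liminf_n\int_{B_{x_0}(r)}\theta_n\,dx\geq(\pi_1\mu)(B_{x_0}(r))>0$. If $\inf_{B_{x_0}(r)}(\bar\phi_n+\phi_n)\geq ch_n$ held for some $c>0$ along a subsequence, then $\theta_n\leq e^{-c/\ep_n}$ on $B_{x_0}(r)$ would force the integral to vanish---a contradiction.

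The principal difficulty is the accounting in the ball-blowup step: the volume factor $h_n^N$ must be overwhelmed by $e^{c/(4\ep_n)}$ as both parameters vanish. This is precisely where the hypothesis $h_n\geq\ep_n$ enters, ensuring $h_n^N e^{c/(4\ep_n)}\geq\ep_n^N e^{c/(4\ep_n)}\to\infty$ and making the contradiction robust under the simultaneous limit.
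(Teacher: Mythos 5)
Your proof is correct, and parts (a) and (b) track the paper's own argument: for (a) both you and the authors blow up a small ball of radius comparable to $h_n$ around a putative bad point, use the uniform Lipschitz constant to spread the negativity over that ball, and contradict the normalization \eqref{theta}, with $h_n\geq\epsilon_n$ entering exactly where you say it does; for (b) both invoke Theorem \ref{supp} together with the sign information given by (a). (One small gloss in (b): Theorem \ref{supp} alone only gives $\phi_0(x)+\bar\phi_0(x)\neq 0$ off $\pi_1(\supp\mu)$; the strict positivity you assert is obtained by combining this with (a), or equivalently with the Peierls-barrier identity of Corollary \ref{bpeierls}.)

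Where you genuinely depart from the paper is in the ``$\limsup\leq 0$'' half of (c), and your route is cleaner. The paper proves a technical Claim---that $L(x,v)+\frac{\phi_n(x+h_nv)-\phi_n(x)}{h_n}-\overline H_n>-\zeta$ on a neighborhood of a support point $(x_0,v_0)$---using the uniform semiconcavity of $\phi_n$, Propositions \ref{clarke} and \ref{desigsuperdif}, and the convergences $\phi_n\to\phi_0$, $\overline H_n\to\Hh_0$; it then contradicts $\mu_n(B)>\delta_B$ for a phase-space neighborhood $B$. You instead observe that the fixed-point equation $\Gg[\phi_n]=\phi_n+\lambda_n$ forces $\int_{\rn}e^{-\frac{h_nL(x,v)+\phi_n(x+h_nv)-\phi_n(x)-\lambda_n}{\epsilon_n h_n}}\,dv=1$, so the $x$-marginal of $\mu_n$ is precisely $\theta_n\,dx$; then $\mu_n\rightharpoonup\mu$ pushes forward to $\theta_n\,dx\rightharpoonup\pi_{1*}\mu$, and Portmanteau on the open ball $B_{x_0}(r)$ (which carries positive $\pi_{1*}\mu$-mass since $x_0\in\pi_1(\supp\mu)\subset\supp(\pi_{1*}\mu)$) does the rest. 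This sidesteps all the semiconcavity machinery for this particular step. The trade-off is that the paper's Claim and its surrounding estimates are reused in the proof of the next theorem (the bound $\epsilon\ln\mu_{\epsilon,h}(A)\leq-\inf_A I$), so the paper is not wasting effort; your argument is more economical if one only wants Theorem \ref{limite} itself.
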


\begin{proof} (a)   Suppose  by contradiction
  that  there exists $x\in \tn$ such that for a subsequence $\displaystyle\lim_{j\to\infty}\frac{\bar \phi_{n_j}(x)+\phi_{n_j}(x)}{h_{n_j}}=c< 0$, then there exists a neighborhood $V$ of  $x$ of diameter $\bar c h_{n_j}$, where $\bar c$ is a constant,  such that  $\frac{\bar \phi_{n_j}(y)+\phi_{n_j}(y)}{h_{n_j}}\leq c/2$ for all $y\in V$. Then

$$\displaystyle e^{-\frac{c}{2\;\epsilon_{n_j}}}\int_{V}dx\leq \int_{V} e^{-\frac{1}{\epsilon_{n_j}}\frac{
\bar \phi_{n_j}(x)+\phi_{n_j}(x)}{h_{n_j}}} \, dx<\displaystyle\int_{\tn} e^{-\frac{1}{\epsilon_n}\frac{
\bar \phi_n(x)+\phi_n(x)}{h_n}} \, dx\,=\, 1.$$ But, $e^{-\frac{c}{2\;\epsilon_{n_j}}} (\bar c h_{n_j})^N\to\infty$ when $n_j\to \infty $, then  we get a contradiction, as $c<0$.

\medskip

(b) It follows by item (a) and theorem \ref{supp}.

\medskip

(c) First, we fix a  point  $(x_0,v_0)$ in the support of $\mu$ and let $B$ be a small
neighborhood of $(x_0,v_0)$ in the phase space.  As $\mu_n\rightharpoonup \mu $ there exists $n_0\in\nat$ such that if $n\geq n_0$ then
\begin{equation}\label{desig}1> \int_B e^{-\frac{1}{\epsilon_n}( \,       \frac{
\bar \phi_n  (x) +\phi_n  (x) }{h_n\,
 }\,\,
+\,\,\frac{
h_n\, L(x,v)+ \phi_n(x+h_nv)-\phi_n(x) -h_n\,\overline{H}_n}{
 \, h_n        }\,)\, }   \, dx dv\, \,>\delta_B>0\end{equation}
for some positive $\delta_B$.

\medskip

\textbf{{Claim:}} Given $\zeta>0$ there exists $\bar n\in \nat$ and a neighborhood $B$ of $(x_0,v_0)$ such that, if $(x,v)\in B$ and $n>\bar n $ then $\displaystyle L(x,v)+ \frac{\phi_n(x+h_nv)-\phi_n(x)}{ h_n} -\overline{H}_n  >-\zeta$.

\medskip

 We postpone the proof of the claim. Suppose by contradiction that
 \[\limsup_{n\to\infty}\;\inf_{x\in B_{x_0}(r)}\frac{\bar \phi_n(x)+\phi_n(x)}{h_n}=c> 0,
 \] then there exists a subsequence such that $\displaystyle\lim_{j\to\infty}\;\inf_{x\in B_{x_0}(r)}\frac{\bar \phi_{n_j}(x)+\phi_{n_j}(x)}{h_{n_j}}=c$. Then, there exists $j_0$ such that for  $j>j_0$ we have \begin{equation}\label{bola}\frac{\bar \phi_{n_j}(x)+\phi_{n_j}(x)}{h_{n_j}}>\frac{c}{2}\;\;\;\;\;\;\;\forall x\in B_{x_0}(r)\end{equation} Let $\tilde B=B_{x_0}(r)\times \rn.$
 Now using the claim with $\zeta<c/4$,  let $B$   be the neighborhood in the claim. Take $\hat B= B\cap \tilde B$, jointing the inequalities \eqref{bola} and that of the claim   we have a contradiction, when $\ep_{n_j}\to 0$, with the inequality (\ref{desig}). This  proves (c).

 \medskip

 \textit{Proof of the claim:} Let $\bar C$ be the semiconcavity bound of the functions $\phi_{\ep,h}$. For $\zeta>0, \eta>0$ there exists $\lambda>0$ such that

  $\frac{\bar C}2(|v_0|+\eta)^2\lambda<\zeta $ \,\,and\,\,
 $\displaystyle\left|\frac{\phi_0(x_0+\lambda v_0)-\phi_0(x_0)}{\lambda}-\nabla\phi_0(x_0)(v_0)\right|<\zeta$.

  As $\phi_n\to\phi_0$ uniformly in $x$, there exists $n_2$ such that if $n>n_2$, then we have $|\phi_0(x)-\phi_n(x)|<\zeta \lambda$, for all $x\in\tn$. Also there exists a neighborhood $B_{\lambda}$ of $(x_0,v_0)$ such that, if $(x,v)\in B_{\lambda}$ and $n>n_2$, then $$\left|\frac{\phi_n(x+\lambda v) -\phi_n(x)-\phi_0(x_0+\lambda v_0)+ \phi_0(x_0)}{\lambda}\right|<6\zeta. $$  There exists $n_3$ such that if $n>n_3$ and $(x,v)\in B_{\lambda}$ (choosing  $B_{\lambda}$ smaller if necessary) such that $|L(x_0,v_0)-L(x,v)-\Hh_0+\Hh_n|<\zeta$ and $|v-v_0|<\eta$.

By propositions \ref{clarke} and \ref{desigsuperdif} we have that
$$\frac{\phi_n(x+h_nv)-\phi_n(x)}{ h_n}=p_n(x_n)\cdot v\geq \frac{\phi_n(x_n+\lambda v)-\phi_n(x_n)}{ \lambda}-\frac {\bar C}2\lambda |v|^2,$$ where $x_n\in]x,x+h_n v[$, therefore there exists $n_4$ such that if $n>n_4$ then $(x_n,v)\in B_{\lambda}$.

Now, define $\bar n=\max\{n_2,n_3, n_4\}$,  collecting all the above inequalities, for any $n>\bar n$ and $(x,v)\in B_{\lambda}$,  we get
$$ \displaystyle L(x,v)+ \frac{\phi_n(x+h_nv)-\phi_n(x)}{ h_n} -\overline{H}_n  > L(x_0,v_0)+\nabla\phi_0(x_0)(v_0)-\Hh_0-9\zeta>-9\zeta,$$ which proves the claim.\end{proof}

Let us define the deviation function $I$ by
$$I(x,v)=L(x,v)+\nabla\phi_0(x)(v)-\overline{H}_{0}.$$

We remember the reader  that we just consider $I(x,v)$ for the points $x$ where $\nabla\phi_0(x)$ is defined. For the others points $x$ we declare $I(x,v)=\infty$.

\begin{proposition} Let $\phi_0$ be a   viscosity solution to
$
H(\nabla\phi_0(x),x)=-\overline H_0.
$

If $(x,v)\in \supp (\mu)$ then
  $\nabla\phi_0(x)(v)+L(x,v)=\overline H_0$.
\end{proposition}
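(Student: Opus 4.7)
The plan is to establish the equality on $\supp(\mu)$ by combining a pointwise inequality with an integration argument: first produce
\[
L(x,v)+\nabla\phi_0(x)\cdot v-\overline{H}_0\geq 0
\]
everywhere $\nabla\phi_0$ is defined, then show that its $\mu$-integral is zero, and finally upgrade from ``$\mu$-a.e.\ equality'' to ``equality on $\supp(\mu)$'' using regularity of $\phi_0$ on the Aubry set.

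For the pointwise inequality, the viscosity identity $H(\nabla\phi_0(x),x)=-\overline{H}_0$ combined with the definition $H(p,x)=\sup_v(-p\cdot v-L(x,v))$ gives, at each $x$ where $\nabla\phi_0(x)$ exists,
\[
-\nabla\phi_0(x)\cdot v-L(x,v)\leq -\overline{H}_0\quad\forall\,v\in\Rr^N,
\]
which is the desired inequality and shows $I\geq 0$ wherever it is defined.

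The main step is then to show $\int I\,d\mu=0$. By Theorem \ref{teo5}(a) we already have $\int L\,d\mu=\overline{H}_0$, so it suffices to prove the holonomy identity $\int \nabla\phi_0(x)\cdot v\,d\mu=0$. This is the delicate point, since $\phi_0$ is only Lipschitz and semiconcave, not $C^1$. I would obtain it by applying the discrete holonomy constraint with the \emph{continuous} test function $\phi_0$ along the sequence of Theorem \ref{limite}: for every $n$,
\[
\int \frac{\phi_0(x+h_n v)-\phi_0(x)}{h_n}\,d\mu_n=0.
\]
The integrand is dominated by $\Lambda|v|$, where $\Lambda$ is a Lipschitz constant of $\phi_0$, and the family $\{|v|\}$ is uniformly integrable with respect to $\{\mu_n\}$ thanks to the uniform superlinearity of $L$ together with $\int L\,d\mu_n\to\overline{H}_0$. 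Since $\nabla\phi_0$ exists on a $\mu$-full-measure set (Rademacher together with Theorem \ref{supp}), on that set the difference quotient converges pointwise to $\nabla\phi_0(x)\cdot v$. A Vitali-type passage to the limit then gives $\int \nabla\phi_0(x)\cdot v\,d\mu=0$, and combined with $\int L\,d\mu=\overline{H}_0$ and the pointwise bound $I\geq 0$, this forces $I=0$ $\mu$-almost everywhere.

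Finally, to conclude $I(x,v)=0$ for \emph{every} $(x,v)\in\supp(\mu)$ rather than merely $\mu$-a.e., I would invoke the classical weak KAM fact (see Fathi \cite{Fa}) that, under our generic hypothesis, the unique viscosity solution $\phi_0$ is differentiable on the projected Aubry set $\pi_1(\hat{\mathcal{A}}(L))=\pi_1(\supp(\mu))$ with continuous gradient there. Hence $I=L+\nabla\phi_0(\cdot)\cdot(\cdot)-\overline{H}_0$ is continuous on the closed set $\supp(\mu)$, and a continuous function that vanishes $\mu$-a.e.\ on its support must vanish identically on $\supp(\mu)$. The main obstacle in this plan is the holonomy step for the non-smooth $\phi_0$; everything else is either the viscosity inequality or a standard consequence of weak KAM regularity.
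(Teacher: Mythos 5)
Your overall architecture (pointwise inequality $I\geq 0$ from the Hamilton--Jacobi equation, integral identity $\int I\,d\mu=0$, then upgrade from $\mu$-a.e.\ to everywhere on $\supp(\mu)$ via continuity) matches the paper's. The difference, and the problem, is in how you justify $\int\nabla\phi_0(x)\cdot v\,d\mu=0$. The paper gets this directly from the classical fact that a Mather measure is \emph{closed}: being invariant under the Euler--Lagrange flow, it satisfies $\int d\psi(v)\,d\mu=0$ for every Lipschitz $\psi$ differentiable $\mu$-a.e.\ (and $\phi_0$ is differentiable with continuous gradient on the projected Aubry set by Fathi's theorem, which the paper cites). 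No passage to the discrete limit is needed at all; after that the paper finishes with a short contradiction argument in a neighborhood.

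Your route through the discrete constraint, $\int\frac{\phi_0(x+h_nv)-\phi_0(x)}{h_n}\,d\mu_n=0$, followed by a ``Vitali-type passage to the limit,'' has a genuine gap. Vitali-style arguments handle uniform integrability against a \emph{fixed} measure; here the measures $\mu_n$ themselves vary and converge only weak-$*$ to $\mu$. Weak-$*$ convergence $\mu_n\rightharpoonup\mu$ together with pointwise (not uniform) convergence of the integrand does \emph{not} give $\int f_n\,d\mu_n\to\int f\,d\mu$. Worse, the pointwise convergence of the difference quotient to $\nabla\phi_0(x)\cdot v$ fails exactly on the non-differentiability set of $\phi_0$, and while that set has $\mu$-measure zero (it misses the Aubry set), it carries positive $\mu_n$-mass for every $n$ (each $\mu_n$ is absolutely continuous), so some positive mass is always sitting where the convergence is not controlled. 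To rescue this you would need quantitative input --- e.g., one-sided estimates from the semiconcavity of $\phi_0$, or replacing $\phi_0$ by the approximants $\phi_{\epsilon,h}$ as in Theorem~\ref{auxlem} --- none of which is in your sketch. The remaining pieces of your plan (the viscosity inequality, Fathi's regularity on the Aubry set, and the continuity argument) are correct and match the paper, but the holonomy step as written does not close.
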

\begin{proof}By theorem 4.8.3 of \cite{Fa} we have that $\phi_0$ is differentiable in $\pi_1(\supp (\mu))$. Let $(x,v)\in \supp (\mu)$, by corollary 4.4.13 of \cite{Fa} we obtain $H(\nabla\phi_0(x),x)=-\overline H_0.$ Therefore \begin {equation}\label{5}\nabla\phi_0(x)(v)+ L(x,v)\geq \overline H_0.
\end{equation}  To get the other inequality, suppose, by
contradiction, that there exists $(x,v)\in \supp(\mu)$ and
$\epsilon>0$ such that
\[
\nabla\phi_0(x)(v)+ L(x,v)> \overline
H_0+\epsilon.
\]
Then there is a neighborhood $V$ of $(x,v)$ such that
for all $(v,w)\in V $ we have
\[\nabla\phi_0(y)(w)+ L(y,w)> \overline
H_0.
\]
We recall that $\int\nabla\phi_0(x,v)d\mu=0$, then
\[
\int\nabla\phi_0(x)(v)+L(x,v)d\mu>\overline H_0,
\]
because (\ref{5})
is true at any point $(x,v)\in\pi_1(\supp\mu)\times \rn$ and at the
points $(x,v)\in V$ we have the strict inequality.

This implies
\[
\int L(x,v)d\mu>\overline H_0,
\]
but this is a
contradiction.

\end{proof}
If we fix $x$,  we have that
\begin{equation}\label{legendre}-\inf_v
I(x,v)=\sup_v(-I(x,v))=H(\nabla\phi_0(x),x)+\Hh_0.  \end{equation}
 We
know that if $x\in\pi_1(\supp(\mu))  $ then
$H(\nabla\phi_0(x),x)+\Hh_0 =0$, and by the hypothesis that the
Lagrangian is strictly convex in $v$, we obtain that there is just one
$v$  which achieves the supremum  in \eqref{legendre}. Moreover,  as we know
that  $(x,v)\in \supp(\mu)$, if and only if, $ I(x,v)=0$, we conclude that
$I(x,v)>0$, for all $(x,v)\notin \supp(\mu)$ with
$x\in\pi_1(\supp(\mu)).$

It makes sense to look for lower and upper deviations inequalities
just  in the case
$\displaystyle\inf_A I(x,v)>0$.

\begin{theorem}
Let us denote $D=\mbox{dom}(\nabla \phi_0)$.
Let  $A\subset D\times \mathbb{ R}^N   $ be  such that
$D\cap \pi_1(\supp(\mu))\neq \emptyset$, but $d(A ,\supp(\mu))\geq c>0$. Then

(a) if $A$ is a closed bounded set in $D\times \mathbb{ R}^N$ we have
$$
\lim_{\epsilon,h\rightarrow0}\epsilon\,  \ln
  \mu_{\epsilon,h}(A)\,\leq \, -\, \inf_{(x,v) \in A } I(x,v),$$

  (b) if $A$ is an open bounded set in $D\times \mathbb{ R}^N$
  $$\lim_{\epsilon,h\rightarrow0}\epsilon\,  \ln
  \mu_{\epsilon,h}(A)\,\geq \, -\, \inf_{(x,v) \in A_1 } I(x,v),$$
  where $A_1=\{(x,v)\in A\;:\;x\in \pi_1 (\supp(\mu))\}$.
\end{theorem}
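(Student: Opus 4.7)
My strategy is to decouple the explicit form $\mu_{\epsilon,h}(x,v)=\theta_{\epsilon,h}(x)\,e^{-g_{\epsilon,h}(x,v)/\epsilon}$, where
\[g_{\epsilon,h}(x,v):=L(x,v)+\frac{\phi_{\epsilon,h}(x+hv)-\phi_{\epsilon,h}(x)}{h}-\Hh_{\epsilon,h},\]
so that $f_{\epsilon,h}=(\bar\phi_{\epsilon,h}+\phi_{\epsilon,h})/h+g_{\epsilon,h}$. By Theorem~\ref{auxlem} combined with $\Hh_{\epsilon,h}\to\Hh_0$, the function $g_{\epsilon,h}$ converges \emph{uniformly} to $I$ on every closed bounded subset of $D\times\rn$, whereas by \eqref{theta} the factor $\theta_{\epsilon,h}$ is a probability density on $\tn$. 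The two factors will be handled with different tools: uniform convergence of $g_{\epsilon,h}$ controls the $v$-dependence, while normalization of $\theta_{\epsilon,h}$ controls the $x$-integral.

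For part (a), since $A$ is closed, bounded, and contained in $D\times\rn$, uniform convergence gives $g_{\epsilon,h}(x,v)\ge\inf_A I-\delta$ on $A$ for $\epsilon,h$ small. Therefore
\[\mu_{\epsilon,h}(A)\le e^{-(\inf_A I-\delta)/\epsilon}\int_A\theta_{\epsilon,h}(x)\,dv\,dx\le M\,e^{-(\inf_A I-\delta)/\epsilon},\]
with $M:=\sup_{x\in\tn}\mathcal L(\{v:(x,v)\in A\})<\infty$ by boundedness of $A$, using $\int_\tn\theta_{\epsilon,h}\,dx=1$. Taking $\epsilon\ln$, letting $(\epsilon,h)\to 0$, and sending $\delta\to 0$ yields $\limsup\epsilon\ln\mu_{\epsilon,h}(A)\le-\inf_A I$. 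The virtue of this estimate is that the nonuniformly behaved $\theta_{\epsilon,h}$ is harmlessly absorbed into its unit mass.

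For part (b), I would fix $\delta>0$, choose $(x_0,v_0)\in A_1$ with $I(x_0,v_0)<\inf_{A_1} I+\delta$, and pick $r_0,s_0>0$ small enough so that $B_{x_0}(r_0)\times B_{v_0}(s_0)\subset A$ and $I(x,v)<I(x_0,v_0)+\delta$ on this box (possible because $x_0\in D$, where $\nabla\phi_0$ is continuous). The goal is to produce a subset of $A$ on which $f_{\epsilon_n,h_n}$ is bounded above by $I(x_0,v_0)+O(\delta)$. Using Theorem~\ref{limite}(c), I extract a subsequence and points $x_n\in B_{x_0}(r_0)$ with $(\bar\phi_n+\phi_n)(x_n)/h_n<\delta$; the uniform Lipschitz constant $K$ of $\bar\phi_{\epsilon,h}+\phi_{\epsilon,h}$ then forces $(\bar\phi_n+\phi_n)/h_n<2\delta$ on the \emph{shrinking} ball $B_n:=B_{x_n}(\delta h_n/K)$. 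For $n$ large, $B_n\times B_{v_0}(s_0/2)\subset A$ and the uniform convergence of $g_{\epsilon_n,h_n}$ gives $f_{\epsilon_n,h_n}<I(x_0,v_0)+O(\delta)$ there, whence
\[\mu_n(A)\ge \bigl|B_n\times B_{v_0}(s_0/2)\bigr|\,e^{-(I(x_0,v_0)+O(\delta))/\epsilon_n}.\]
The volume is of order $h_n^N s_0^N$; the relation $h_n\ge\epsilon_n$ (assumed in Theorem~\ref{limite}) makes $\epsilon_n\ln h_n\to 0$, so the volume contribution vanishes at the $e^{-1/\epsilon}$ scale. Taking $\liminf$ and then $\delta\to 0$ produces $\liminf\epsilon_n\ln\mu_n(A)\ge-\inf_{A_1} I$.

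The main obstacle is exactly the nonuniform behavior of $(\bar\phi_{\epsilon,h}+\phi_{\epsilon,h})/h$ in part (b): an equi-Lipschitz, pointwise-nonnegative-in-the-limit quantity divided by $h\to 0$ cannot be controlled uniformly on a fixed-size set. Shrinking the localization ball to scale $O(h)$ absorbs exactly the Lipschitz modulus, and the hypothesis $h\ge\epsilon$ then guarantees that this shrunken volume is still "macroscopic" at the exponential resolution $e^{-1/\epsilon}$. Both ingredients are essential; without either one, the lower bound would be lost.
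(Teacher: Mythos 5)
Your proof is correct and departs from the paper's in both parts; part~(b) is a genuinely different argument. For (a), the paper bounds $\mu_{\epsilon,h}(A)\le |A|\,e^{-\inf_A f_{\epsilon,h}/\epsilon}$ and then splits $\inf_A f_{\epsilon,h}$ into the $\theta$-contribution (controlled via Theorem~\ref{limite}(c)) and the $g$-contribution. Your version---bounding $g_{\epsilon,h}\ge\inf_A I-\delta$ on $A$ via uniform convergence and absorbing $\theta$ entirely through $\int_{\tn}\theta\,dx=1$---is cleaner, does not invoke~\ref{limite}(c), and in fact does not even need the hypothesis that $\pi_1(A)$ meets $\pi_1(\supp\mu)$. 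For (b), the paper fixes a set $\overline{A_\delta}\subset A$ on which $I\le\inf_{A_1}I+\delta$ and then asserts $\liminf\int_{\overline{A_\delta}}\theta\,dx\,dv\ge c_\delta>0$; the natural justification is that $\theta(x)\,dx$ is the $\pi_1$-marginal of $\mu_{\epsilon,h}$, which converges weakly to $\mu\circ\pi_1^{-1}$, and $\pi_1(\overline{A_\delta})$ is an open set meeting $\pi_1(\supp\mu)$. You replace this soft compactness argument by a quantitative one: locate a near-minimizer $x_n$ of $(\bar\phi_n+\phi_n)/h_n$ via Theorem~\ref{limite}(c), spread the smallness over a ball of radius $O(h_n)$ using the uniform Lipschitz bound, and observe that this shrinking volume is still macroscopic at resolution $e^{-1/\epsilon_n}$ precisely because $h_n\ge\epsilon_n$. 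This has the merit of making the role of $h\ge\epsilon$ entirely explicit, whereas in the paper that hypothesis is only used inside the proof of Theorem~\ref{limite}(a). One small imprecision to correct: you say you ``extract a subsequence'' from~\ref{limite}(c), but the conclusion $\limsup_n\inf_{B_{x_0}(r)}(\bar\phi_n+\phi_n)/h_n=0$ already yields $\inf_{B_{x_0}(r)}(\bar\phi_n+\phi_n)/h_n<\delta$ for \emph{all} sufficiently large $n$, not merely along a subsequence; and since you must prove a $\liminf$ over the whole sequence, passing to a subsequence would actually weaken the conclusion, so none should be extracted.
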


{\bf Remark on item (b):} Given a set $A$ as above, consider $B^\delta=\{(x,v) \in A$, such that $d(x, \supp(\mu))\geq \delta>0\}$, for any fixed small $\delta$. From theorem \ref{limite} (and theorem \ref{auxlem}) we have that
$$\lim_{\epsilon,h\rightarrow0}\epsilon\,  \ln
  \mu_{\epsilon,h}(A)\,= \lim_{\epsilon,h\rightarrow0}\epsilon\,  \ln
  \mu_{\epsilon,h}(B^\delta)\,.$$

In this way, the lower bound  $-\, \inf_{(x,v) \in A_1 } I(x,v)$ is the precise information that makes sense.
In other words, the values $I(x,v)$ outside $A_1$ are not relevant.
\begin{proof}

(a) Note that  $A\subset D\times B_R$,
where $B_R=\{v\in \rn;|v|\leq R\}$, for some  $R>0$.

Remember that

\[
\mu_{\epsilon,h}(A)= \int_A e^{-( \,       \frac{ \bar
\phi_{\epsilon, h}  (x) +\phi_{\epsilon, h}  (x) }{ \epsilon \,h
 }\,\,
+\,\,\frac{ h\, L(x,v)+ \phi_{\epsilon,h}(x+hv)-\phi_{\epsilon,h}(x)
-h\,\overline{H}_{\epsilon,h}}{ \epsilon \, h        }\,)\, }dx dv,
\]

then
\begin{align*}
\mu_{\epsilon,h}(A)&\leq \,  e^{-\inf_{A}{( \,       \frac{ \bar
\phi_{\epsilon, h}  (x) +\phi_{\epsilon, h}  (x) }{\epsilon \, h
 }\,\,
+\,\,\frac{ h\, L(x,v)+ \phi_{\epsilon,h}(x+hv)-\phi_{\epsilon,h}(x)
-h\,\overline{H}_{\epsilon,h}}{ \epsilon \, h        }\,)\,
}}\int_{A} dx\,\, dv
\end{align*}

Hence
 $$\epsilon\ln\mu_{\epsilon,h}(A)\leq$$

$$\leq -\inf_{A}{\left(\frac{\bar \phi_{\epsilon, h}(x) +\phi_{\epsilon, h}(x) }{h}\right) -\inf_{A} \left({
L(x,v)+\frac{
   \phi_{\epsilon,h}(x+hv)-\phi_{\epsilon,h}(x)}{h} -\overline{H}_{\epsilon,h}}\right) }\,+\,\epsilon\ln c_1R^N .             $$

By item (c) of theorem  \ref{limite}
we have
$$\limsup_{\epsilon,h\to 0}\inf_{x\in A}   \frac{ \{\bar \phi_{\epsilon, h}(x) +\phi_{\epsilon, h}(x) \}}{h}=0$$

This implies that

$$\limsup_{\epsilon,h\to 0}\;\epsilon\ln\mu_{\epsilon,h}(A)\leq -\inf_{x\in A} I(x,v).$$

(b) Let $A\subset D\times\rn$ be a bounded and open set in $D\times\rn$, such that $A\cap A_1\neq\emptyset$. We fix  $\delta>0$, as $I(x,v)$ is continuous in $D\times\rn$ (see theorem 4.9.2 of \cite{Fa}), we can find an open set $A_{\delta}$ such that: $\bar{A_{\delta}}$ is a closed set in $D\times\rn$, $\bar {A_{\delta}}\cap A_1\neq\emptyset$   and

 $$
I(x,v)\leq\inf_{A_1}I(x,v)+\delta\;\;\;\mbox{ for all }\;\;\;(x,v)\in \bar{A_{\delta}}$$

Therefore  $$ \mu_{\epsilon,h}(A)\geq
\mu_{\epsilon,h}(\bar{A_{\delta}})\geq
e^{-\sup_{\bar{A_{\delta}}}\left(\frac{ h\, L(x,v)+ \phi_{\epsilon,h}(x+hv)-\phi_{\epsilon,h}(x)
-h\,\overline{H}_{\epsilon,h}}{ \epsilon \, h        }\right)}\int_{\bar{A_{\delta}}}e^{-( \,       \frac{ \bar
\phi_{\epsilon, h}  (x) +\phi_{\epsilon, h}  (x) }{ \epsilon\,h
 })} dx dv  $$

As $\bar{A_{\delta}}\cap A_1\neq \emptyset$ and $A_{\delta}$ is an open set there exists $c_{\delta}>0$ such that
$$1\geq \liminf_{\epsilon,h\to 0}\int_{\bar{A_{\delta}}}e^{-( \,       \frac{ \bar
\phi_{\epsilon, h}  (x) +\phi_{\epsilon, h}  (x) }{\epsilon\,h
 })} dx dv  \geq c_{\delta}.$$

We have that
$$ \liminf_{\epsilon,h\to 0}\; \epsilon\ln \mu_{\epsilon,h}(A)\geq -\sup_{\bar{A_{\delta}}}I(x,v) \geq -\inf_{A_1} I(x,v)+\delta $$
Making  $\delta\to 0$ we obtain
$$ \liminf_{\epsilon,h\to 0}\; \epsilon\ln \mu_{\epsilon,h}(A)\geq -\inf_{A_1} I(x,v).$$

\end{proof}

\begin{theorem}\label{ldph}
If $A\subset D\times\rn$ is such that
$D\cap \pi_1(\supp(\mu))= \emptyset$.
If $A$ is  closed and  bounded we have
$$
\lim_{\epsilon,h\rightarrow0}\epsilon\, h \ln
  \mu_{\epsilon,h}(A)\leq\,-\,   \inf_{x\in D}    \{\bar \phi_{0}(x) +\phi_{0}(x) \}.
$$
And if $A$ is  open and  bounded we have
$$
\lim_{\epsilon,h\rightarrow0}\epsilon\, h \ln
  \mu_{\epsilon,h}(A)\geq \,-\,   \inf_{x\in D}    \{\bar \phi_{0}(x) +\phi_{0}(x) \}.
$$
\end{theorem}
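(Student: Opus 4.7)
The plan is to exploit the explicit density formula from Theorem \ref{teo2} and factor it as
\[
\mu_{\epsilon,h}(x,v)=\theta_{\epsilon,h}(x)\,\rho_{\epsilon,h}(x,v),\qquad \rho_{\epsilon,h}(x,v):=e^{-\frac{hL(x,v)+\phi_{\epsilon,h}(x+hv)-\phi_{\epsilon,h}(x)-h\overline{H}_{\epsilon,h}}{\epsilon h}}.
\]
The fixed point equation \eqref{G} gives the crucial identity $\int_{\rn}\rho_{\epsilon,h}(x,v)\,dv=1$ for every $x\in\tn$, so $\rho_{\epsilon,h}$ acts as a family of probability transition densities. Since $D\cap\pi_1(\supp(\mu))=\emptyset$, Theorem \ref{supp} combined with Theorem \ref{limite}(a) yields $\bar\phi_0+\phi_0>0$ on $D$; this is the reason the natural scale for $\ln\mu_{\epsilon,h}(A)$ is $1/(\epsilon h)$ rather than $1/\epsilon$.

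For the upper bound, on a closed bounded set $A\subset D\times\rn$ I would estimate
\[
\mu_{\epsilon,h}(A)\;\le\;\Bigl(\sup_{x\in D}\theta_{\epsilon,h}(x)\Bigr)\int_{D}\!\!\int_{\rn}\rho_{\epsilon,h}(x,v)\,dv\,dx\;=\;e^{-\inf_{D}(\bar\phi_{\epsilon,h}+\phi_{\epsilon,h})/(\epsilon h)}\,|D|.
\]
Multiplying by $\epsilon h$, taking $\ln$, and letting $\epsilon,h\to 0$ along the sequence from Theorem \ref{viscosity}, uniform convergence $\phi_{\epsilon,h}\to\phi_0$ and $\bar\phi_{\epsilon,h}\to\bar\phi_0$ together with continuity of $\bar\phi_0+\phi_0$ on the compact set $D$ give $\epsilon h\ln\mu_{\epsilon,h}(A)\le -\inf_D(\bar\phi_0+\phi_0)+o(1)$.

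For the lower bound, fix $\eta>0$ and pick $x_0\in D$ with $\bar\phi_0(x_0)+\phi_0(x_0)\le\inf_D(\bar\phi_0+\phi_0)+\eta$; since $A$ is open with $\pi_1(A)\subset D$, choose $v_0$ so that $(x_0,v_0)\in A$ and let $B\subset A$ be a small product neighborhood of $(x_0,v_0)$. Then
\[
\mu_{\epsilon,h}(A)\;\ge\;\int_B\theta_{\epsilon,h}(x)\rho_{\epsilon,h}(x,v)\,dx\,dv\;\ge\;|B|\,\inf_B\theta_{\epsilon,h}\cdot\inf_B\rho_{\epsilon,h}.
\]
The dominant contribution to $\epsilon h\ln\mu_{\epsilon,h}(A)$ is $-\sup_B(\bar\phi_{\epsilon,h}+\phi_{\epsilon,h})$, which under uniform convergence and contraction of $B$ tends to $-(\bar\phi_0(x_0)+\phi_0(x_0))\ge -\inf_D(\bar\phi_0+\phi_0)-\eta$. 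Sending $\eta\downarrow 0$ concludes the argument.

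The main obstacle is controlling the $\rho_{\epsilon,h}$ factor on the correct scale. Explicitly, $\epsilon h\ln\rho_{\epsilon,h}(x,v)=-\bigl[hL(x,v)+\phi_{\epsilon,h}(x+hv)-\phi_{\epsilon,h}(x)-h\overline{H}_{\epsilon,h}\bigr]$, and to make the lower bound nondegenerate I need each of these three terms to be $o(1)$ as $\epsilon,h\to 0$ uniformly on $B$. For a bounded set $B$, $hL$ and $h\overline{H}_{\epsilon,h}$ are $O(h)$, and the $\phi$-increment is $O(h)$ by the uniform Lipschitz constant provided by the uniform semiconcavity bound of Theorem \ref{pfixo} together with the uniform boundedness secured by the normalization \eqref{theta}. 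Once this uniform control is in place, the $\rho$-contribution vanishes in the limit and the $\theta$-contribution alone produces the announced rate.
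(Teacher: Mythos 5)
Your proof is correct and follows essentially the same route as the paper: both write $\mu_{\epsilon,h}$ on $A$ as an integral of $e^{-\tilde f_{\epsilon,h}/(\epsilon h)}$, observe that the contributions $hL$, $h\overline{H}_{\epsilon,h}$, and the Lipschitz-controlled increment $\phi_{\epsilon,h}(x+hv)-\phi_{\epsilon,h}(x)$ are $O(h)$ on bounded sets so that $\tilde f_{\epsilon,h}\to\bar\phi_0+\phi_0$ uniformly, and conclude by the Laplace method (Lemma~\ref{laplace}) at scale $\epsilon h$. The only cosmetic difference is that you factor the density as $\theta_{\epsilon,h}\rho_{\epsilon,h}$ and invoke $\int_{\rn}\rho_{\epsilon,h}\,dv=1$ for the upper bound, whereas the paper keeps everything inside a single exponent $\tilde f_{\epsilon,h}$ and applies the lemma directly to both bounds.
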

\begin{proof}

We can write  $$\mu_{\ep,h}(A)=\int_A e^{-\frac{\tilde f_{\epsilon,h}(x,v)}{\ep h}}dxdv,$$ where
 $$
\tilde f_{\epsilon,h}(x,v)=\bar \phi_{\epsilon, h}(x)+\phi_{\epsilon, h}(x)+h L(x,v)+\phi_{\epsilon,h}(x+hv)-\phi_{\epsilon,h}(x)-h\overline{H}_{\epsilon,h}.
$$
As $\tilde f_{\epsilon,h}(x,v)\to \bar \phi_{0}(x) +\phi_{0}(x)$ uniformly, using lemma \ref{laplace} (Laplace Method) we get the two inequalities of the theorem.
\end{proof}

We have some final comments about the large deviation problem. For a
fixed $(x,p)$ consider
$$ Z_{\epsilon,h,p} (x)= \int \, e^{\frac{\langle p,v\rangle}{\epsilon } \,}
\, d \, \mu_{\epsilon, h}(x,v) \,\,\,$$
and the free energy $$c(p, x)= \lim_{\epsilon, h \, \to 0}\,
\epsilon\, \ln Z_{\epsilon,h,p} (x),$$
where $\mu_{\epsilon, h}$ was chosen for $L$ as above.

\begin{theorem}
For each, $(x,p)$, for almost everywhere (Lebesgue) $x$
$$ c(p,x) = H(\,  \nabla \phi_0(x)-p,x \,)+\Hh_0 $$
\end{theorem}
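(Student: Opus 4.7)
The plan is to substitute the explicit density of $\mu_{\epsilon,h}$ given in Theorem \ref{teo2} into the definition of $Z_{\epsilon,h,p}(x)$, algebraically separate an $x$-dependent prefactor from a $v$-integral, and then combine Theorem \ref{auxlem} with the Laplace asymptotics of Lemma \ref{laplace}.

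Concretely, using $\theta_{\epsilon,h}(x)=e^{-(\bar\phi_{\epsilon,h}(x)+\phi_{\epsilon,h}(x))/(\epsilon h)}$ and $\lambda_{\epsilon,h}/h=\overline H_{\epsilon,h}$, a direct substitution gives
\[
\epsilon\ln Z_{\epsilon,h,p}(x)\;=\;-\frac{\bar\phi_{\epsilon,h}(x)+\phi_{\epsilon,h}(x)}{h}\;+\;\epsilon\ln\!\int_{\rn}\!e^{-g_{\epsilon,h}(x,v)/\epsilon}\,dv,
\]
where
\[
g_{\epsilon,h}(x,v)\;:=\;L(x,v)-p\cdot v+\frac{\phi_{\epsilon,h}(x+hv)-\phi_{\epsilon,h}(x)}{h}-\overline H_{\epsilon,h}.
\]
For the $v$-integral I would invoke Theorem \ref{auxlem}, which gives, for each $x\in\mbox{dom}(\nabla\phi_0)$,
\[
g_{\epsilon,h}(x,v)\;\longrightarrow\;g_0(x,v):=L(x,v)+(\nabla\phi_0(x)-p)\cdot v-\overline H_0
\]
uniformly on compact sets in $v$. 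The uniform Lipschitz/semiconcavity bounds on $\phi_{\epsilon,h}$ from Theorem \ref{pfixo} control the difference quotient linearly in $|v|$, so the uniform superlinearity of $L$ provides tail domination and Lemma \ref{laplace} applies, yielding
\[
\epsilon\ln\!\int_{\rn}\!e^{-g_{\epsilon,h}(x,v)/\epsilon}\,dv\;\longrightarrow\;-\inf_{v}g_0(x,v)\;=\;H(\nabla\phi_0(x)-p,x)+\overline H_0,
\]
by the optimal-control definition $H(q,x)=\sup_v(-q\cdot v-L(x,v))$ adopted in the paper.

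It then remains to show that the prefactor $(\bar\phi_{\epsilon,h}(x)+\phi_{\epsilon,h}(x))/h$ vanishes in the limit for Lebesgue-a.e.\ $x$. By Corollary \ref{bpeierls} and Theorem \ref{supp}, $\phi_0(x)+\bar\phi_0(x)$ is the Peierls barrier $h(x,x)$, which vanishes precisely on $\pi_1(\supp(\mu))$; Theorem \ref{limite}(a),(c) then yields a subsequence along which $(\bar\phi_{\epsilon_n,h_n}+\phi_{\epsilon_n,h_n})/h_n\to 0$ at Lebesgue density points of that set. Since $\phi_0$ is semiconcave and hence Lipschitz, Rademacher's theorem guarantees that $\nabla\phi_0$ exists a.e., so combining the two limits above produces $c(p,x)=H(\nabla\phi_0(x)-p,x)+\overline H_0$ wherever the formulas are simultaneously valid.

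The main obstacle is this last step: outside the projected Aubry set Theorem \ref{limite}(b) shows the prefactor actually diverges, so the identity is delicate and requires that $\epsilon_n$ and $h_n$ be coupled at a rate (in the spirit of the assumption $h_n\geq\epsilon_n$ already used in Theorem \ref{limite}) ensuring the infimum in Theorem \ref{limite}(c) descends to $0$ pointwise rather than merely in a neighborhood-averaged sense. The remaining ingredients — the substitution of the density, the separation of the $x$-prefactor from the $v$-integral, and the Laplace-method calculation of the Legendre transform producing $H(\nabla\phi_0(x)-p,x)$ — are essentially routine once the preceding theorems are in hand.
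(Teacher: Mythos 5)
Your overall plan (insert the explicit density from Theorem \ref{teo2}, separate the $v$-integral, then combine Theorem \ref{auxlem} with Lemma \ref{laplace} to obtain the Legendre transform) is the same as the paper's extremely terse proof. However, you have a genuine gap in the handling of the prefactor $\theta_{\epsilon,h}(x)=e^{-(\bar\phi_{\epsilon,h}(x)+\phi_{\epsilon,h}(x))/(\epsilon h)}$, and the fix you propose cannot work.

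The issue is which density is being integrated in $Z_{\epsilon,h,p}(x)$. Because of the fixed-point relation $\Gg[\phi_{\epsilon,h}]=\phi_{\epsilon,h}+\lambda_{\epsilon,h}$, the factor $\gamma_{\epsilon,h}(x,v)=e^{-(hL(x,v)+\phi_{\epsilon,h}(x+hv)-\phi_{\epsilon,h}(x)-\lambda_{\epsilon,h})/(\epsilon h)}$ already satisfies $\int_{\rn}\gamma_{\epsilon,h}(x,v)\,dv=1$ for every $x$; it is the conditional (disintegrated) density of $v$ given $x$. That is the density that belongs inside $Z_{\epsilon,h,p}(x)=\int e^{\langle p,v\rangle/\epsilon}\gamma_{\epsilon,h}(x,v)\,dv$. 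Under that reading, the $\theta$ prefactor never enters, and the Laplace computation you set up directly gives $c(p,x)=\sup_v\{-(\nabla\phi_0(x)-p)\cdot v-L(x,v)\}+\Hh_0=H(\nabla\phi_0(x)-p,x)+\Hh_0$ at every $x$ where $\nabla\phi_0$ exists, i.e., Lebesgue-a.e.\ by Rademacher. No coupling of $\epsilon$ and $h$ is needed beyond what Theorem \ref{auxlem} already provides.

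Your version retains $\theta_{\epsilon,h}(x)$ and then tries to argue that $(\bar\phi_{\epsilon,h}(x)+\phi_{\epsilon,h}(x))/h\to 0$ for Lebesgue-a.e.\ $x$ via Theorem \ref{limite}(a),(c). This step would fail: Theorem \ref{limite}(b) shows that this quotient diverges to $+\infty$ for every $x\notin\pi_1(\supp(\mu))$, and $\pi_1(\supp(\mu))$ generically has Lebesgue measure zero (the discrete Mather measure is supported on a Lipschitz graph). So with the joint density the additive contribution $-\lim\epsilon\ln\theta_{\epsilon,h}(x)$ would be $-\infty$ a.e., and no rate-coupling of $\epsilon_n\le h_n$ can repair that. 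Your own alarm bell (invoking Theorem \ref{limite}(b)) is exactly the sign that the version with $\theta$ cannot close; the remedy is to drop $\theta$ by recognizing $Z_{\epsilon,h,p}(x)$ as the tilt of a conditional probability in $v$, not to tune $\epsilon,h$.
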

\begin{proof}
As
$$\mu_{\epsilon,h}(x,v)=\theta_{\epsilon,h}(x) \,\gamma_{\epsilon,h}(x,v)\,= \, e^{-\, \frac{\bar  \phi_{\epsilon,h}(x)+ \phi_{\epsilon,h}(x)   }{\epsilon \, h }}\,  e^{-\frac{hL(x,v)+\phi_{\epsilon,h}(x+hv)-\phi_{\epsilon,h}(x)-\lambda_{\epsilon,h}}{\epsilon h}},$$
then
$$Z_{\epsilon,h,p} (x)= \int \, e^{\frac{\langle p,v\rangle}{\epsilon } \,}\,  \,\theta_{\epsilon,h}(x) \,\gamma_{\epsilon,h}(x,v)\, dx\, dv.$$
As $H$ is the Legendre transform of $L$, the result follows from
the results we obtained before.
\end{proof}

Therefore, the Legendre transform of the free energy is the
deviation function.

\vspace{0.3cm}

\noindent {\bf Example.} An interesting example is the following:

Consider $L(x,v)=\frac{v^2}{2} $.

Then
$$\Gg[\phi](x):= -\epsilon h \mbox{ ln}\left[
  \int_{\re^N} e ^{-\frac{h\,       \frac{v^2}{2}  +\phi(x+hv)}{\epsilon h}}dv\right] ,$$
satisfies
$$\Gg[0](x)  = -\epsilon h \mbox{ ln} (
  \int_{\re^N} e ^{-\frac{        h\, \frac{v^2}{2} }{\epsilon\,h }}dv) =  -\epsilon h \mbox{ ln} \, \sqrt{2 \pi \epsilon }  + 0  =
\lambda_{\epsilon,h}        .$$
Therefore $\theta_{\epsilon, h}=1$ and
$$\mu_{\epsilon,h}(x,v)=
e^{-       \frac{   \,  \frac{v^2}{2}       -\Hh_{\epsilon,h}  }{\epsilon }}.$$

In this case,

$$  S[\mu_{\epsilon,h} ]=\int_{\tn\times\rn} \mu_{\epsilon,h}(x,v)\ln\frac{\mu_{\epsilon,h}(x,v)}{\int_{\rn}\mu_{\epsilon,h}(x,w)dw}dxdv=
$$
$$\int_{\rn}   \,   e^{-       \frac{   h\,  \frac{v^2}{2}       -\lambda_{\epsilon,h}  }{\epsilon \, h}  }
\,  \ln \,(\,        e^{-       \frac{   h\,  \frac{v^2}{2}       -\lambda_{\epsilon,h}  }{\epsilon \, h}}        \,)\, dv=
$$
$$\int_{\rn}   \,   e^{-       \frac{   h\,  \frac{v^2}{2}       -\lambda_{\epsilon,h}  }{\epsilon \, h}  }
\,  (-       \frac{   h\,  \frac{v^2}{2}       -\lambda_{\epsilon,h}  }{\epsilon \, h} )\, dv\,=
$$
$$ =-\mbox{ ln} \, \sqrt{2 \pi \epsilon } -{\frac{1  }{ \sqrt{2 \pi \epsilon } }}\int \frac{v^2}{2\epsilon}\; e^{-\frac{v^2}{2\epsilon}   }dv=-\mbox{ ln} \, \sqrt{2 \pi \epsilon }-\frac{1}{2}$$

Therefore, the term $\epsilon\, S(\mu_{\epsilon,h})$ goes to $0$ when
$\epsilon \to 0$. We point out that  $S(\mu_{\epsilon,h})$ goes to
$+\infty$ when $\epsilon \to 0$
Moreover, $$\overline{H}_0=
\lim_{h\to 0}\lim_{\epsilon\to 0}\,  \frac{\lambda_{\epsilon,h} }{h}=0 $$
In this case
$$I(x,v)= L(x,v)+\nabla\phi_0(x)(v)-\overline{H}_{0}=  \frac{v^2}{2} ,$$
and the equation $I(x,v)=0$, means that, $v=0$. The Aubry set, as it
is known,  in this case is the set of elements of  the form $(x,0)$,
for any $x \in \mathbb{ T}^N.$

The Varadhan's Integral Lemma \cite{DZ} claims the following:
suppose $I(x,v) = I(v)$ is the deviation function for $\mu_{\epsilon
, h}$ as above, then, if $g: \mathbb{R}^N   \to \mathbb{R} $ is a
continuous function $g(v)$, then
$$ \lim_{\epsilon,h\rightarrow0}\epsilon \,  \ln
 \int \, e^{g(v) }   \mu_{\epsilon,h}(v)=
    \sup_v \{g(v) - I(v) \}\,=\,  \sup_v \{g(v) - \frac{v^2}{2} \}
$$

An interesting example is when $p$ is fixed and we consider $g(v) =
\langle p,v\rangle.$ In this case, $ \sup \{g(v) - \frac{v^2}{2} \}= p^2.$

\section{The discrete time Aubry-Mather problem}
\subsection{The uniqueness of the calibrated subactions }

In this section we will study some dynamical properties of the discrete
time Aubry-Mather problem
(see \cite{Gom}). These
will be used to obtain conditions for the uniqueness of  $\phi_h$ used
in the definition of $I_h$.

For a $h>0$ fixed, remember that
\[
\Hh_{h}=\min_{\mathcal M_h} \left\{ \int_{\tn\times\rn}
   L(x,v)d\mu(x,v)\right\}
\] where
$$\mathcal M_h=\left\{\mu\in\mathcal M\;:\;   \int_{\Tt^N\times \Rr^N}
\varphi(x+hv) -\varphi(x) d\mu=0, \forall \varphi\in C(\tn) \right\}.$$
A measure $\mu_h$ which attains such minimum is called a discrete Mather measure for $L$.
Note that  $\Hh_h$ (possibly up to a sign convention)   is the analog of Ma\~n\'e's critical value.

\bigskip

\begin{definition}A continuous function $u: \mathbb{T}^N \to \mathbb{R} $ is called

(a) a forward-subaction if
$$ u(x)\leq u(x+h\, v) + h\, L(x,v) -  h\, \Hh_h, \;\;\;\forall(x,v)\in\tn\times\rn,$$

(b) a backward-subaction if $$ u(x)\leq u(x-h\, v) + h\, L(x-hv,v) -  h\, \Hh_h, \;\;\;\forall(x,v)\in\tn\times\rn.$$
\end{definition}

\begin{definition}
A continuous function $u: \mathbb{T}^N \to \mathbb{R} $ is called a calibrated forward- subaction (calibrated subaction for short) if, for any $x$, we have
$$ u(x)= \inf_{v \in \mathbb{R}^N } \, \{ u(x+h\, v) + h\, L(x,v) -  h\, \Hh_h \}.
$$
\end{definition}

For each value $x$ this infimum is attained by some (can be more that one) $v(x)$.

\begin{definition}
A continuous function $u: \mathbb{T}^N \to \mathbb{R} $ is called a calibrated backward- subaction if, for any $x$ we have
$$ u(x)= \inf_{v \in \mathbb{R}^N } \, \{ u(x-h\, v) + h\, L(x-h v ,v) - h\, \Hh_h \}.$$
\end{definition}
By item (b) of theorem \ref{teo3}, any limit of a subsequence $ \lim_{\epsilon_{i} \to 0} \phi_{\epsilon_i,\, h} =  \phi_h$, is a calibrated subaction for $L$. In general it is not known if $\phi_h$ is unique (up to a constant).
We will establish bellow (Theorem \ref{unicidade}) a  condition for such uniqueness.
Similar properties are true for the backward problem, that is, if
$ \lim_{\epsilon_{i} \to 0} \bar \phi_{\epsilon_i,\, h} = \bar \phi_h$,
then $\bar \phi_h$ is a calibrated backward-subaction, etc...

\begin{proposition}\label{diff} Let $u$ be a calibrated subaction to the Lagrangian $L$.
If $u$ is differentiable at $x$ then
$$\nabla u(x)=hL_x(x,v(x))-L_v(x,v(x)). $$
\end{proposition}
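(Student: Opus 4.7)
The plan is to fix the destination point $y := x + h\,v(x)$ and regard the calibration inequality as a one-sided tangency condition in the source variable. Because $v(x)$ achieves the infimum, we have the equality
\[
u(x) = u(y) + h\,L(x,(y-x)/h) - h\,\Hh_h,
\]
while for every nearby $\tilde x$ (still sending to the fixed target $y$, so using velocity $(y-\tilde x)/h$) the calibration gives the inequality
\[
u(\tilde x) \le u(y) + h\,L\bigl(\tilde x,(y-\tilde x)/h\bigr) - h\,\Hh_h =: G(\tilde x).
\]
Thus $G - u \ge 0$ on a neighborhood of $x$, with equality at $\tilde x = x$; in particular $x$ is a local minimum of $G - u$.

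Next I would exploit the assumed differentiability. Since $L$ is $C^1$ (in fact $C^2$, by the convexity/semiconcavity hypotheses on the Lagrangian), the function $G$ is differentiable at $x$, and since $u$ is differentiable at $x$ by assumption, so is $G - u$. The first-order condition at the minimum gives $\nabla G(x) = \nabla u(x)$. A direct computation, using the chain rule applied to $\tilde x \mapsto L(\tilde x,(y-\tilde x)/h)$, yields
\[
\nabla G(\tilde x) \;=\; h\,L_x\!\bigl(\tilde x,(y-\tilde x)/h\bigr) \;-\; L_v\!\bigl(\tilde x,(y-\tilde x)/h\bigr).
\]
Evaluating at $\tilde x = x$, where $(y-x)/h = v(x)$, delivers exactly
\[
\nabla u(x) \;=\; h\,L_x(x,v(x)) \;-\; L_v(x,v(x)).
\]

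The main subtlety, and the only real obstacle, is that we are using $u$'s differentiability only at the single point $x$, not at the translate $x + h\,v(x)$; the trick above is precisely what avoids requiring any regularity at the target. Everything else is routine: existence of the minimizer $v(x)$ was asserted right after the definition of a calibrated subaction, and the smoothness of $L$ is already part of the standing hypotheses. No further estimates, no use of semiconcavity, and no passage to the limit in $\epsilon$ or $h$ are needed.
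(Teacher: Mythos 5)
Your proof is correct. The paper itself does not spell out an argument — it simply cites Theorem 4.1 of Gomes's discrete Aubry-Mather paper — and the endpoint-fixing argument you give (fix $y = x + h\,v(x)$, use $\tilde x \mapsto u(y) + h L(\tilde x,(y-\tilde x)/h) - h\Hh_h - u(\tilde x)$ as a nonnegative function vanishing at $x$, then differentiate) is precisely the standard way that result is proved; in particular you are right that it only requires differentiability of $u$ at $x$ itself and the smoothness of $L$, and no semiconcavity.
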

This theorem can be shown using the same arguments  of the proof of  theorem 4.1 in \cite{Gom}.

\medskip

\textbf{Assumption:}  We shall suppose also that the Lagrangian is such that    $L_x$ has bounded  Lipschitz constant in $v$. Because in this case the equation  $p=hL_x(x,v(x))-L_v(x,v(x))$ has only one differentiable solution, when $h$ is small enough. Hence by the same arguments used in theorem 5.5 of \cite{Gom} we obtain that any minimizing measure  $\mu_h$ is supported in a graph.

\bigskip

The next definitions will be considered for a fixed   value of $h>0$, small enough, such that we have the graph property.

\begin{definition}
 Given $k$ and  $ x ,
z \in \rn  $, we will call a $k$-path
beginning in $x $ and ending at
$ z $ an ordered sequence of points
$$ (x_0,x_1,....,x_k)\in \rn\times ...\times \rn $$
satisfying $ x_0 = {x} $,  $x_k=z$.
\end{definition}
\hspace{.1in}

We will denote by
$ \mathcal P_k (x, z)=  \mathcal P_k^h (x, z)   $ the set of such $k$-paths.

\hspace{.1in}

\noindent For each $x_j$ we will associate a $v_j\in \mathbb {R}^N$, such that

$$   v_{ j} = \frac{x_{j + 1} -
x_{ j}}h, \mbox {\;\; for\;\; } 0\leq j< k    $$

\vspace{.4cm}

%\begin{figure}[hhh]\label{caminhos}
%\centerline{\includegraphics{path2.eps}}\caption{Graphical representation of paths belonging to $ \mathcal P_4(x, z) %$, with $h=1$.}
%\end{figure}

\vspace{.4cm}
\begin{definition}For a k-path fixed  $(x_0,...,x_k)$  we
define it action  by:
$$A_{L-\Hh_h}(x_0,...,x_k):=h\sum^{k-1}_{i=0}(L-\Hh_h)(x_i,v_i).$$
\end{definition}

\Rm Let $(x_0,...,x_k)\in  \mathcal P_k (x+s,  z) $  be a path, where $x,z\in \rn$ and  $s\in\zn$. As the Lagrangian is $\zn$-periodic we have that the path $(\tilde x_0,...,\tilde x_k)\in  \mathcal P_k (x,  z-s) $ given by $\tilde x_i=x_i-s$ is such that $A_{L-\Hh_h}(x_0,...,x_k)=A_{L-\Hh_h}(\tilde x_0,...,\tilde x_k).$

\begin{definition}
A point $x \in
\mathbb{T}^N $ is called non-wandering with respect to  $L $ if,
given  $ \epsilon > 0
$ there exist $k\geq 1$, $s_k\in\zn$ and a $k$-path
$ (x_0,...,x_k)\in  \mathcal P_k (
x+s_k,  x) $ such that
$$ \left |A_{L-\Hh_h}(x_0,...,x_k) \right | < \epsilon. $$
We will denote by $ \Omega_h(L) $ the set of non-wandering points with respect to $ L$.
\end{definition}

\Rm $ \Omega_h(L) $ is  a  closed set. Indeed, let  $x_k\in \Omega_h(L) $ be  such that $x_k\to x$. For each $x_k$ and $\epsilon=\frac{1}n$ there exists $j_n$, $s_{j_n}$ and $ (x_0,...,x_{j_n})\in \mathcal P_{j_n} (x_k+s_{j_n},  x_k)  $ such that $|A_{L-\Hh_h}(x_0,x_1,....,x_{j_n})|\leq \frac{1}n$. Hence the path $ (x+s_{j_n}, x_1,..., x_{j_n-1},x)$
has also small action, when $n\to\infty$ we get $x\in \Omega_h(L) $.
\bigskip

The proof for the results we describe bellow are similar to the ones   presented  in \cite{GL}
where the discrete time symbolic dynamics version of Aubry-Mather Theory is considered.

\begin {proposition}\label{projsupp} Let $\mu_h$ be a discrete-time Mather measure, then
$$\pi_1 (\supp(\mu_h))\subset\Omega(L)$$
\end {proposition}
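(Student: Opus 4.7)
The plan is to establish the inclusion via a full-measure argument: show that $\nu$-almost every $x$ is non-wandering, where $\nu:=\pi_{1*}\mu_h$, and then use the closedness of $\Omega_h(L)$ (the Remark right after its definition) together with $\pi_1(\supp(\mu_h))=\supp(\nu)$ to conclude. Under the standing assumption that $L_x$ is Lipschitz in $v$, $\mu_h$ is supported on a graph $v=v(x)$, so $\nu$ is invariant under the continuous torus map $T(x)=x+h\,v(x)\pmod{\zn}$. Moreover, since the holonomy constraint and the functional $\mu\mapsto\int L\,d\mu$ are linear, the ergodic components of $\mu_h$ are again invariant and holonomic, and by the ergodic theorem applied to $L$, $\nu$-almost every component $\mu_h^\omega$ satisfies $\int L\,d\mu_h^\omega=\Hh_h$, hence is itself a discrete Mather measure. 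I may therefore restrict attention to a single ergodic component and assume $\mu_h$ is ergodic with $\int(L-\Hh_h)\,d\mu_h=0$.

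The key input is Atkinson's recurrence theorem: for an ergodic system $(X,T,\nu)$ and $f\in L^1(\nu)$ with $\int f\,d\nu=0$, for $\nu$-a.e.\ $x$ and every $\epsilon>0$ there exist arbitrarily large $k$ with $d_{\tn}(T^k x,x)<\epsilon$ and $|S_k f(x)|<\epsilon$, where $S_k f(x):=\sum_{i=0}^{k-1} f(T^i x)$. Applied to $f(x):=L(x,v(x))-\Hh_h$, this yields, for $\nu$-a.e.\ $x$, sequences $k_n\to\infty$, $s_{k_n}\in\zn$, $\delta_n\to 0$, such that, lifting the orbit to $\rn$ via $\tilde T(\tilde x)=\tilde x+h\,v(\tilde x\!\!\mod\zn)$, one has $\tilde T^{k_n}(\tilde x)=\tilde x+s_{k_n}+\delta_n$ and $h\sum_{i=0}^{k_n-1}(L-\Hh_h)(T^i x,v(T^i x))\to 0$.

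For such $k=k_n$, I construct the $k$-path $(\tilde x+s_k,\tilde T(\tilde x)+s_k,\ldots,\tilde T^{k-1}(\tilde x)+s_k,\tilde x)\in\mathcal P_k(\tilde x+s_k,\tilde x)$. For $i=0,\ldots,k-2$ the derived velocity equals $v(T^i x)$ and, by $\zn$-periodicity, the Lagrangian contribution equals $L(T^i x,v(T^i x))$; for $i=k-1$ the velocity becomes $v(T^{k-1}x)-\delta_n/h$, perturbing the last term in the action by at most a constant times $|\delta_n|$ (using that the minimizing velocities are uniformly bounded, as in the proof of Theorem \ref{teo5}, and that $L$ is $C^1$). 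Hence the path's action equals $S_{k_n}f(x)$ plus an $O(|\delta_n|)$ correction and is arbitrarily small, proving $x\in\Omega_h(L)$ for $\nu$-a.e.\ $x$. The main difficulty is the simultaneous recurrence in position and in the Birkhoff sum along a single subsequence: bare Poincar\'e recurrence gives $T^k x\to x$ but says nothing about $S_k f(x)$, so one must invoke Atkinson's theorem for mean-zero observables to obtain both controls at once.
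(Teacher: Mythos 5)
Your argument is correct, but it takes a genuinely different route from the paper's. The paper fixes an arbitrary $x\in\pi_1(\supp\mu_h)$, picks by plain Poincar\'e recurrence a nearby recurrent point $x_0$ for the graph map $\bar\psi$, and then controls the Birkhoff sum of $L-\Hh_h$ along the $\bar\psi$-orbit \emph{not} by ergodic theory but by the calibration identity $\phi_h(x_0)-\phi_h(x_j)=h\sum(L-\Hh_h)(x_i,v_i)$ coming from a calibrated subaction (Theorem 5.4 of \cite{Gom}); since $x_0$ and $x_j-s_j$ both lie in a small ball $B$ and $\phi_h$ is continuous, that difference is small, and a two-endpoint perturbation produces the required near-closing path from $x+s_j$ to $x$. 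You instead dispense with the subaction entirely: after reducing by ergodic decomposition to an ergodic discrete Mather measure, you invoke Atkinson's theorem to obtain, for $\nu$-a.e.\ $x$, simultaneous recurrence in position \emph{and} smallness of the mean-zero Birkhoff sum, then close the orbit and use closedness of $\Omega_h(L)$ plus $\supp\nu=\pi_1(\supp\mu_h)$. Each approach buys something: the paper's uses only Poincar\'e recurrence (the softest possible ergodic input) but imports the calibration property of $\phi_h$ on the support; yours is self-contained on the Lagrangian side but needs the heavier Atkinson recurrence theorem and an ergodic decomposition step, and establishes the conclusion only on a full-measure set before passing to closure, whereas the paper's endpoint-perturbation gives it at every point of $\pi_1(\supp\mu_h)$ directly. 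One small slip: with the recurrence written as $\tilde T^{k}(\tilde x)=\tilde x+s_k+\delta_n$, the correct closing $k$-path is $(\tilde x-s_k,\tilde T(\tilde x)-s_k,\dots,\tilde T^{k-1}(\tilde x)-s_k,\tilde x)\in\mathcal P_k(\tilde x-s_k,\tilde x)$; your version has the sign of $s_k$ flipped, which only changes the label of the integer translate and does not affect the estimate, since the last velocity is then indeed $v(T^{k-1}x)-\delta_n/h$ as you claim.
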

 \begin {proof} By \cite{Gom} we know that $\mu_h$ is supported on a Lipschitz  graph, then we can define $\psi:\rn\to\rn$, such that $\psi(x)=x+hv(x)$, we define $\bar\psi:\tn\to\tn$ by $\bar\psi(x)=\psi(x)\mod\zn$.  We claim that $\mu\circ\pi_1^{-1}$ is $\bar\psi$-invariant.

  Indeed, as  $\mu_h$ is holonomic and by the definition of  $\psi$ we have that for all  $\phi:\tn\to \re$
  $$\int_{\tn}\phi\circ\bar \psi(x)d(\mu_h\circ\pi_1^{-1})=\int_{\tn\times\rn}\phi(x+hv(x)\mod \zn)d\mu_h=$$
 $$ =\int_{\tn\times\rn}\phi(x)d\mu_h=\int_{\tn}\phi(x)d(\mu_h\circ\pi_1^{-1})$$
Let $(x,v)\in \supp(\mu_h)$ and let B be an open ball centered at the point $x$,
 then $\mu\circ\pi_1^{-1}(B)>0$, hence there exists $x_0\in B$ such that $\bar\psi^j(x_0)$ returns infinitely many times to $B$, i.e., there exists $s_j\in\zn$ such that $\psi^j(x_0)-s_j\in B$.      Because $\phi_h$ is a calibrated subaction for L  we can write
 $\phi_h(x_0)-\phi_h(x_j)=h\sum_{i=0}^{j-1}(L-\Hh_h)(x_i,v_i)$, where $x_i:=\psi^i(x_0)$. Given $\delta>0$ and $x_j-s_j\in B$ we can construct the following path
$ (\tilde x_0,...,\tilde x_j)=(x,x_1,...x_{j-1},x+s_j)$
 such that
\[   A_{L-\Hh_h}(\tilde x_0,....,\tilde x_{j})    \leq\delta.\]
Indeed,
\begin{align*}
&A_{L-\Hh_h}(x_0,...,x_{j})=\phi_h(x_0)-\phi_h(x_j)
\\&\quad +h\left[ L(x,\frac{x_1-x}h)-L(x_0,v_0)+L(x_{j-1},\frac{x+s_j-x_{j-1}}h )-L(x_{j-1},v_{j-1})\right]\leq \delta,
\end{align*}
 if B is small enough. Hence $x\in \Omega_h(L).$
 \end{proof}

\begin{definition}
For a fixed value $h>0$,
define
$$ S_h^k(x, z) =\inf_{s\in\zn}
\inf_{ (x_0,...,x_k) \in  \mathcal P_k (x+s, z)  }
\; A_{L-\Hh_h}( x_0,...., x_{k}).$$
Let $S_h$ be the  Ma\~n\'e potential  the function
$ S_h :  \mathbb{T}^N \times \mathbb{T}^N  \to \mathbb R $ defined by
$$ S_h (x, z) = \inf_{k }  S^k_h(x, z). $$
The  Peierls barrier  $\bold{h}_h:\mathbb{T}^N \times \mathbb{T}^N  \to \mathbb R \cup \{ \pm \infty \} $
is the function defined by
$$ \bold{h}_h (x, z) = \liminf_{k\to\infty }  S^k_h( x, z). $$
\end{definition}
\vspace{.4cm}

Note that  $$\Omega_h(L)=\{x\in\tn\;:\;\bold{h}_h (x,  x)=S_h (x,  x)=0\}.$$

We point out here a main difference from the continuous time Mather problem where the  Ma\~n\'e
potential $S$ (defined in a similar way as for instance in \cite{Fa} or \cite{CI})  is zero for any pair $(x,x)$
where $x$ is in the configuration space. The point is that in the continuous
time  case we can consider trajectories with  time as small as  we want, whereas this is not possible in discrete time.

The functions  $S_h$ and $\bold{h}_h$ have the following properties:

\begin{itemize}
\item[(i)] $S_h(x,z)\leq S_h(x,y)+S_h(y,z)\;\;\forall x,y,z\in\tn$,

\item[(ii)] $\bold{h}_h (x,z)\leq \bold{h}_h (x,y)+\bold{h}_h (y,z)\;\;\forall x,y,z\in\tn.$

\item[(iii)] $\bold{h}_h (x,z)\leq S_h (x,y)+\bold{h}_h (y,z)\;\;\forall x,y,z\in\tn.$
\end{itemize}

\begin{proposition} Let us fix $z\in\tn$, the functions $S_h (\cdot,z)$ and $ \bold{h}_h (\cdot,z)$ are forward subactions.

\end{proposition}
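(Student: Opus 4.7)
The plan is to establish the forward-subaction inequality by direct path-concatenation, proving it first for $S_h(\cdot,z)$ and then deducing the one for $\mathbf{h}_h(\cdot,z)$ by passing to the $\liminf$. Fix $z\in\mathbb{T}^N$ and $(x,v)\in\mathbb{T}^N\times\mathbb{R}^N$; I want to show
$$S_h(x,z)\ \leq\ S_h(x+hv,z)+h\bigl(L(x,v)-\overline{H}_h\bigr).$$

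For any $k\geq 1$, any $s\in\mathbb{Z}^N$, and any path $(y_0,\dots,y_k)\in\mathcal{P}_k(x+hv+s,z)$, I form the prepended $(k+1)$-path $(x_0,x_1,\dots,x_{k+1})$ by setting $x_0:=x+s$ and $x_i:=y_{i-1}$ for $1\leq i\leq k+1$. Then $x_0=x+s$, $x_{k+1}=z$, and the first associated velocity is $v_0=(x_1-x_0)/h=v$, while all later velocities agree with those of the original path. Invoking the $\mathbb{Z}^N$-periodicity of $L$ (as noted in the Remark preceding the definition of $\Omega_h(L)$) yields
$$A_{L-\overline{H}_h}(x_0,\dots,x_{k+1}) \;=\; h\bigl(L(x,v)-\overline{H}_h\bigr) \;+\; A_{L-\overline{H}_h}(y_0,\dots,y_k).$$
Since $(x_0,\dots,x_{k+1})\in\mathcal{P}_{k+1}(x+s,z)$, taking the infimum over $(y_0,\dots,y_k)$, then over $s$ and $k$, we obtain $S_h^{k+1}(x,z)\leq h(L(x,v)-\overline{H}_h)+S_h^k(x+hv,z)$, and passing to the global infimum on the right gives the claimed inequality for $S_h$.

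For $\mathbf{h}_h$, the same concatenation gives the per-$k$ estimate
$$S_h^{k+1}(x,z)\ \leq\ h\bigl(L(x,v)-\overline{H}_h\bigr)+S_h^k(x+hv,z).$$
Taking $\liminf_{k\to\infty}$ on both sides and using the trivial identity $\liminf_k S_h^{k+1}(x,z)=\liminf_k S_h^k(x,z)=\mathbf{h}_h(x,z)$ together with $\liminf_k(a_k+c)=\liminf_k a_k+c$ for the fixed constant $c=h(L(x,v)-\overline{H}_h)$, we obtain $\mathbf{h}_h(x,z)\leq h(L(x,v)-\overline{H}_h)+\mathbf{h}_h(x+hv,z)$, which is exactly the forward-subaction inequality.

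There is no real obstacle here: the only bookkeeping item is tracking the integer translate $s\in\mathbb{Z}^N$ needed to lift $x$ and $x+hv$ from the torus to $\mathbb{R}^N$, and this is absorbed into the infimum defining $S_h$ and harmless because $L$ is $\mathbb{Z}^N$-periodic so the prepended leg contributes $h(L(x,v)-\overline{H}_h)$ regardless of $s$. Continuity of $S_h(\cdot,z)$ and $\mathbf{h}_h(\cdot,z)$, needed to conclude that these are genuine (continuous) subactions rather than merely lower-semicontinuous functions satisfying the inequality pointwise, follows from the standing semiconcavity/superlinearity hypotheses in the usual way and can be cited from the analogous arguments in \cite{Gom}.
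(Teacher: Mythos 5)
Your proof is correct and follows essentially the same route as the paper's: the paper deduces the forward-subaction inequality by combining the listed triangle inequalities (i) and (iii) with the one-step bound $S_h(x,x+hv)\leq h(L(x,v)-\Hh_h)$, and your path-concatenation argument is precisely what underlies those triangle inequalities, just made explicit at the level of $S_h^k$ and then passed to the infimum respectively $\liminf$. You also flag (and the paper does not) the continuity needed for these to be genuine subactions, which is a minor but welcome addition.
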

\begin{proof} It follows by  (i) and (iii), respectively, and by the observation that
\[
S_h(x,y)\leq h\left[ L(x, \frac{y-x}{h})-\Hh_h\right].
\]
\end{proof}

In order to prove that $\bold{h}_h (\cdot,z)$ is a calibrated subaction, we need the following  lemma.
Also, note that if $z\in\Omega_h(L)$ then by (iii) we have that $\bold{h}_h (\cdot,z)$ is finite.

\begin{lemma} \label{minimal} Let  $(x_0,...,x_k)\in  \mathcal P_k (x+s, z) $ be  a path such that
$A_{L-\Hh_h}( x_0,...., x_{k})=S_h^k(x, z).   $
 Then there exists a constant $K$ such that  $|v_i|<K$ for all  $0\leq i<k$. Also, $K$ is independent of $x,z\in\tn$.
\end{lemma}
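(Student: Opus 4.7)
The plan is to control the minimum action $S_h^k(x,z)$ from above by a simple constant-velocity comparison path, isolate the single term $hL(x_i,v_i)$ from the minimizing sum, and invoke uniform superlinearity to bound $|v_i|$.

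First I would bound $S_h^k(x,z)$ above uniformly in $x,z\in\tn$. Lift $x,z$ to $[0,1)^N$ and choose $s\in\zn$ so that $|z-(x+s)|\leq \sqrt N$ (take $s$ to be the nearest lattice point to $z-x$). The constant-velocity path $\tilde x_j := (x+s) + j\,(z-x-s)/k$, $0\le j\le k$, belongs to $\mathcal P_k(x+s,z)$, and its common velocity $\tilde v = (z-x-s)/(kh)$ satisfies $|\tilde v|\leq \sqrt N/h$ for all $k\geq 1$. By continuity of $L$ and compactness of $\tn$, the constant
\[
C_1(h) := \sup\{L(y,v)\,:\, y\in\tn,\ |v|\leq \sqrt N/h\}
\]
is finite, so this path has action at most $kh\bigl(C_1(h)-\Hh_h\bigr)$. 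Hence $S_h^k(x,z)\leq kh\bigl(C_1(h)-\Hh_h\bigr)$, a bound independent of $x,z$.

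Next, uniform superlinearity (hypothesis (1)) gives a constant $B>0$ with $L(y,v)\geq |v|-B \geq -B$ for every $(y,v)$. From $S_h^k(x,z) = h\sum_{j=0}^{k-1}[L(x_j,v_j)-\Hh_h]$, isolating the term at index $i$ yields
\[
hL(x_i,v_i) = S_h^k(x,z) + kh\Hh_h - \sum_{j\neq i} hL(x_j,v_j) \leq S_h^k(x,z) + kh\Hh_h + (k-1)hB.
\]
Combining with the previous step, $L(x_i,v_i)\leq kC_1(h) + (k-1)B$. Finally, applying superlinearity once more in the form $|v_i|\leq L(x_i,v_i)+B$, we obtain $|v_i|\leq kC_1(h)+kB =: K$, a constant depending only on $k$ and $h$, and independent of $x,z\in\tn$.

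No serious obstacle arises: the whole argument reduces to exhibiting one explicit comparison path whose action is controlled uniformly in $x,z$ by the $\zn$-periodicity of $L$, and then using the crude bound $L\geq -B$ to decouple one term from the sum. A refinement replacing $L\geq -B$ by the calibrated subaction inequality $\phi_h(x_j)-\phi_h(x_{j+1})\leq hL(x_j,v_j)-h\Hh_h$ gives instead the sharper estimate $hL(x_i,v_i)\leq S_h^k(x,z) + h\Hh_h + 4\|\phi_h\|_\infty$, which is what one really wants when later sending $k\to\infty$ along a minimizing sequence for the Peierls barrier.
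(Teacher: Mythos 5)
Your argument produces a constant $K = kC_1(h) + kB$ that grows linearly with the path length $k$, and you acknowledge this explicitly. That is a genuine gap: every place the paper invokes this lemma it needs a single constant valid simultaneously for paths of all lengths. In the proof of Proposition \ref{calibrada}, the minimizing paths $(x_0^n,\dots,x_{j_n}^n)$ have $j_n\to\infty$, and the uniform bound $|v_0^n|\le K$ is exactly what allows extraction of a convergent subsequence; in Proposition \ref{peierl} the paths are again of unbounded length; and in Lemma \ref{sx} the bound $|y-x_{k-1}|\le K$ is used with arbitrary $k$. None of these steps survive if $K$ depends on $k$.

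The paper's own proof avoids the $k$-dependence by a purely local exchange: if some $|v_i|$ is large, translate the initial segment $(x_0,\dots,x_i)$ by a lattice vector chosen so that the shifted $x_i$ lands within a fixed distance $R$ of $x_{i+1}$. By $\zn$-periodicity of $L$ this leaves every Lagrangian term unchanged except the one at index $i$, which drops strictly below the value $A(R)=\max\{L(x,v):|v|\le R\}$ once $|v_i|$ exceeds a superlinearity threshold depending only on $L$ and $h$; and since the modified path still lies in $\mathcal P_k(x+\tilde s,z)$ for a suitable $\tilde s\in\zn$, this contradicts minimality in the definition of $S_h^k(x,z)$. Your closing remark --- replacing the crude bound $L\ge -B$ by a telescoping subaction inequality --- does point toward a $k$-free estimate, but you neither carry it out nor close the remaining hole that $S_h^k(x,z)$ in your first step is only bounded uniformly in $k$ along a minimizing subsequence for the Peierls barrier, not for all $k$. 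The paper's local modification is both shorter and works for every minimizing $k$-path, which is what the later arguments need.
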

\begin{proof} Let $\displaystyle R=2\max_{x,y\in\tn}d(x,y)$, we define $\displaystyle A(R)=\max\{L(x,v)\; :\;|v|\leq R\}$. As $L$ is superlinear there exists $K$ such that if $|v|\geq K$ then $L(x,v)> A(R)$.

We will show the lemma by induction. First let us prove that $|v_0|<K$:  suppose by contradiction
 that $|x_1-x_0|>K$. We choose $s_0\in\zn$   such that $|x_0+s_0-x_1|<R$, then the  path $(\tilde x_0,...,\tilde x_k)=(x_0+s_0,x_1,...,x_k)$ is such that $A_{L-\Hh_h}(\tilde x_0,...,\tilde x_k)<A_{L-\Hh_h}( x_0,...., x_{k})=S_h^k(x, z)$, which is a contradiction. Suppose we have proved that $|v_i|<K$ for all $0\leq i<j$ and suppose by absurd that $|v_j|>K, $ we choose $s_{j-1}\in\zn$   such that $|x_{j-1}+s_{j-1}-x_j|<R$, then the path $(\tilde x_0,...,\tilde x_k)=(x_0+s_{j-1},...,x_{j-1}+s_{j-1},x_j,...,x_k)$ is such that $A_{L-\Hh_h}(\tilde x_0,...,\tilde x_k)<A_{L-\Hh_h}( x_0,...., x_{k})$, which is a contradiction, hence $|v_i|<K$, for all $0\leq i\leq j$.
 \end{proof}

\begin{proposition} \label{calibrada}For any   $z\in\Omega_h(L)$  the function  $u(\cdot) =  \bold{h}_h (\cdot,z)$ is a calibrated subaction.
\end{proposition}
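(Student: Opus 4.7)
My approach is to verify the two inequalities in the definition of a calibrated subaction separately. The inequality $\bold{h}_h(x,z) \leq \inf_v\{\bold{h}_h(x+hv,z) + hL(x,v) - h\Hh_h\}$ is a direct consequence of the subaction property already at hand: combining property (iii) with the one-step bound $S_h(x, x+hv) \leq hL(x,v) - h\Hh_h$ yields $\bold{h}_h(x,z) \leq hL(x,v) - h\Hh_h + \bold{h}_h(x+hv, z)$ for every $v$. So the remaining task is to prove the reverse inequality and to exhibit a minimizer $v^*$.

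The plan for this is to extract near-minimizers of $S^k_h(x,z)$ and read off their first step. I will pick $k_n \to \infty$ with $S^{k_n}_h(x,z) \to \bold{h}_h(x,z)$, and for each $n$ select an actual minimizer $(x_0^n, \ldots, x_{k_n}^n) \in \mathcal{P}_{k_n}(x + s_n, z)$ realizing $S^{k_n}_h(x,z)$; existence of the minimizer follows from Lemma \ref{minimal}, which confines the search to paths with $|v_i^n| \leq K$ and hence to a compact set (the shift $s_n$ may be restricted to a finite family since the total displacement is at most $hk_nK$ and the endpoint lies in the fundamental domain). The first velocity $v_0^n = (x_1^n - x_0^n)/h$ is then bounded, so along a subsequence $v_0^n \to v^*$. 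The tail $(x_1^n, \ldots, x_{k_n}^n)$ is then an admissible competitor for $S^{k_n - 1}_h(y_n, z)$, where $y_n$ denotes the class of $x + hv_0^n$ in $\mathbb{T}^N$ (one checks that $x_1^n - y_n \in \mathbb{Z}^N$), and using the $\mathbb{Z}^N$-periodicity of $L$ to rewrite $L(x_0^n, v_0^n) = L(x, v_0^n)$ this gives
\[
S^{k_n - 1}_h(y_n, z) \leq S^{k_n}_h(x,z) - h L(x, v_0^n) + h \Hh_h.
\]

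To close the argument I want to replace $y_n$ by $y^* := [x + hv^*]_{\mathbb{T}^N}$. For this I will establish a Lipschitz bound $|S^{k}_h(y, z) - S^{k}_h(y', z)| \leq C|y - y'|$ with a constant $C$ independent of $k$, by a path-surgery: take a minimizer of $S^k_h(y, z)$ with velocities bounded by Lemma \ref{minimal}, keep the points $x_1, \ldots, x_k$ and replace the base point $y$ by $y'$ (adjusting the shift so $y' + s'$ is close to $y + s$); only the first step is modified, and by continuity of $L$ on the compact set $\mathbb{T}^N \times \overline{B}(0, K+1)$ the change in action is $O(|y - y'|)$ uniformly in $k$. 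Combined with $y_n \to y^*$ in $\mathbb{T}^N$ this gives $\liminf_n S^{k_n - 1}_h(y^*, z) \leq \bold{h}_h(x,z) - hL(x, v^*) + h\Hh_h$, and since $\bold{h}_h(y^*, z) = \liminf_k S^k_h(y^*, z) \leq \liminf_n S^{k_n - 1}_h(y^*, z)$ by definition, we conclude $\bold{h}_h(x,z) \geq \bold{h}_h(x + hv^*, z) + hL(x, v^*) - h\Hh_h$. Together with the easy direction this shows the infimum in the calibration identity is attained at $v^*$, so $u = \bold{h}_h(\cdot, z)$ is a calibrated subaction.

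The step I expect to require the most care is the uniform Lipschitz bound on $S^k_h(\cdot, z)$: properly handling the $\mathbb{Z}^N$-shift parameter in the definition of $\mathcal P_k$ and controlling the change in the first-step velocity $(x_1 - y)/h \mapsto (x_1 - y')/h$ so that a single constant $C$ works uniformly in $k$ and in the choice of near-minimizer. Once that uniform estimate is in hand, the rest of the argument is a clean limiting procedure combining Lemma \ref{minimal}, $\mathbb{Z}^N$-periodicity of $L$, and the definition of the Peierls barrier as a liminf.
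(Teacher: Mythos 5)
Your argument is correct, and it reaches the conclusion by a route that is genuinely different in its key step from the paper's. The paper, having extracted the limiting first point $x_1=\lim_n(x_1^n-s_n)$ exactly as you do, then invokes the hypothesis $z\in\Omega_h(L)$ (which gives $\bold{h}_h(z,z)=0$ and, via property (iii), $\bold{h}_h(\cdot,z)\leq S_h^k(\cdot,z)$ for every $k$) to write for each fixed $n$ the pointwise bound $\bold{h}_h(x_1,z)\leq S_h^{j_n-1}(x_1,z)\leq A_{L-\Hh_h}(x_1+s_n,x_2^n,\ldots,x_{j_n}^n)$, and then passes to the limit using the continuity of $L$ on the single modified link. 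You instead prove and use a uniform-in-$k$ Lipschitz bound on $S_h^k(\cdot,z)$ (a statement stronger than the paper's Lemma \ref{sx}, which concerns only $S_h$ and the other variable) and combine it with the elementary observation that a $\liminf$ over all $k$ is dominated by the $\liminf$ along the subsequence $k_n-1$; this sidesteps the inequality $\bold{h}_h(\cdot,z)\leq S_h^k(\cdot,z)$ and with it the use of $z\in\Omega_h(L)$ in the calibration step (you still need it for finiteness of $\bold{h}_h(\cdot,z)$). Your version is a little more infrastructure (you must build and control the uniform Lipschitz estimate, including the shift bookkeeping, and justify existence of exact minimizers) but isolates a reusable quantitative lemma; the paper's version is shorter but leans on the structure of the Peierls barrier and the standing assumption on $z$. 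One small point worth cleaning up: Lemma \ref{minimal} as stated applies to a path that already realizes $S^k_h(x,z)$, so to get existence of an exact minimizer you should note that near-minimizers satisfy the same velocity bound by the identical cut-and-paste argument, after which the infimum is over a compact family of paths with only finitely many admissible shifts, exactly as you sketch.
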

\begin{proof}  For a point  $x\in\tn$, we want to find  $v\in\rn$ such that $$\bold{h}_h (x,z)-\bold{h}_h (x+hv,z)=hL(x,v)-h\Hh_h.$$
 By the definition of Peierls barrier there exist a sequence $j_n\to \infty$ and a sequence of paths
 $(x_0^n,..., x_{j_n}^n)\in\mathcal P_{j_n}(x+s_n,z)$, $s_n\in\zn$,  such that $$A_{L-\Hh_h}( x_0^n,...., x^n_{j_n})=S_h^{j_n}(x, z)\to\bold{h}_h(x,z).$$

As $|v_0^n|=|\frac{x+s_n-x_1^n}h|\leq K$, the sequence   $\{x_1^n-s_n\}$  has an accumulation point, say  $x_1$,  taking a subsequence if necessary, we can suppose that,   $\displaystyle x_1=\lim_{n}(x_1^{n}-s_n)$ and we define $\displaystyle v=\lim_{n}( \frac{x_1^n-x-s_n}h).$
Then, because $z\in \Omega_h(L)$,
$$\bold{h}_h (x_1,z)\leq S_h^{j_n-1}(x_1,z)\leq A_{L-\Hh_h}( x_1+s_n,x_2^n,...., x^n_{j_n}).$$
Hence
\begin{align*}
hL(x,v)-h\Hh_h+\bold{h}_h (x_1,z)&\leq\lim_{n\to\infty}\big[ hL(x+s_n,\frac{x_1^n-x-s_n}{h})-h\Hh_h\\
%&+hL(x_1,\frac{x_2^n-x_1-s_n}{h})-h L(x_1^n,v_1^n)
%\\
&\quad
+A_{L-\Hh_h}( x_1^n,...., x^n_{j_n}) \big]\\
& =\lim_{n\to\infty} A_{L-\Hh_h}( x_0^n,...., x^n_{j_n})     =\bold{h}_h(x,z).
\end{align*}
 Then
$$hL(x,v)-h\Hh_h\leq\bold{h}_h(x,z)-\bold{h}_h (x+hv,z).$$ As $\bold{h}_h(\cdot,z)$ is a subaction we have the other inequality. Hence $$hL(x,v)-h\Hh_h=\bold{h}_h(x,z)-\bold{h}_h (x+hv,z).$$\end{proof}

\Rm When $z\in\Omega_h(L)$ we have that $S_h(\cdot,z)=\bold{h}_h (\cdot,z). $

\begin{theorem}\label{teo12}
For a fixed value of $h$, if $u$ is a calibrated subaction, then for any $x$ we have
$$
u(x) = \inf_{p \in \Omega_h(L) }\{u(p) + S_h(x,p) \}.
$$
\end{theorem}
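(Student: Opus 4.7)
The plan is to prove two inequalities.

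For the ``$\leq$'' direction, iterating the subaction bound $u(x_i) \leq u(x_{i+1}) + h(L(x_i,v_i) - \overline{H}_h)$ along any path $(x_0,\ldots,x_k) \in \mathcal{P}_k(x+s,p)$ and using $\mathbb{Z}^N$-periodicity of $u$ gives $u(x) = u(x+s) \leq u(p) + A_{L-\overline{H}_h}(x_0,\ldots,x_k)$. Taking the infimum over $k$, $s$ and admissible paths yields $u(x) \leq u(p) + S_h(x,p)$ for every $p \in \Omega_h(L)$, and the infimum over $p$ completes this direction.

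For the ``$\geq$'' direction, I would exploit the calibration. For each $y \in \mathbb{T}^N$ pick some $v(y) \in \mathbb{R}^N$ realizing the infimum in the calibrated identity; such $v(y)$ is uniformly bounded thanks to superlinearity of $L$ and uniform boundedness of $u$. Starting from $y_0 = x$, build the forward orbit $y_{i+1} = y_i + h v(y_i) \pmod{\mathbb{Z}^N}$ together with its natural lift $x_i \in \mathbb{R}^N$ given by $x_{i+1} = x_i + h v(y_i)$. Telescoping the calibrated identity and translating the lifted path by the integer part of $x_k$ to obtain a path in $\mathcal{P}_k(x+s, y_k)$ gives
$$u(x) - u(y_k) = A_{L-\overline{H}_h}(x_0,\ldots,x_k) \geq S_h^k(x, y_k) \geq S_h(x, y_k).$$

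The main technical step, which I expect to be the principal obstacle, is to show that some accumulation point $p$ of the sequence $\{y_k\} \subset \mathbb{T}^N$ actually lies in $\Omega_h(L)$. Given such $p$ and $\epsilon > 0$, pick indices $k_1 < k_2$ with $y_{k_1}, y_{k_2}$ within a small $\delta$ of $p$. The sub-path $(x_{k_1}, \ldots, x_{k_2})$ has action $u(y_{k_1}) - u(y_{k_2}) = O(\delta)$ by Lipschitz continuity of $u$. Translating this sub-path by an appropriate integer and then perturbing only its first and last points so that it becomes a path from $p+s$ to $p$ alters the action by at most $O(\delta)$, since the uniform bound on $|v_i|$ and the local Lipschitz control of $L$ bound the change in the two affected segments; the construction is parallel to the one used in the proof of Proposition \ref{projsupp}. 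Choosing $\delta$ small enough produces a closed loop at $p$ of action less than $\epsilon$, so $p \in \Omega_h(L)$.

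Finally, pass to a subsequence $y_{k_n} \to p$. The same endpoint-perturbation trick applied to the short closed loops at $p$ yields $S_h(p, y_{k_n}) \to 0$ and $S_h(y_{k_n}, p) \to 0$, and together with the triangle inequality for $S_h$ this forces $S_h(x, y_{k_n}) \to S_h(x, p)$. Combining with continuity of $u$ gives $u(x) \geq u(p) + S_h(x, p) \geq \inf_{p' \in \Omega_h(L)} \{u(p') + S_h(x, p')\}$, completing the proof.
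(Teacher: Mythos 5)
Your argument is correct and follows essentially the same two-part strategy as the paper's proof: the easy $\leq$ inequality from iterating the subaction bound along any path, and the $\geq$ inequality by following the calibrated forward orbit, projecting to the torus, extracting an accumulation point $p$, showing $p\in\Omega_h(L)$ via the endpoint-perturbation estimate, and then lower-bounding $u(x)-u(p)$ by $S_h(x,p)$. The paper performs the final step by perturbing only the terminal point of the calibrated path so it lands exactly at $p$ and estimating the change in action there; you instead pass through $S_h(x,y_{k_n})\to S_h(x,p)$ via the triangle inequality and the Lipschitz continuity of $S_h$ (which is the paper's Lemma~\ref{sx}, proved later by exactly the same kind of endpoint perturbation), so the two routes are equivalent in content. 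One small wording issue: the uniform bound on the calibrating velocities $v(y)$ comes from superlinearity plus the (Lipschitz, for fixed $h$) control on increments of $u$, not just from $u$ being uniformly bounded as you state, though for fixed $h$ boundedness alone does suffice after dividing by $h$.
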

\begin{proof}
 By the definition of calibrated subaction  we have that
$$
u(x) \leq \inf_{p \in \Omega_h(L) }\{u(p) + S_h(x,p) \}.
$$
Let us now show
that $\displaystyle u(x) \geq \inf_{p \in \Omega_h(L) }\{u(p) + S_h(x,p) \}:$
%By hypothesis  we have $$ u(x)= \inf_{v \in \mathbb{R}^N } \, \{ u(x+h\, v) + h\, L(x,v) -  h\, \Hh_h \}$$
Fix $x\in \mathbb{T}^N$, we will denote $x_0=x$. As  $u$ is a calibrated subaction there exists $v_0$ such that
\[
u(x_0)=u(x_0+h\, v_0) + h\, L(x_0,v_0) -  h\, \Hh_h.
\]
Let $x_1:=x_0+h\, v_0$, we can construct a sequence
$(x_0,x_1,...,x_j,...)$ such  that for each $j>0$, $x_{j+1}=x_j+h\, v_j$, and  $ u(x_j)=u(x_{j+1}) + h\, L(x_j,v_j) -  h\, \Hh_h $. We project this points  in the torus, i.e., we choose $s_j\in\zn$ such that  $\bar x_j=x_j+s_j\in\tn.$
		
Let $p\in \mathbb{T}^n$ be a limit point of the sequence $\{\bar x_j\}$,
we claim that  $p\in \Omega_h(L)$. Indeed, suppose $\bar x_{j_m}\to p$.  We
can construct, for $n>m$, the following path: $(\tilde
x_0,...,\tilde x_{j_n-j_m})=:(p-s_{j_m},x_{j_m+1},...,x_{j_n-1},p-s_{j_n})$, hence
$$ A_{L-\Hh_h}( \tilde x_0,....,\tilde  x_{j_n-j_m})=A_{L-\Hh_h}(  x_{j_m},...,  x_{j_n})+$$
$$ +h(L(p-s_{j_m},\frac{x_{j_m+1}-p+s_{j_m}}h)-
L(x_{j_m},v_{j_m}) +L(x_{j_n-1},\frac{p-s_{j_n}-x_{j_n-1}}h) -L(x_{j_n-1},v_{j_n-1})).
$$
  As $A_{L-\Hh_h}(  x_{j_m},...,  x_{j_n})=u(x_{j_m})-u(x_{j_n})$,
given $\epsilon>0$, if $m$ is large enough, then
$$|A_{L-\Hh_h}( \tilde x_0,....,\tilde  x_{j_n-j_m})|< \epsilon, $$
i.e., $p\in \Omega_h(L)$. For this $p$ let us show that $$S_h(x,p)\leq u(x)-u(p).$$

Indeed, we consider the following path:
$(\tilde x_0,...,\tilde x_{j_m})=(x_0,...,x_{j_m-1},p-s_{j_m})$, then
$$A_{L-\Hh_h}( \tilde x_0,....,\tilde  x_{j_m})-u(x)+ u(p)=$$
%$$=h \sum_{j=0}^{j_m-1}(L - \Hh_h)(x_j,v_j)-h(L(x_{j_m-1},v_{j_m-1})+L(x_{j_m-1},p-x_{j_m-1}))-u(x)+ u(p)$$
%$$=u(x)-u(x_{j_m})-h(L(x_{j_m-1},v_{j_m-1})+L(x_{j_m-1},p-x_{j_m-1}))-u(x)+ u(p)=$$
$$=u(p)-u(x_{j_m})-hL(x_{j_m-1},v_{j_m-1})+hL(x_{j_m-1},\frac{p-s_{j_m}-x_{j_m-1}}h). $$
Hence, given $k>0$ there exists $m_k\in\nat$ such that if $m>m_k$ then %$(\tilde x_0,...,\tilde x_{m_n})$ such that $m_n\to\infty$ when $n\to\infty$, and
$$A_{L-\Hh_h}( \tilde x_0,....,\tilde  x_{j_m}) <u(x)- u(p)+1/k.$$
Finally, when $k\to \infty $ we obtain
 $$S_h(x,p)\leq u(x)-u(p),$$ and
$$
u(x) \geq \inf_{p \in \Omega_h(L) }\{u(p) + S_h(x,p) \}.
$$\end{proof}

\begin{proposition} $\mathcal O^{h}:=\{\psi\in C^{\infty}(\tn,\re)\;\;: \;\; \mathcal M_0^h(L+\psi)=\{\mu_h\} \mbox{ and }
\pi_1(\supp(\mu_h))=\Omega_h(L+\psi)\}$ is a generic set. Where $\mathcal
M_0^h$ denote the set of holonomic minimizing measures, i.e.,
probability
measures  in
$\Tt^n\times \Rr^n$
such that $\int L
d\mu_h =\Hh_h$ and $\int\varphi(x+hv)-\varphi(x)d\mu_h=0,
\,\,\,\forall \varphi\in C(\tn).$
\end{proposition}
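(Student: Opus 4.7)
The plan is to realise $\mathcal O^h$ as a countable intersection of open dense subsets of $C^\infty(\tn,\re)$, which is a Fréchet (hence Baire) space. I would write $\mathcal O^h \supset \mathcal U \cap \mathcal V$, where $\mathcal U$ is the set of $\psi$ for which $L+\psi$ has a unique holonomic minimizer $\mu_h$, and $\mathcal V$ is the set of $\psi$ for which $\pi_1(\supp\mu_h) = \Omega_h(L+\psi)$, and then argue that each is residual.

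For the uniqueness set $\mathcal U$: the functional $F(\psi):=\Hh_h(L+\psi) = \inf_{\mu\in\mathcal M_h}\int(L+\psi)\,d\mu$ is concave and continuous, being an infimum of continuous affine maps on $C^\infty(\tn,\re)$. For each fixed $\varphi\in C(\tn)$ the one-variable map $t\mapsto F(\psi+t\varphi)$ is concave, hence differentiable off a countable set of $t$'s. Choosing a countable dense family $\{\varphi_n\}\subset C(\tn)$, a standard Baire argument shows the set $A_n$ of $\psi$ at which the Gâteaux derivative of $F$ in the direction $\varphi_n$ exists is residual. On $\bigcap_n A_n$, all minimizers $\mu\in\mathcal M_0^h(L+\psi)$ assign the same value to $\int\varphi_n\,d\mu$, and by density the same value to $\int\varphi\,d\mu$ for every $\varphi\in C(\tn)$, so the projection of $\mu$ on $\tn$ is uniquely determined. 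Combined with the graph property following Proposition \ref{diff} (each $\mu_h$ sits on the graph of the Lipschitz velocity field $v(x)$ determined through $p=hL_x-L_v$ by any calibrated subaction), uniqueness of the projection lifts to uniqueness of $\mu_h$ itself, so $\mathcal U\supset\bigcap_n A_n$ is residual.

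For the support equality $\mathcal V$: one inclusion is Proposition \ref{projsupp}. For the reverse, fix a countable basis $\{B_n\}$ of $\tn$ and let
\[
V_n := \{\psi\in\mathcal U : B_n\cap\pi_1(\supp\mu_h)\neq\emptyset \text{ or } B_n\cap\Omega_h(L+\psi)=\emptyset\},
\]
so that $\mathcal V\supset\bigcap_n V_n$. Openness of $V_n$ in $\mathcal U$ follows from the uniform velocity bound for minimal paths (Lemma \ref{minimal}) together with continuity of $\Hh_h$ in $\psi$, which make $\Omega_h(L+\psi)$ upper semicontinuous in $\psi$ on $\mathcal U$ (where $\pi_1(\supp\mu_h)$ is unambiguously defined). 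For density I would argue as follows: given $\psi\in\mathcal U$ and $B_n$ with $B_n\cap\pi_1(\supp\mu_h)=\emptyset$, pick a non-negative $\varphi\in C^\infty(\tn)$ supported strictly inside $B_n$, positive on a slightly smaller open subset, of arbitrarily small $C^\infty$ norm. Since $\int\varphi\,d\mu_h=0$, the measure $\mu_h$ remains a minimizer for $L+\psi+\varphi$ and $\Hh_h(L+\psi+\varphi)=\Hh_h(L+\psi)$; but any $k$-path witnessing $y\in \{\varphi>0\}\cap\Omega_h(L+\psi+\varphi)$ must start and end near $y$, hence enter $\{\varphi>0\}$, accumulating a strictly positive extra action, a contradiction.

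The main obstacle is ensuring that every prospective nonwandering loop anchored in $B_n$ actually traverses the support of the bump. A single $\varphi$ eliminates nonwandering only at points of $\{\varphi>0\}$, so to cover all of $B_n$ one refines the basis and replaces $V_n$ by a doubly indexed family $V_{n,m}$, a bookkeeping step standard in perturbation arguments. The substantive point is that the perturbation leaves $\mu_h$ and $\Hh_h$ invariant precisely because $B_n$ is separated from $\pi_1(\supp\mu_h)$, which is why one intersects first with $\mathcal U$: on $\mathcal U$ the set $\pi_1(\supp\mu_h)$ is a well-defined closed subset of $\tn$, so the basis elements $B_n$ disjoint from it form a countable family on which the perturbation technique applies and yields density of $V_n$.
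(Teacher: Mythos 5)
Both you and the paper split the proposition into (i) genericity of uniqueness of the holonomic minimizer, handled by the concavity/G\^ateaux-differentiability argument of Ma\~n\'e type that the paper defers to \cite{CI}, and (ii) the support--equality part, and both rest (ii) on the same action inequality: adding a nonnegative smooth bump that vanishes away from the projected support forces any would-be nonwandering loop based outside the support to accumulate a strictly positive extra term, killing nonwandering there. The concrete difference is in how the residual set is assembled. The paper picks, for each $\psi_0\in\mathcal O_1^h$, a \emph{single} perturbation $\psi_1\geq 0$ whose zero set is exactly $\pi_1(\supp\mu_h)$; the action inequality then gives $\Omega_h(L+\psi_0+\psi_1)=\pi_1(\supp\mu_h)$ in one stroke, and since $\psi_1$ can be taken arbitrarily small this shows density of $\mathcal O^h$ in $\mathcal O_1^h$ (the $G_\delta$ structure itself is not exhibited but inherited from the references). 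You localize the bump to basis elements $B_n$ disjoint from $\pi_1(\supp\mu_h)$, trading the single global set equality for a countable intersection $\bigcap_n V_n$, which is the more honest Baire formulation but also the more technically demanding one.

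The demanding part is where your argument has a real gap, not merely bookkeeping. The condition $B_n\cap\Omega_h(L+\psi)=\emptyset$ is \emph{not} open in $\psi$ just because $\Omega_h$ is upper semicontinuous: upper semicontinuity gives stability of $\overline{B_n}\cap\Omega_h=\emptyset$, not of $B_n\cap\Omega_h=\emptyset$ (a point of $\Omega_h$ sitting on $\partial B_n$ may slide into $B_n$ under arbitrarily small perturbations while $\Omega_h$ nominally ``shrinks''). Likewise the other clause, $B_n\cap\pi_1(\supp\mu_h)\neq\emptyset$, is open precisely because on $\mathcal U$ the unique minimizer varies weak-$*$ continuously and supports are therefore \emph{lower} semicontinuous -- so the two halves of $V_n$ enjoy opposite semicontinuity, and the union is not obviously open. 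You would need to pass to nested basis pairs $\overline{B_n}\subset B_{n'}$ and replace $V_n$ by something whose openness can actually be read off from the two semicontinuities. Finally, your Baire argument is carried out inside the residual set $\mathcal U$, and turning ``open and dense in $\mathcal U$'' into ``residual in $C^\infty(\tn,\re)$'' deserves a sentence. The paper avoids all of this by proving only density and appealing to \cite{CI}, \cite{CP} for the genericity structure; your write-up aims higher but, as stated, does not land the openness step.
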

\begin{proof}
The proof that $\mathcal O_{1}^{h}:=\{\psi\in C^{\infty}(\tn,\re)\;\;:\;\; \mathcal M_0^h(L+\psi)=\{\mu_h\}\} $
is generic is similar to the one
 in the continuous case, see \cite {CI}.

Let $\psi_0\in \mathcal O_{1}^{h}, $ and $\psi_1\in C^{\infty}(\tn,\re)$ such that $\psi_1\geq 0$ and $\{x: \psi_1(x)=0\}=\pi_1(\supp(\mu_h))$. Then $\pi_1(\supp(\mu_h))\subset \Omega_h(L+\psi_0+\psi_1)$.

Claim: If $x_0\notin\pi_1(\supp(\mu_h))$ then $x_0\notin\Omega_h(L+\psi_0+\psi_1)$. Indeed, $\psi_1(x_0)>0$, and
 $$\bold h_h^{(L+\psi_0+\psi_1)}(x_0,x_0)=\liminf _{k\to\infty}\left(\inf_{s\in\zn}\inf_{\mathcal P_k(x_0+s,x_0)}\sum _{i=0}^{k-1} (L+\psi_0+\psi_1-\Hh_h)(x_i,v_i)    \right)\geq$$
$$ \liminf _{k\to\infty}\left(\inf_{s\in\zn}\inf_{\mathcal P_k(x_0+s,x_0)}\sum _{i=0}^{k-1}(L+\psi_0-\Hh_h)(x_i,v_i)  +\psi_1(x_0)  \right)=\bold h_h^{(L+\psi_0)}(x_0,x_0)+\psi_1(x_0).$$Hence $\pi_1(\supp(\mu_h))=\Omega_h(L+\psi_0+\psi_1)$.
\end{proof}

%To see that  the $\phi_h$ above (limit of subsequence)
%is unique (up to a constant), we need the following result.
\begin{proposition}\label{corresp}There exists a bijective correspondence between
the set of calibrated subactions and the set of functions
$f\in C(\Omega_h(L))$ satisfying $f( x)-f(z)\leq S_h(x,z)$,
 for all points $x,z$ in $\Omega(L)$.
\end{proposition}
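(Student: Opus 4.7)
The plan is to exhibit two inverse maps between the two sets. Given a calibrated subaction $u$, send it to its restriction $u\big|_{\Omega_h(L)}$. By Theorem \ref{teo12}, $u(x) = \inf_{p \in \Omega_h(L)}\{u(p) + S_h(x,p)\}$ for every $x$; specializing to $x \in \Omega_h(L)$ and using that $S_h(x,x) = 0$ there gives $u(x) - u(z) \leq S_h(x,z)$ for $x, z \in \Omega_h(L)$, and continuity on $\Omega_h(L)$ is inherited from that of $u$. Hence the restriction lies in the target set.

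Conversely, given $f \in C(\Omega_h(L))$ satisfying $f(x) - f(z) \leq S_h(x,z)$ on $\Omega_h(L) \times \Omega_h(L)$, I would define
$$ u(x) := \inf_{z \in \Omega_h(L)}\{f(z) + S_h(x,z)\}, $$
and verify: (i) $u\big|_{\Omega_h(L)} = f$; (ii) $u$ is continuous; (iii) $u$ is a forward-subaction; (iv) $u$ is calibrated. Claim (i) is immediate: the choice $z = y$ gives $u(y) \leq f(y) + S_h(y,y) = f(y)$ for $y \in \Omega_h(L)$, while the hypothesis on $f$ supplies the reverse inequality. Claim (iii) follows from the subadditivity of $S_h$ together with the one-step bound $S_h(x, x+hv) \leq h(L(x,v) - \Hh_h)$, which yields $u(x) \leq u(x+hv) + h(L(x,v) - \Hh_h)$ for every $(x,v)$. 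Continuity (ii) is a consequence of Proposition \ref{calibrada}: each map $x \mapsto S_h(x, z)$ with $z \in \Omega_h(L)$ is itself a calibrated subaction, hence semiconcave with the uniform modulus inherited from Theorem \ref{pfixo}, so this family is equicontinuous and the infimum $u$ inherits continuity.

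The heart of the proof is (iv), following the strategy of Proposition \ref{calibrada}. Fix $x \in \mathbb{T}^N$. For each $n$, choose $z_n \in \Omega_h(L)$ with $f(z_n) + S_h(x, z_n) < u(x) + 1/n$, and then a near-minimizing $k_n$-path from $x + s_n$ to $z_n$ realizing $S_h^{k_n}(x, z_n)$ up to error $1/n$. By Lemma \ref{minimal}, the initial velocity $v_0^n := (x_1^n - x - s_n)/h$ is bounded uniformly in $n$, so after extraction $v_0^n \to v$. Splitting the first step off the path and bounding the remaining action from below by $S_h(x + hv_0^n, z_n) \geq u(x+hv_0^n) - f(z_n)$ gives, in the limit, $u(x) \geq u(x+hv) + h(L(x,v) - \Hh_h)$. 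Combined with (iii) this is the calibration identity at $v$.

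The two compositions are then inverses: $u \mapsto u\big|_{\Omega_h(L)} \mapsto \inf_{z \in \Omega_h(L)}\{u(z) + S_h(\cdot, z)\}$ returns $u$ by Theorem \ref{teo12}, and $f \mapsto u \mapsto u\big|_{\Omega_h(L)}$ returns $f$ by (i). The main obstacle is (iv); the compactness supplied by Lemma \ref{minimal} is precisely what is needed to upgrade an approximate minimizing sequence to a genuine calibrating velocity.
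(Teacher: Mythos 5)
Your proof is correct and follows essentially the same route as the paper's: the same pair of inverse maps (restriction to $\Omega_h(L)$ one way, the Ma\~n\'e-potential infimum $u_f(\cdot)=\inf_{p\in\Omega_h(L)}\{f(p)+S_h(\cdot,p)\}$ the other), with Theorem \ref{teo12} giving that the composition returns $u$. The only presentational difference is that you re-derive the calibration of $u_f$ directly from near-minimizing paths via Lemma \ref{minimal}, whereas the paper extracts calibrating velocities from the fact (Proposition \ref{calibrada} together with the remark $S_h(\cdot,p)=\mathbf{h}_h(\cdot,p)$ for $p\in\Omega_h(L)$) that each $S_h(\cdot,p)$ is already a calibrated subaction --- the same argument, merely unwound.
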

The proof of this Proposition is similar to the proof of Theorem 13 in \cite{GL}.

\begin{proposition} \label{ergodica}Let   $\mu_h\circ \pi_1^{-1}$
be an  ergodic measure (with respect to the flow induced by $\bar \psi$),
and  $u,u'$ two calibrated subactions for $L$, then  $u-u'$ is constant in  $\pi_1(\supp(\mu_h))$.
\end{proposition}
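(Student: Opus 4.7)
The plan is to show that $u-u'$ is $\bar\psi$-invariant on $\pi_1(\supp(\mu_h))$ and then invoke ergodicity.

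First I would verify that any calibrated subaction $w$ satisfies the equality
\[
w(x) - w(x + h v(x)) = h\, L(x, v(x)) - h\, \Hh_h
\]
for every $(x, v(x)) \in \supp(\mu_h)$. The argument is the standard integration trick: since $w$ is a calibrated subaction, $w(x) - w(x + hv) \le h(L(x,v) - \Hh_h)$ pointwise on $\tn \times \rn$; integrating this inequality against the holonomic minimizing measure $\mu_h$, the left hand side vanishes by the holonomy constraint applied to $\varphi = w$, while the right hand side equals $h(\Hh_h - \Hh_h) = 0$. Hence the nonnegative continuous integrand $h(L(x,v) - \Hh_h) - w(x) + w(x + hv)$ is zero $\mu_h$-almost everywhere, and by continuity, on the whole support $\supp(\mu_h)$. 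Recall here that by the assumption following Proposition \ref{diff}, $\mu_h$ is concentrated on the graph of a continuous map $v(x)$, so the claim is meaningful pointwise.

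Applying this identity to both $u$ and $u'$ and subtracting kills the common right hand side, giving
\[
(u-u')(x) \;=\; (u-u')(x + h v(x)) \;=\; (u-u')(\bar\psi(x))
\]
for every $x \in \pi_1(\supp(\mu_h))$. Thus $u - u'$ is a continuous $\bar\psi$-invariant function on $\pi_1(\supp(\mu_h))$, which is itself $\bar\psi$-invariant (as already used in Proposition \ref{projsupp}).

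Finally, by the ergodicity assumption, every $\bar\psi$-invariant measurable function is constant $(\mu_h \circ \pi_1^{-1})$-almost everywhere. Since $u - u'$ is continuous and the support of $\mu_h \circ \pi_1^{-1}$ is exactly $\pi_1(\supp(\mu_h))$, this almost-everywhere constant value is attained everywhere on $\pi_1(\supp(\mu_h))$, which is the desired conclusion. The only genuinely nontrivial step is the first one, where one must be careful that the equality promoted from ``$\mu_h$-a.e.'' to ``everywhere on $\supp(\mu_h)$'' really holds; this uses continuity of $L$, $u$, $u'$, and continuity of the graph map $v(\cdot)$ on $\pi_1(\supp(\mu_h))$, all of which have been established earlier in the section.
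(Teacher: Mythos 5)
Your proof is correct and follows essentially the same route as the paper: both establish that every calibrated subaction achieves equality at points of $\supp(\mu_h)$, subtract to get $\bar\psi$-invariance of $u-u'$ on $\pi_1(\supp(\mu_h))$, and conclude by ergodicity. The only difference is that you re-derive the equality on the support via the standard integrate-the-holonomy-constraint argument, whereas the paper simply cites Theorem~5.4 of \cite{Gom} for that fact.
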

\begin{proof}It was shown in \cite{Gom} that the points of the support of the measure  $\mu_h$ are the form  $(x,v)=(x_0+hv_0,v)$ with  $(x_0,v_0)$ in the support of $\mu_h$. Take  $x\in\pi_1\supp(\mu_h)$, then  $x=x_0+hv_0$, hence
$$u(x_0)-u(x_0+hv_0)=hL(x_0,v_0)-h\Hh_h=u'(x_0)-u'(x_0+hv_0)$$% (pois para toda sub-ação calibrada  vale que $u(x+hv)-u(x)+L(x,v)=-h\Hh_h$ nos pontos $(x,v)\in\supp(\mu_h)$, veja \cite{Gom}).
Then  $u-u'=(u-u')\circ\bar \psi$
in  $\pi_1(\supp(\mu_h))$ and as  $\mu_h\circ\pi_1^{-1}$ is ergodic it follows that  $u-u'$ is constant in $\pi_1(\supp(\mu_h))$.\end{proof}

\begin{lemma} Suppose that  $L$ is generic and let $\mu_h$ be the unique minimizing measure, then the measure $\mu_h\circ \pi_1^{-1}$ is ergodic for the map  $\bar\psi$ (defined in the proof of the proposition   \ref{projsupp}).

\end{lemma}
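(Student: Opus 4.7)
The plan is to argue by contradiction using the uniqueness of the minimizing measure. Suppose $\mu_h\circ\pi_1^{-1}$ is not ergodic under $\bar\psi$. Then there exists a $\bar\psi$-invariant Borel set $A\subset\tn$ with $0<\alpha:=\mu_h(\pi_1^{-1}(A))<1$. Let $\tilde A=\pi_1^{-1}(A)\subset\tn\times\rn$ and $\tilde A^c=\pi_1^{-1}(A^c)$, and define the probability measures
\[
\nu_1 := \frac{1}{\alpha}\,\mu_h\big|_{\tilde A},\qquad \nu_2:=\frac{1}{1-\alpha}\,\mu_h\big|_{\tilde A^c},
\]
so that $\mu_h=\alpha\nu_1+(1-\alpha)\nu_2$.

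The first step is to verify that $\nu_1$ (and symmetrically $\nu_2$) is discrete holonomic. Recall from Proposition \ref{projsupp} that $\mu_h$ is concentrated on the graph $\{(x,v(x))\}$, and by construction $\bar\psi(x)=x+hv(x)\bmod\zn$. For any $\varphi\in C(\tn)$, $\zn$-periodicity gives $\varphi(x+hv(x))=\varphi(\bar\psi(x))$. Thus
\[
\int \varphi(x+hv)-\varphi(x)\,d\nu_1 = \int_\tn \varphi\circ\bar\psi - \varphi \, d(\nu_1\circ\pi_1^{-1}),
\]
and since $\nu_1\circ\pi_1^{-1}$ is the normalized restriction of the $\bar\psi$-invariant measure $\mu_h\circ\pi_1^{-1}$ to the invariant set $A$, it is itself $\bar\psi$-invariant, so the right-hand side vanishes. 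Hence $\nu_1\in\mathcal M_h$, and likewise $\nu_2\in\mathcal M_h$.

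The second step uses minimality: since $\Hh_h=\inf_{\mathcal M_h}\int L\,d\mu$, both $\int L\,d\nu_1\geq\Hh_h$ and $\int L\,d\nu_2\geq\Hh_h$. Because
\[
\Hh_h=\int L\,d\mu_h=\alpha\int L\,d\nu_1+(1-\alpha)\int L\,d\nu_2,
\]
equality must hold in both, so $\nu_1$ and $\nu_2$ are both discrete Mather measures for $L$. By the genericity assumption there is only one such measure, namely $\mu_h$, forcing $\nu_1=\nu_2=\mu_h$. But this is impossible: $\nu_1$ assigns full mass to $\tilde A$ while $\nu_2$ assigns full mass to the disjoint set $\tilde A^c$, so they cannot coincide (and neither can equal $\mu_h$, which would then charge both $\tilde A$ and $\tilde A^c$ with full mass). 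This contradiction yields ergodicity.

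The only delicate point I foresee is the holonomy verification for the restricted measure, since the holonomy constraint a priori couples $x$ and $x+hv$ across $\zn$-translates, not modulo $\zn$. This is resolved by the graph property from \cite{Gom} together with $\zn$-periodicity of test functions, as spelled out above; everything else is a standard ergodic decomposition argument.
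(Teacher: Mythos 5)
Your proof is correct, and it takes a genuinely different route from the paper's. You argue by ergodic decomposition: any nontrivial $\bar\psi$-invariant set $A$ would split $\mu_h$ as a proper convex combination $\alpha\nu_1+(1-\alpha)\nu_2$ of two mutually singular measures, each of which you show (via the graph property and $\zn$-periodicity of the test functions) still lies in $\mathcal M_h$; since $\int L\,d\nu_i\geq\Hh_h$ with $\alpha\int L\,d\nu_1+(1-\alpha)\int L\,d\nu_2=\Hh_h$, both are minimizers, contradicting uniqueness. The paper instead proves the stronger statement of \emph{unique} ergodicity: it lifts an arbitrary $\bar\psi$-invariant measure $\eta$ on $\Omega_h(L)$ to a holonomic measure $\mu$ supported in $\supp(\mu_h)$, and then invokes a calibrated subaction $u$ (Theorem 5.4 of \cite{Gom}), using $hL(x,v)=u(x)-u(x+hv)+h\Hh_h$ on $\supp(\mu_h)$ to see that $\int L\,d\mu=\Hh_h$ directly, forcing $\mu=\mu_h$ and hence $\eta=\mu_h\circ\pi_1^{-1}$. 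Your argument avoids the calibrated subaction entirely by exploiting that subprobabilities of a minimizer that remain holonomic are automatically minimizers (convexity), which is a bit more elementary and self-contained; the paper's argument costs the extra input from \cite{Gom} but yields unique ergodicity rather than just ergodicity. For the lemma as stated, and for its use in Proposition \ref{ergodica}, ergodicity alone suffices, so your proof is adequate.
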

\begin{proof} In proposition  \ref{projsupp} it was proved that  $\bar\psi$ is    $\mu_h\circ \pi_1^{-1}$-invariant. Let us show that
it is uniquely ergodic. Let  $\eta$ be a measure in the Borel sets of  $\Omega_h(L)=\pi_1(\supp(\mu_h))$,  invariant by  $\bar\psi:\Omega_h(L)\to\Omega_h(L)$. We define, for each  Borel set $B$ of  $\tn\times\rn$, $\mu(B)=\eta(\pi_1(B\cap\supp(\mu_h)))$, then  $\mu$ is a probability in $\tn\times\rn$, such that
\begin{itemize}
\item[(i)] $\supp(\mu)\subset\supp(\mu_h),$
\item[(ii)] $\mu\circ\pi_1^{-1}=\eta,$
\item[(iii)] $\mu\in\mathcal M_h$.
\end{itemize}
(i)
 and  (ii) are easily verified.   (iii): Let  $\varphi\in  C(\tn)$ be  a function, we have that

$$\int_{\tn\times\rn} \varphi(x+hv)d\mu(x,v)=\int_{\tn\times\rn} \varphi(x+hv(x)) d\mu(x,v) =\int_{\tn} \varphi\circ\bar \psi(x)d\eta(x)= $$
$$=\int_{\tn} \varphi(x)d\eta(x)=\int_{\tn\times\rn} \varphi(x)d\mu(x,v).$$

Let  $u$ be a calibrated subaction, by theorem 5.4 of  \cite{Gom} for each point  $(x,v)\in\supp(\mu_h)$ we have
$$ h L(x,v) = u(x)-u(x+hv)+h\Hh_h$$

By  (i) and  (iii) we have that
$$\int h L(x,v) d\mu(x,v)=\int (u(x)-u(x+hv)+h\Hh_h)\;d\mu(x,v)=h\Hh_h .$$
Hence  $\mu$ is a minimizing measure, but as we are supposing that minimizing measure is unique, we obtain  $\mu=\mu_h$. Therefore  $\eta=\mu_h\circ\pi_1^{-1}$, then $\mu_h$ is uniquely ergodic.\end{proof}

\begin{theorem}\label{unicidade}If  $L$ is generic in the Man\'e's sense, then the set of calibrated subactions is unitary  (up to constant).

\end{theorem}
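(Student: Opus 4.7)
The plan is to chain together the structural results proven earlier in the section so that uniqueness on the non-wandering set (where the dynamics does the hard work) propagates to uniqueness everywhere (where the infimum formula does the work). Let $u$ and $u'$ be two calibrated forward-subactions for $L$. The goal is to exhibit a constant $c\in\re$ such that $u - u' \equiv c$ on $\tn$.

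First I would invoke the last lemma stated above: since $L$ is generic in Mañé's sense, the discrete Mather measure $\mu_h$ is unique and the projection $\mu_h\circ\pi_1^{-1}$ is ergodic with respect to the map $\bar\psi$. Proposition \ref{ergodica} then yields a constant $c$ with
\[
u(x) - u'(x) = c \quad \text{for every } x \in \pi_1(\supp(\mu_h)).
\]
By the genericity assumption recorded above (the set $\mathcal O^h$), we have furthermore that $\pi_1(\supp(\mu_h)) = \Omega_h(L)$, so the equality $u - u' = c$ holds on the whole non-wandering set $\Omega_h(L)$.

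Next I would extend this equality from $\Omega_h(L)$ to all of $\tn$. Here is where Theorem \ref{teo12} is the key tool: for any $x \in \tn$,
\[
u(x) = \inf_{p\in \Omega_h(L)} \{\, u(p) + S_h(x,p)\,\}, \qquad u'(x) = \inf_{p\in \Omega_h(L)} \{\, u'(p) + S_h(x,p)\,\}.
\]
Since on $\Omega_h(L)$ we have $u(p) = u'(p) + c$, substituting in the first formula and factoring out the constant gives $u(x) = c + \inf_{p\in\Omega_h(L)}\{u'(p)+S_h(x,p)\} = u'(x) + c$ for every $x\in\tn$. This shows that two calibrated subactions differ by a constant, which is the desired uniqueness modulo constants.

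The main obstacle in this argument is not any single step but rather the need to have already set up the correct bridge between the dynamical side (non-wandering points, Mather measure support) and the variational side (Peierls barrier / Mañé potential representation of calibrated subactions). Once Theorem \ref{teo12} and Proposition \ref{ergodica} are in place, together with the genericity identification $\pi_1(\supp(\mu_h)) = \Omega_h(L)$, the proof is essentially a two-line chain, and no further estimates or limit arguments are required.
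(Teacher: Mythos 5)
Your argument is correct and rests on the same three pillars as the paper's proof (the genericity identification $\pi_1(\supp(\mu_h))=\Omega_h(L)$, the ergodicity lemma, and Proposition \ref{ergodica}), but you close the argument by a slightly different route. The paper works on the level of restrictions: it starts from two functions $f,f'$ on $\Omega_h(L)$ satisfying $f(x)-f(\bar x)\le S_h(x,\bar x)$, builds the associated calibrated subactions $u_f,u_{f'}$, applies Proposition \ref{ergodica} to conclude $u_f-u_{f'}$ is constant on $\Omega_h(L)$, and then invokes the bijective correspondence of Proposition \ref{corresp} to transfer uniqueness of the restrictions into uniqueness of the calibrated subactions themselves. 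You instead apply Proposition \ref{ergodica} directly to the two given calibrated subactions $u,u'$ and then extend the constant $c$ from $\Omega_h(L)$ to all of $\tn$ by substituting into the representation formula of Theorem \ref{teo12}. Since Proposition \ref{corresp} is itself established via that representation formula, your proof is essentially an inlined version of the paper's argument that skips the detour through the abstract bijection; it is a bit shorter and arguably more transparent, at the cost of not recording the structural statement (the bijection) that the paper's version preserves as a free byproduct.
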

\begin{proof} By hypothesis we have that  $\pi_1(\supp(\mu_h))=\Omega_h(L)$.

Let $f,f':\Omega_h(L)\to\re$ be continuous functions satisfying  $f( x)-f(\bar x)\leq S_h(x,\bar x)$, $f'( x)-f'(\bar x)\leq S_h(x,\bar x)$. As in proposition \ref{calibrada}, we construct two calibrated subactions $u_f, u_{f'}$  such that
$f-f'=u_f-u_{f'}$ in  $\Omega_h(L)$, now by the proposition  \ref{ergodica} $u_f-u_{f'}$ is constant in
$\pi_1(\supp(\mu_h))=\Omega_h(L)$. Hence the set  $\{f\in C(\Omega_h(L)\;:\;f( x)-f(\bar x)\leq S_h(x,\bar x) \}$ is unitary, by proposition  \ref{corresp} we conclude that the set of calibrated subactions is unitary.\end{proof}

%E portanto obtivemos a unicidade da $\phi_h$.

\Rm Note that the definition of the Lagrangian be generic depends on the property  $P$ we consider. We fix a sequence $h_n\to 0$, for each $h_n$ we consider the  property  $P_n$ given by: $\mathcal M_0^{h_n}(L+\psi)=\{\mu_{h_n}\} \mbox{ and }
\pi_1(\supp(\mu_{h_n}))=\Omega_{h_n}(L+\psi)$ .  Then, for each $n$, there exists a generic set  $\mathcal O^n\in C^{\infty}(\tn,\re)$ where  $P_n$ is verified.

We define
$$ \mathcal O=\bigcap_{n\geq 0}  \mathcal O^n$$
Hence, if $\psi\in \mathcal O$ then  $L+\psi$ has the property    $P_n$ for each  $n$.

\begin{corollary}Suppose that the   Lagrangian $L$  satisfy the hypothesis (1) to (3),
%has bounded Lipschitz constant in $v$
and is generic in the sense of the previous remark. Let $u_h(\cdot)=\bold{h}_h(\cdot, x)$, where $x\in \Omega_{h}(L)$, define $\tilde u_h=u_h -u_h(0)$. Then $\tilde u_h$ converges to the unique viscosity solution $\phi_0$ of the H-J equation, which can be show to be $h(\cdot,z)$, where $z\in \mathcal A$, and $h$ is the Peierls barrier.
\end{corollary}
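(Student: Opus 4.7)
The plan is to combine the uniqueness results for calibrated subactions (Theorem \ref{unicidade}) with the compactness and convergence results already established (Theorems \ref{pfixo}, \ref{teo3}, \ref{teo5}), plus classical facts about the Peierls barrier from weak KAM theory.

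First, by Proposition \ref{calibrada}, for each $h>0$ the function $u_h(\cdot)=\bold{h}_h(\cdot,x)$ with $x\in\Omega_h(L)$ is a calibrated forward-subaction. Under the generic hypothesis, Theorem \ref{unicidade} says that the calibrated subaction is unique up to an additive constant. On the other hand, Theorem \ref{teo3}(b) supplies, for each $h$ small enough, another calibrated subaction $\phi_h$ obtained as the uniform limit $\phi_{\epsilon,h}\to\phi_h$ as $\epsilon\to 0$, and moreover $\phi_h$ inherits the uniform semiconcavity modulus $\bar C$ given by Theorem \ref{pfixo}. By the uniqueness we thus have $u_h=\phi_h + c_h$ for some constant $c_h$, so the normalization $\tilde u_h = u_h - u_h(0)$ satisfies $\tilde u_h = \phi_h - \phi_h(0)$ and therefore enjoys the same uniform semiconcavity (and hence uniform Lipschitz) bound as $\phi_h$.

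Second, I would apply Arzelà--Ascoli: since $\{\tilde u_h\}$ is uniformly Lipschitz on $\tn$ and $\tilde u_h(0)=0$, any sequence $h_n\to 0$ has a uniformly convergent subsequence. Let $u_\infty$ be such a limit. Passing to the limit in the calibration identity
\[
\tilde u_h(x) = \inf_{v\in\rn}\{\tilde u_h(x+hv) + h\, L(x,v) - h\, \Hh_h\},
\]
together with $\Hh_h\to \Hh_0$ from Theorem \ref{teo5}(a) and the superlinearity of $L$ (which keeps the relevant $v$'s in a bounded set, as in the proof of Theorem \ref{teo5}), yields that $u_\infty$ is a viscosity solution of $H(\nabla\phi,x) = -\Hh_0$; this passage from discrete to continuous calibration is the standard Fathi weak KAM argument and is also carried out in \cite{Gom}. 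Alternatively one may simply invoke Theorem \ref{teo5}(b) directly to conclude $\phi_h - \phi_h(0)\to \phi_0$, using that $\phi_0$ is unique up to constants under the genericity assumption (see the Remark preceding Corollary \ref{bpeierls}). Since all subsequential limits must equal $\phi_0$, the full sequence $\tilde u_h$ converges uniformly to $\phi_0$.

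Third, to identify $\phi_0$ with $h(\cdot,z)$, $z\in\mathcal A$, one uses the continuous-time analogue of Theorem \ref{teo12}: every viscosity solution $\phi$ of the H-J equation can be recovered from its values on the Aubry set via the Peierls barrier, i.e.\ $\phi(x) = \inf_{z\in\mathcal A}\{\phi(z)+h(x,z)\}$ (Fathi \cite{Fa}). Under the genericity assumption there is a single static class, so $h(\cdot,z)$ is independent of $z\in\mathcal A$ up to an additive constant, and it is itself a viscosity solution; by uniqueness of $\phi_0$ modulo constants, $\phi_0$ equals $h(\cdot,z)$ for any $z\in\mathcal A$ (after fixing the normalization).

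The main obstacle I expect is the passage to the limit in the discrete calibration equation: one needs to verify that the minimizers $v_h(x)$ remain uniformly bounded (so that the inf can be taken over a compact set) and that the limiting infimum is attained by a trajectory realizing the viscosity subsolution inequality. This is precisely what the uniform Lipschitz bound on $\phi_h$ combined with superlinearity of $L$ gives, exactly as in the proof of Theorem \ref{teo5}(b); so the argument should go through without further difficulty.
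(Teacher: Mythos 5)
Your proposal is correct and follows essentially the same route as the paper's own (very terse) proof: establish that $u_h=\bold{h}_h(\cdot,x)$ is a calibrated subaction via Proposition \ref{calibrada}, invoke Theorem \ref{unicidade} to identify $u_h$ with $\phi_h$ up to a constant, use uniform semiconcavity to get equicontinuity and compactness, and then conclude convergence to $\phi_0$ from Theorem \ref{teo5}(b). One small point of divergence worth noting: the paper asserts directly that hypothesis (3) makes $u_h$ uniformly semiconcave, whereas you transfer the semiconcavity bound from $\phi_h$ (established in Theorem \ref{pfixo}) to $u_h$ through the uniqueness of calibrated subactions; your version is arguably more careful, since the paper does not spell out why (3) gives a uniform semiconcavity modulus for the Peierls barrier itself, while the transfer argument only needs the bound on $\phi_{\epsilon,h}$ already proved.
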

\begin{proof}
The hypothesis (3) implies that $u_h$ is semiconcave (uniformly in $h$) and hence locally Lipschitz. Thus, by periodicity $\tilde u_h$ is an uniformly bounded and equicontinuous
family.
It follows by theorem \ref{unicidade}, proposition \ref{calibrada} and item (b) of theorem \ref{teo5}.
\end{proof}
Here we finish the part strictly necessary for the results
required by the first part of the paper.
\subsection{Existence of a separating subaction}
In this last part we are interested in showing a discrete  analog
of the \cite{FS}, that is the existence of a separating subaction,
as in \cite{GLT}.  We add  Theorem (\ref{FS}) in
order to have a more complete understanding of the Discrete Time
Aubry-Mather Problem.

 For this goal we need to consider the
Hamiltonian defined in the following way.

\begin{definition} Let $L(x,v):\tn\times\rn\to \re$ be the Lagrangian, we define $$\tilde{H}(p,x)=\max_v \{p\cdot v - L(x,v)\}.$$
\end{definition}

The equation
$$\max_v \left\{\frac{u(x+hv)-u(x)}{h}-L(x,v)\right\}\leq -\Hh_h$$
can be seen as a discrete  analogous  of the  Hamilton-Jacobi equation
$$\max_v \{\nabla u(x)\cdot v - L(x,v)\}=\tilde{H}(\nabla u(x),x)\leq -\Hh_0.$$

\begin{definition}For a fixed value $h>0$, a continuous function
  $u:\tn\to\re$ is called a subaction if for all  $x\in\tn$ we have
$$\max_v \left\{{u(x+hv)-u(x)}-hL(x,v)\right\}\leq -h\Hh_h.$$
\end{definition}

\begin{definition}
We say that a  subaction $u$  is separating if
$$\max_v \left\{u(x+hv)-u(x)-hL(x,v)\right\}= -h\Hh_h \iff x\in\Omega_h(L).$$
\end{definition}

Our main result of this last part is the following:
\begin{theorem}\label{FS} There exists a separating subaction.
 \end{theorem}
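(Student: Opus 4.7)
I would prove the existence of a separating subaction by following the scheme of Fathi--Siconolfi \cite{FS}, as implemented in the discrete (symbolic) setting of \cite{GLT}. The overall strategy is to build a countable family of subactions, each strictly sub at one chosen point outside $\Omega_h(L)$ but agreeing with a canonical calibrated subaction near $\Omega_h(L)$, and then average them.

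First I would record the convexity principle: the defining inequality $u(x) \le u(x+hv) + h(L(x,v) - \Hh_h)$ is linear in $u$, so every convex combination (and every uniform limit) of subactions is a subaction. Writing $D_u(x) := \max_v\{u(x+hv) - u(x) - hL(x,v)\} + h\Hh_h$ for the defect (so $D_u \le 0$ by definition of subaction), the inequality $D_u(x) \le \sum_n c_n D_{u_n}(x)$ holds whenever $u = \sum_n c_n u_n$ with $c_n>0$ and $\sum c_n = 1$. In particular a strictly negative defect of a single summand at a point forces a strictly negative defect of the sum at that point.

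The main technical step is a Key Lemma: for each $x_0 \in \mathbb{T}^N \setminus \Omega_h(L)$ there exists a subaction $w_{x_0}$ with $D_{w_{x_0}}(x_0) < 0$ and such that $w_{x_0}$ coincides, up to an additive constant, with the calibrated subaction $\phi := \bold{h}_h(\cdot, z_0)$ of Proposition \ref{calibrada} (for a fixed $z_0 \in \Omega_h(L)$) on an open neighbourhood of $\Omega_h(L)$. The only source of strictness is the positivity $\alpha := S_h(x_0,x_0) > 0$, which characterises $x_0 \notin \Omega_h(L)$. One constructs $w_{x_0}$ as a careful combination of the two subactions $\phi(\cdot)$ and $y \mapsto S_h(y,x_0)$ (both of which are subactions by the triangle inequality for $S_h$) with parameters chosen so that (i) the resulting function is a subaction everywhere, (ii) it produces a strict defect at $x_0$ of size at least $\alpha/2$, and (iii) it reduces to $\phi$ (plus a constant) in an open set containing $\Omega_h(L)$. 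Point (iii) is secured by a loop argument: for $y \in \Omega_h(L)$ there are quasi-closed loops at $y$ of arbitrarily small action, which concatenated with any detour $y \to x_0 \to y$ yield loops based at $x_0$; since each such loop has action $\ge \alpha$, the detour through $x_0$ must be expensive by at least $\alpha$, and this margin is exactly what lets the $S_h(\cdot,x_0)$-term be dominated by $\phi$ near $\Omega_h(L)$.

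Granted the Key Lemma, the proof concludes as follows. Pick a countable dense subset $\{x_n\}_{n\ge 1}$ of $\mathbb{T}^N \setminus \Omega_h(L)$ and let $w_n := w_{x_n}$ be the subactions produced by the Lemma, normalised so that $\sup_n \|w_n\|_\infty < \infty$. Define
\[
u := \sum_{n=1}^{\infty} 2^{-n}\, w_n.
\]
Uniform summability makes $u$ continuous, and the convexity principle makes $u$ a subaction. For the separating property: if $x \in \Omega_h(L)$, each $w_n$ equals $\phi$ (plus a constant) in a neighbourhood of $x$, so each $w_n$ is calibrated at $x$ in the direction $v^*(x)$ determined by the unique calibrated subaction $\phi$ (Theorem \ref{unicidade}, together with the fact that $x + hv^*(x) \in \Omega_h(L)$ by invariance of the Aubry set); the sum inherits calibration in that same direction, so $D_u(x) = 0$. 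If instead $x \notin \Omega_h(L)$, density yields some $x_n$ close to $x$, and continuity of the defect (inherited from the uniform semiconcavity of $w_n$ and continuity of $L$) together with $D_{w_n}(x_n) \le -\alpha_n/2$ force $D_{w_n}(x) < 0$, whence $D_u(x) \le 2^{-n}\, D_{w_n}(x) < 0$. Hence $D_u(x) = 0$ if and only if $x \in \Omega_h(L)$, which is the separating property. The principal obstacle is the Key Lemma: arranging simultaneously that $w_{x_0}$ is globally a subaction, is strictly sub at $x_0$, and does \emph{not} disturb the calibration of $\phi$ anywhere on $\Omega_h(L)$; this is precisely where the loop argument above, i.e. the discrete analogue of the delicate step of \cite{FS} carried out in \cite{GLT}, is needed.
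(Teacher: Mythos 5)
Your overall template --- build one subaction per ``bad'' point with a strict defect there, then take a geometric convex combination --- is indeed the one the paper uses, but you import from Fathi--Siconolfi a requirement that the paper shows is unnecessary in the discrete setting, and the step where you place all the difficulty (your ``Key Lemma'') is both unproven and avoidable.

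The paper's proof simply takes the building blocks $w_{x_0}(\cdot)=S_h(x_0,\cdot)$, the Ma\~n\'e potential with first slot frozen. This is automatically a subaction (triangle inequality for $S_h$), and at $x_0\notin\Omega_h(L)$ it already has defect $\le -S_h(x_0,x_0)<0$ uniformly in $v$, because $S_h(x_0,x_0+v)\le hL(x_0,v)-h\Hh_h$ from the one-step path while $S_h(x_0,x_0)>0$. No modification near the Aubry set is needed, and no agreement with the calibrated subaction $\phi$ is arranged. The reason this suffices is Lemma \ref{igualdade}: \emph{every} subaction achieves equality at every point of $\Omega_h(L)$, via the loop sequences of Proposition \ref{peierl}; so the averaged $u$ is automatically tight on $\Omega_h(L)$ with no coordination among the summands. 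Your Key Lemma --- forcing each $w_{x_0}$ to coincide with $\phi$ on a full neighborhood of $\Omega_h(L)$ by a ``careful combination'' and ``loop argument'' --- is the delicate Fathi--Siconolfi step needed in the continuous $C^1$ setting (where averaging must not destroy differentiable calibration at the Aubry set), but here it is extra baggage: you are solving, by a hard unproven construction, a problem that Lemma \ref{igualdade} dissolves. Moreover your route needs the uniqueness Theorem \ref{unicidade}, hence genericity, which the paper's proof of Theorem \ref{FS} does not use at all.

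Two minor further points. First, the paper covers $\tn\setminus\Omega_h(L)$ with a countable family of open sets $V_{x_j}$ (Lindel\"of), rather than taking a countable dense subset and appealing to continuity of the defect; both work, but the open-cover form gives the strict gap at each $x\notin\Omega_h(L)$ directly as the $k$-th summand's uniform surplus over $V_{x_k}$, sidestepping any continuity-of-defect discussion. Second, you should make sure the strict defect is uniform over $v$ at the chosen point and on its neighborhood; the paper gets this from the Lipschitz bound of Lemma \ref{sx} together with superlinearity of $L$ (so the sup over $v$ is realized on a compact set). With the building blocks replaced by $S_h(x_j,\cdot)$ and the Key Lemma replaced by Lemma \ref{igualdade}, your averaging argument closes exactly as the paper's does.
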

Before proceeding with the proof, we need some preliminary results.
\vspace{0.3cm}
\begin{lemma}\label{igualdade} For any subaction $u$ and all
$x\in\Omega_h(L)$ we have
$$\max_v \left\{u(x+hv)-u(x)-hL(x,v)\right\}= -h\Hh_h.$$
\end{lemma}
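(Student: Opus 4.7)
The subaction property immediately gives $\max_v\{u(x+hv)-u(x)-hL(x,v)\}\leq -h\Hh_h$, so only the reverse inequality requires work. My plan is to argue by contradiction combining the non-wandering hypothesis with a telescoping sum.

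Suppose for some $x\in\Omega_h(L)$ we had
\[
\max_v\{u(x+hv)-u(x)-hL(x,v)\}\leq -h\Hh_h-\delta
\]
for some $\delta>0$. First I would justify that the maximum over $v\in\rn$ is actually attained on a bounded set: since $u$ is continuous on the compact torus it is bounded, while $-hL(x,v)\to-\infty$ uniformly in $x$ as $|v|\to\infty$ by hypothesis (1). So there exists $R>0$ (depending only on $u$ and $L$, not on $x$) such that only $v$ with $|v|\leq R$ can realize values close to the supremum.

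Next I would upgrade the pointwise strict inequality to a uniform one on a neighborhood. By uniform continuity of $u$ and of $L$ on $\tn\times \bar B_R$, there is a neighborhood $U$ of $x$ in $\tn$ such that for every $y\in U$ and every $v\in\rn$,
\[
u(y+hv)-u(y)-hL(y,v)\leq -h\Hh_h-\tfrac{\delta}{2}.
\]
(The restriction to $|v|\leq R$ suffices by the previous paragraph, possibly after enlarging $R$ slightly.)

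Now I invoke the non-wandering property with the tolerance $\delta/4$: there exist $k\geq 1$, $s_k\in\zn$, and a $k$-path $(x_0,\dots,x_k)\in\mathcal P_k(x+s_k,x)$ with $|A_{L-\Hh_h}(x_0,\dots,x_k)|<\delta/4$. Since $u$ and $L$ are $\zn$-periodic, $x_0$ and $x_k$ project to the same point $x\in U$. For every $i$ the subaction inequality gives $u(x_{i+1})-u(x_i)-hL(x_i,v_i)\leq -h\Hh_h$, and for $i=0$ (and $i=k-1$ if needed) the sharper bound from $U$ applies, giving an extra $-\delta/2$ in one step. Summing telescopically and using $u(x_k)=u(x+s_k+(\text{something}))$... more cleanly, $u(x_0)=u(x+s_k)=u(x)=u(x_k)$ by periodicity, so
\[
-h\sum_{i=0}^{k-1} L(x_i,v_i)\leq -kh\Hh_h-\tfrac{\delta}{2},
\]
which rearranges to $A_{L-\Hh_h}(x_0,\dots,x_k)\geq \delta/2$, contradicting the choice of the path.

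The only real subtlety is the uniform-in-$v$ step: I need to be sure that the strict inequality at the single point $x$ propagates to a neighborhood for \emph{all} $v$ simultaneously. This is exactly where uniform superlinearity (hypothesis (1)) together with boundedness of $u$ is essential, since it confines the relevant velocities to a compact set on which $L$ is uniformly continuous.
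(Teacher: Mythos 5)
Your proof is correct, and it is a genuinely different route from the paper's. The paper first establishes Proposition~\ref{peierl} (producing, via Lemma~\ref{minimal} and a diagonal extraction on minimal return paths, a sequence $(x_0,x_1,\dots)$ from $x$ along which $\bold{h}_h(x_k,x_0)\leq -A_{L-\Hh_h}(x_0,\dots,x_k)$), then sandwiches $u(x_k)-u(x_0)$ between $A_{L-\Hh_h}(x_0,\dots,x_k)$ and $-\bold{h}_h(x_k,x_0)$ to get equality at every step, in particular at $k=1$. You instead argue by contradiction directly from the definition of $\Omega_h(L)$: if the maximum at $x$ were $\leq -h\Hh_h-\delta$, then in any return path $(x_0,\dots,x_k)\in \mathcal P_k(x+s_k,x)$ the first step already costs at least $\delta$ more than the subaction bound, and after telescoping (with $u(x_0)=u(x_k)$ by periodicity) the action of the path is forced to be $\geq\delta$, contradicting the existence of return paths with action below $\delta/4$. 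Your route is shorter and avoids the compactness machinery of Proposition~\ref{peierl}; the paper's route yields the stronger conclusion that the inequality is saturated along a whole calibrated sequence from $x$, which it reuses elsewhere. A minor remark: the neighborhood $U$ in your argument is unnecessary, since $x_0=x+s_k$ projects to $x$ exactly and the pointwise strict inequality at $x$ (together with $\zn$-periodicity of $u$ and $L$) already gives the improved bound for the first step; the uniform-superlinearity/compactness discussion is likewise not needed once you drop $U$, since you only apply the pointwise bound at a specific $v=v_0$, not uniformly. These over-engineered steps do not introduce an error.
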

We will postpone the proof of the Lemma.

From now on we will suppose  $h=1$, and $\Hh:=\Hh_1$ (here we don't need the graph property).

Note that the definition of subaction
$$\max_v \left\{u(x+v)-u(x)-L(x,v)\right\}\leq -\Hh$$
is equivalent to
\begin{equation}\label{subacao}
u(x_k)-u(x_0)\leq A_{L-\Hh}(  x_0,....,  x_{k})
\quad \text{for any path}\  (x_0,...,x_k).
\end{equation}
 By this  characterization of the subactions, it is easy to see that
 $\bold{h}_x=\bold{h} (x,  \cdot)$ and $S_x=S (x,  \cdot)$ are subactions.

\begin{proposition} \label{peierl} If $x\in\Omega(L)$ there exists a sequence
  $(x_0,x_1,....,x_k,...)$ such that  $x_0=x$ and for all $k$
$$\bold{h}(x_k,x_0)\leq -A_{L-\Hh}(  x_0,....,  x_{k}).$$
\end{proposition}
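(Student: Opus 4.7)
The plan is to invoke Proposition \ref{calibrada} and construct the sequence by iterating the calibration identity. Concretely, set $x_0 := x$ and $u(\cdot) := \bold{h}(\cdot, x_0)$. Since $x_0 \in \Omega(L)$, Proposition \ref{calibrada} tells us that $u$ is a calibrated forward subaction, so for every $y \in \mathbb{T}^N$
\[
u(y) = \inf_{v \in \mathbb{R}^N} \bigl\{u(y + v) + L(y, v) - \Hh\bigr\},
\]
and the remark immediately after the definition of calibrated subaction guarantees that this infimum is attained at some $v(y)$. I would then set $v_k := v(x_k)$ and $x_{k+1} := x_k + v_k$ inductively, producing the candidate sequence $(x_k)_{k\geq 0}$ which by construction satisfies
\[
u(x_k) = u(x_{k+1}) + L(x_k, v_k) - \Hh \qquad \text{for every } k \geq 0.
\]

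The next step is to telescope: summing the above identity from $i = 0$ to $k-1$ gives
\[
u(x_0) - u(x_k) = \sum_{i=0}^{k-1}\bigl[L(x_i, v_i) - \Hh\bigr] = A_{L-\Hh}(x_0, \ldots, x_k).
\]
Because $x_0 \in \Omega(L)$, the characterization $\Omega(L) = \{y \in \mathbb{T}^N : \bold{h}(y,y) = S(y,y) = 0\}$ recorded just after the definition of the Peierls barrier yields $u(x_0) = \bold{h}(x_0, x_0) = 0$. Substituting, I obtain
\[
\bold{h}(x_k, x_0) = u(x_k) = -A_{L-\Hh}(x_0, \ldots, x_k),
\]
which is actually an equality and hence stronger than the requested $\leq$ inequality.

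The only (quite mild) point to verify is that the infimum in the calibration identity is genuinely attained at each step, so that the inductive selection of $v_k$ is well defined. This is the assertion made right after the definition of calibrated subaction and follows from the uniform superlinearity of $L$ together with continuity of $u$: outside a sufficiently large ball in $v$ the quantity $u(x_k + v) + L(x_k, v) - \Hh$ exceeds $u(x_k)$, so the infimum is realized on a compact set (this is the same mechanism underlying Lemma \ref{minimal}). Once this is in hand, the proof reduces to the telescoping sketched above and nothing further is required.
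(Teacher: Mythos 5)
Your proof is correct, and it takes a genuinely different route from the paper's. The paper proves Proposition \ref{peierl} ``from scratch'': it uses the definition of $\Omega(L)$ to obtain a sequence of almost-closed minimal $j_n$-paths starting at $x$, invokes Lemma \ref{minimal} to get a uniform velocity bound, passes to a subsequential limit (uniform on compacts) to produce the sequence $(x_0,x_1,\dots)$, and then compares actions of truncated paths directly to deduce $\bold{h}(x_k,x_0)\leq -A_{L-\Hh}(x_0,\dots,x_k)$. You instead leverage Proposition \ref{calibrada} as a black box: since $x_0\in\Omega(L)$, the function $u=\bold{h}(\cdot,x_0)$ is a calibrated forward subaction, so you can iterate the calibration identity to generate a forward orbit $(x_k)$, telescope, and use $\bold{h}(x_0,x_0)=0$ to land on the (strictly stronger) equality $\bold{h}(x_k,x_0)=-A_{L-\Hh}(x_0,\dots,x_k)$. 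There is no circularity: Proposition \ref{calibrada} and Lemma \ref{minimal} are established earlier and do not rely on Proposition \ref{peierl}. Your argument is shorter and more structural; the trade-off is that it imports the calibration machinery (which in the paper's own proof of Proposition \ref{calibrada} is established by essentially the same compactness-of-minimizers argument the paper runs here directly), whereas the paper's proof is self-contained within this subsection. One minor point worth noting explicitly: the orbit $(x_k)$ lives in $\rn$ after lifting the calibrating velocities, while $u$ and $\bold{h}$ are $\zn$-periodic, so the telescoping and the identification $u(x_k)=\bold{h}(x_k,x_0)$ are understood modulo $\zn$ in the first slot; the $\zn$-periodicity of $L$ in $x$ and of $u$ makes this harmless, but it is the same bookkeeping the paper handles via the translation remark preceding the definition of $\Omega_h(L)$.
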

\begin{proof}
Since  $x\in\Omega(L)$ there exists a sequence of
minimal paths  $\{(x^n_0,...,x^n_{j_n})\}_{n\in\Nn}$
such that $x^n_0=x$, $x^n_{j_n}=x+s_{j_n}$ and $j_n\to\infty$ satisfying
\begin{equation}\label{somatend0}
A_{L-\Hh}(  x_0^n,....,  x_{j_n}^n) \to 0.
\end{equation}
As  $|v^n_j|<K$
there exists a sequence
  $(x_0,...,x_k,...)$ which  is the limit of the paths above,
the convergence being uniform in each compact part.

Fixed $k\in\Nn$, for $j_n>k$ we have that
$$S^{j_n-k}(x_k,x_0)\leq L(x_k,x^n_{k+1}-x_k)-\Hh+A_{L-\Hh}(  x_{k+1}^n,....,  x_{j_n}^n) ,$$
and so
$$ S^{j_n-k}(x_k,x_0)-A_{L-\Hh}(  x_0^n,....,  x_{j_n}^n)
\leq  L(x_k,x^n_{k+1}-x_k)-\Hh-A_{L-\Hh}(  x_0^n,....,  x_{k+1}^n) .$$
Hence taking the $\displaystyle\lim_{n\to \infty}$ and using (\ref{somatend0}) we obtain

$$\bold{h}(x_k,x_0)\leq -A_{L-\Hh}(  x_0,....,  x_{k}).$$
\end{proof}

 \begin{proof}{(\textit{of Lemma \ref{igualdade}})}
It follows from (\ref{subacao}) that if $u$ is a subaction, then
$u(\bar y)-u( y)\leq\bold{h}( y,\bar y)$.

Let $x\in\Omega(L)$ and $(x_0,...,x_k,...)$ be the sequence given by
 proposition (\ref{peierl}).
If $u$ is a subaction, by Proposition (\ref{peierl}) we have
$$u(x_0)-u(x_k)\leq \bold{h}(x_k,x_0)\leq -A_{L-\Hh}(  x_0,....,  x_{k}).$$
The other inequality follows from (\ref{subacao}), hence
$$u(x_k)-u(x_0)=  A_{L-\Hh}(  x_0,....,  x_{k}),$$
in particular, for  $k=1$, this implies

$$\max_v[u(x+v)-u(x)-L(x,v)]= -\Hh.$$
\end{proof}

\begin{lemma}\label{sx} The function  $S_x(\cdot)=S(x,\cdot)$ is uniformly  Lipschitz in $x$.

\end{lemma}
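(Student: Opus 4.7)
My plan is to prove that $z\mapsto S(x,z)$ is Lipschitz with a constant $M$ depending only on $L$ (independent of $x$), via a one-step endpoint perturbation of a minimizing path. Fix $x,z_1,z_2\in\mathbb{T}^N$; lifting $z_1,z_2$ to $\mathbb{R}^N$ I may assume $|z_1-z_2|=d_{\mathbb{T}^N}(z_1,z_2)$, which is at most $\sqrt{N}/2$. By symmetry it suffices to bound $S(x,z_2)-S(x,z_1)$.

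First I would fix $\varepsilon>0$ and choose $k\geq 1$ together with a minimizing path $(x_0,\dots,x_k)\in\mathcal P_k(x+s,z_1)$ realizing $A_{L-\Hh}(x_0,\dots,x_k)=S^k(x,z_1)\leq S(x,z_1)+\varepsilon$; superlinearity of $L$ together with $\mathbb Z^N$-periodicity forces both the finite-dimensional infimum over interior points and the discrete infimum over $s\in\mathbb Z^N$ to be attained. The crucial input is Lemma \ref{minimal}, which provides a constant $K$ depending only on $L$ such that $|v_i|\leq K$ for every $0\leq i<k$ along the chosen minimizer.

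Next I would replace only the last point $x_k=z_1$ by the lift $\tilde z_2:=z_1+(z_2-z_1)$ of $z_2$. Absorbing the integer translation $\tilde z_2-z_2\in\mathbb Z^N$ into the $\inf_{s\in\mathbb Z^N}$ appearing in the definition of $S^k(x,z_2)$ (using $\mathbb Z^N$-periodicity of $L$) makes $(x_0,\dots,x_{k-1},\tilde z_2)$ a legitimate competitor for $S^k(x,z_2)$. Only the last term of the action changes, and the difference equals
\[
L(x_{k-1},v_{k-1}+(z_2-z_1))-L(x_{k-1},v_{k-1}),
\]
with $v_{k-1}=z_1-x_{k-1}$. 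Since $|v_{k-1}|\leq K$ and $|z_2-z_1|\leq\sqrt{N}/2$, both arguments lie in the fixed compact ball of radius $K+\sqrt{N}/2$, on which $L_v$ is bounded by some $M=M(L)$. The mean-value theorem then yields
\[
S^k(x,z_2)\leq S^k(x,z_1)+M\,|z_1-z_2|\leq S(x,z_1)+\varepsilon+M\,d_{\mathbb{T}^N}(z_1,z_2),
\]
and since $S(x,z_2)\leq S^k(x,z_2)$, sending $\varepsilon\downarrow 0$ and exchanging $z_1\leftrightarrow z_2$ produces the uniform Lipschitz estimate.

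The main obstacle is producing the uniform velocity bound $K$ along minimizers, independent of $k$ and of the endpoints — but this is precisely the content of Lemma \ref{minimal}, so no new work is required. Everything else amounts to the bookkeeping for the $\inf_{s\in\mathbb Z^N}$ and a first-order expansion of $L$ in $v$ on a fixed compact set.
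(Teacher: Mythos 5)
Your argument is essentially the paper's own proof: fix an (approximately) optimal $k$-path to $z_1$, invoke Lemma \ref{minimal} for the uniform velocity bound $K$, replace only the last endpoint by (a lift of) $z_2$, and bound the change in the final $L$-term by a bound on $L_v$ over a fixed compact set. You are slightly more careful than the paper about the attainment of the inner infimum needed to apply Lemma \ref{minimal} and about absorbing the integer translation into the $\inf_{s\in\mathbb Z^N}$, but the route and the key ingredients are identical.
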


\begin{proof} We fix $x\in\tn, \epsilon>0$ and  $y,z\in\tn$. By the definition of  $S$ there exists a path  $(x_0,...,x_k)\in\mathcal P_k(x+s,y), s\in \zn$ such that
$$|A_{L-\Hh}(  x_0,....,  x_{k}) |\leq S(x,y)+\epsilon,$$  we can construct the following path  $(\tilde x_0,...,\tilde x_k)=(x_0,...,x_{k-1},z)\in\mathcal P_k(x+s,z)$, the action of such path is given by  $$A_{L-\Hh}( \tilde x_0,....,  \tilde x_{k})=A_{L-\Hh}(  x_0,....,  x_{k})+L(x_{k-1},z-x_{k-1})-L(x_{k-1},y-x_{k-1}),$$
Note that $|y-x_{k-1}|\leq K$ and as $y,z\in\tn$, for any  $\theta\in(0,1)$, we have that  $|z-x_{k-1}+\theta(y-z)|<K_1$, for any  $x_{k-1}$, hence   $$|L(x_{k-1},z-x_{k-1})-L(x_{k-1},y-x_{k-1})|\leq\max_{(x,v)\in\tn\times K_1} |L_v(x,v)|\;|z-y|=C|z-y|$$
Then for all  $\epsilon>0$ we have that
$$S(x,z)\leq A_{L-\Hh}( \tilde x_0,....,  \tilde x_{k})\leq S(x,y)+\epsilon + C|z-y| $$ Which implies  $S(x,z)-S(x,y)\leq  C|z-y|$. Changing the roles of  $y$ and $z$ we get  $S(x,y)-S(x,z)\leq  C|z-y|$.

Therefore  $|S_x(y)-S_x(z)|\leq  C|z-y|$, note that the Lipschitz constant is independent of   $x$.\end{proof}

\begin{proof} (\textit{of theorem \ref{FS}}) Remember that the function  $S_x(\cdot)=S(x,\cdot)$ is a subaction.

By the definition of  $S$ we have that  $$S(x,x+v)\leq L(x,v)-\Hh \;\;\;\;\forall\;v$$

Fix $x\notin \Omega(L)$, then  $S(x,x)>0$. Hence
$$S_x(x+v)-S_x(x)<L(x,v)-\Hh   \;\;\;\;\forall \;v$$

As  $\Omega(L)$ is closed, for each $x\notin \Omega(L)$  we can
find a neighborhood   $V_x$ of $x$
such that for all $y\in V_x$

$$S_x(y+v)-S_x(y)<L(y,v)-\Hh  \;\;\;\;\forall \;v$$

We can extract from this family of neighborhoods  $\{V_x\}_{x\notin\Omega(L)}$, a countable subcover $\{V_{x_j}\}_{j=1}^{\infty}$. And we define
$$\tilde S_{x_j}(z)=S_{x_j}(z)-S_{x_j}(0),$$ as $S_{x_j}$ is uniformly  Lipschitz we obtain that  $|\tilde S_{x_j}(z)|\leq C |z|$, hence the series given by

$$u(z)=\sum_{j=1}^{\infty}\frac{\tilde S_{x_j}(z)}{2^j}$$ is well defined and uniformly convergent, as  $\tn$ is a compact set.
Finally we show that
  $u$ is a subaction:  $$u(x+v)-u(x)=\sum_{j=1}^{\infty}\frac{\tilde S_{x_j}(x+v)-\tilde S_{x_j}(x)}{2^j}=\sum_{j=1}^{\infty}\frac{ S_{x_j}(x+v)- S_{x_j}(x)}{2^j}\leq $$
  $$\leq\sum_{j=1}^{\infty} \frac{L(x,v)-\Hh}{2^j}=L(x,v)-\Hh.$$

  Hence by the theorem  (\ref{igualdade})

$$\max_v \left\{u(x+v)-u(x)-L(x,v)\right\}= -\Hh\;\; \mbox{\;\; if \;\;}\;\; x\in\Omega(L)$$
 and for  $x\notin \Omega(L)$, there exists  $k\geq 1$ such that $x\in V_{x_k}$ , hence
 $$S_{x_k}(x+v)-S_{x_k}(x)<L(x,v) -\Hh \;\;\;\;\;\forall\; v.$$

Therefore $$u(x+v)-u(x)=\left(\sum_{j\neq k}\frac{S_{x_j}(x+v)-S_{x_j}(x)}{2^j}+\frac{S_{x_k}(x+v)-S_{x_k}(x)}{2^k}\right)$$
$$<\left(\sum_{j\neq k}\frac{(L(x,v) -\Hh)}{2^j}+\frac{L(x,v) -\Hh}{2^k}\right)=L(x,v) -\Hh \;\;\;\forall \;v$$ i.e.,

$$\max_v \left\{u(x+v)-u(x)-L(x,v)\right\}< -\Hh \mbox{\;\;\;if\;\;\;} x\notin \Omega(L).$$\end{proof}

The present work is part of the PhD thesis of the last author in
"Programa de P\'os-Gradua\c c\~ao em Matem\'atica" - UFRGS (Brasil).

\end{document}